\newtheorem{thm}{Theorem}
\newtheorem{cnj}[thm]{Conjecture}
\newtheorem{cor}[thm]{Corollary}
\newtheorem{fct}[thm]{Fact}
\newtheorem{lem}[thm]{Lemma}
\def\a{{\alpha}}
\def\b{{\beta}}
\def\d{{\delta}}
\def\D{{\Delta}}
\def\e{{\epsilon}}
\def\ep{{e_\pi}}
\def\i{{\iota}}
\def\k{{\kappa}}
\def\l{{\lambda}}
\def\p{{\pi}}
\def\P{{\Pi}}
\def\r{{\rho}}
\def\s{{\sigma}}
\def\W{{\Omega}}
\def\cA{{\cal A}}
\def\cL{{\cal L}}
\def\cP{{\cal P}}
\def\cS{{\cal S}}
\def\sP{{\sf P}}
\def\zN{{\mathbb N}}
\def\mfs{{\mathfrak{s}}}
\def\dC{{\dot{C}}}
\def\dD{{\dot{D}}}
\def\dF{{\dot{F}}}
\def\ol{\overline{\ell}}
\def\cost{{\sf cost}}
\def\diam{{\sf diam}}
\def\dist{{\sf dist}}
\def\ecc{{\sf ecc}}
\def\len{{\sf length}}
\def\pot{{\sf pot}}
\def\supp{{\sf supp}}
\def\Clj{{C'_{L_j}}}
\def\Cljs{{C_{L_j}^*}}
\def\Crj{{C'_{R_j}}}
\def\Crjs{{C_{R_j}^*}}
\def\Dlj{{D_{L_j}}}
\def\Drj{{D_{R_j}}}
\def\ltnk{{l_t(n,k)}}
\def\Ltnk{{L_{t,n,k}}}
\def\ptnk{{p_t(n,k)}}
\def\Pnk{{P_n^{(k)}}}
\def\wtn{{w_t(n)}}
\def\Wtn{{W_{t,n}}}
\def\Aaij{{\cA_{\langle i,j\rangle}}}
\def\Fij{{F_{[i,j]}}}
\def\Faij{{F_{\langle i,j\rangle}}}
\def\Gij{{G_{[i,j]}}}
\def\mt{{\emptyset}}
\def\pf{{\hfill $\Box$\bigskip}}
\def\peb{{\mapsto}}
\def\rar{{\rightarrow}}
\def\sse{{\subseteq}}
\definecolor{brwn}{RGB}{140, 70, 20}
\definecolor{gren}{RGB}{  0,140, 10}
\newcommand{\la}[1]{\textcolor{blue}{\sf{#1}}}
\newcommand{\gh}[1]{\textcolor{brwn}{\sf{#1}}}
\newcommand{\god}[1]{\textcolor{red}{\sf{#1}}}
\pgfmathsetmacro\ang{30}
\pgfmathsetmacro\Ang{45}
\pgfmathsetmacro\ANg{60}
\pgfmathsetmacro\verts{12}
\pgfmathsetmacro\Verts{-1+\verts}
\pgfmathsetmacro\VErts{-2+\verts}
\pgfmathsetmacro\VERts{-3+\verts}
\pgfmathsetmacro\VERTs{-4+\verts}
\begin{document}

\title{Pebbling in powers of paths} 

\author{
Liliana Alc\' on \thanks{
CeMaLP, UNLP, CONICET, Argentina
}
\and
Glenn Hurlbert \thanks{
Department of Mathematics and Applied Mathematics,
Virginia Commonwealth University, USA
\newline
\emph{2000 AMS Subject Classification:} 05C40, 05C75, 05C87 and 05C99. 
\newline
\emph{Keywords:} graph pebbling, pebbling number, pebbling exponent, target conjecture.
}
}
\date{\today}
\maketitle

\centerline{\it Dedicated to the memory of Ron Graham.}

\begin{abstract}
The $t$-{\it fold pebbling number}, $\p_t(G)$, of a graph $G$ is defined to be the minimum number $m$ so that, from any given configuration of $m$ pebbles on the vertices of $G$, it is possible to place at least $t$ pebbles on any specified vertex via pebbling moves.
It has been conjectured that the pebbling numbers of pyramid-free chordal graphs can be calculated in polynomial time.

The $k^{\rm th}$ power $G^{(k)}$ of the graph $G$ is obtained from $G$ by adding an edge between any two vertices of distance at most $k$ from each other.
The $k^{\rm th}$ power of the path $P_n$ on $n$ vertices is an important class of pyramid-free chordal graphs, and is a stepping stone to the more general class of $k$-paths and the still more general class of interval graphs.
Pachter, Snevily, and Voxman (1995) calculated $\pi(P_n^{(2)})$, Kim (2004) calculated $\pi(P_n^{(3)})$, and Kim and Kim (2010) calculated $\pi(P_n^{(4)})$.
In this paper we calculate $\pi_t(P_n^{(k)})$ for all $n$, $k$, and $t$.

For a function $D:V(G)\rar \zN$, the $D$-{\it pebbling number}, $\p(G,D)$, of a graph $G$ is defined to be the minimum number $m$ so that, from any given configuration of $m$ pebbles on the vertices of $G$, it is possible to place at least $D(v)$ pebbles on each vertex $v$ via pebbling moves.
It has been conjectured that $\p(G,D)\le\p_{|D|}(G)$ for all $G$ and $D$.
We make the stronger conjecture that every $G$ and $D$ satisfies $\p(G,D)\le \p_{|D|}(G)-(s(D)-1)$, where $s(D)$ counts the number of vertices $v$ with $D(v)>0$.
We prove that trees and $\Pnk$, for all $n$ and $k$,
satisfy the stronger conjecture.

The {\it pebbling exponent} $e_\pi(G)$ of a graph $G$ was defined by Pachter, et al., to be the minimum $k$ for which $\pi(G^{(k)})=n(G^{(k)})$.
Of course, $e_\pi(G)\le \diam(G)$, and Czygrinow, Hurlbert, Kierstead, and Trotter (2002) proved that almost all graphs $G$ have $e_\pi(G)=1$.
Lourdusamy and Mathivanan (2015) proved several results on $\pi_t(C_n^2)$, and Hurlbert (2017) proved an asymptotically tight formula for $e_\pi(C_n)$.
Our formula for $\pi_t(P_n^{(k)})$ allows us to compute $e_\pi(P_n)$ within additively narrow bounds.
\end{abstract}

\begin{comment}
\noindent
\la{Liliana's comments and questions below are in blue.}\\
\gh{Glenn's comments and questions below are in brown.}\\
\god{God's comments and questions below are in red.}\\
\\
\gh{If your name is First Last then you can write your comments by typing}
\begin{verbatim} 
\fl{Blah, blah, blah.}
\end{verbatim} 
\end{comment}

%%%%%%%%%%%%%%%%%%%%%%%%%
%       INTRO
%%%%%%%%%%%%%%%%%%%%%%%%%
\section{Introduction}
\label{s:intro}

Graph pebbling has an interesting history, with many challenging open problems.
Calculating pebbling numbers of graphs is a well known computationally difficult problem (in $\P_2^{\sP}$-complete \cite{MilCla}).
See \cite{HurlGenGP,HurlHand} for more background.
It has been asked (e.g. \cite{HurlModMeth}) for what families of graphs $G$ the pebbling number $\pi(G)$ (defined below) can be calculated in polynomial time.
One possible family posited in \cite{AGHSplit} is that of chordal graphs, most likely with some restriction, such as bounded diameter or treewidth, for example.
This paper follows a sequence (\cite{AGHSplit,AGH2P,AGHSemi2T,AGHkConnU}) intended to provide at least a partial answer to this line of inquiry, which has led us to make the following conjecture.
We define the {\it pyramid} to be the triangulated 6-cycle $abcdef$ with interior triangle $bdf$, and say that a graph is $H$-{\it free} if it does not contain $H$ as an induced subgraph.

\begin{cnj}
\label{j:chordal}
If $G$ is a pyramid-free chordal graph then $\p(G)$ can be calculated in polynomial time.
\end{cnj}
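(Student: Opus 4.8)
The plan is to reduce the problem to the already-tractable case of trees by exploiting the fact that every chordal graph admits a clique tree, i.e.\ a tree decomposition whose bags are the maximal cliques and which satisfies the induced-subtree property. First I would recall that $\p(G)=\max_v\p(G,v)$, where the $v$-pebbling number $\p(G,v)$ is one more than the maximum size of a $v$-unsolvable configuration; thus it suffices to compute $\p(G,v)$ for each target $v$ in polynomial time. The natural approach is a dynamic program over the clique tree rooted at a bag containing $v$: at each clique $C$ one would maintain a compact summary of the ``best'' unsolvable configurations on the subtree below $C$, parametrized by how many pebbles can be pushed up into $C$ (equivalently, by the residual $\cost$ each vertex of $C$ can absorb toward $v$).

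The key difficulty is that cliques can be arbitrarily large, so treewidth is unbounded and a naive bag-by-bag dynamic program has exponential state. Here the pyramid-free hypothesis should be the decisive structural lever. The pyramid is precisely the smallest configuration of three pairwise-far branches meeting a central triangle, and its exclusion ought to force the shortest-path structure around any target to be essentially tree-like or path-like locally, so that the interaction between sibling subtrees hanging off a common clique is sharply limited. Concretely, I would try to prove a lemma asserting that in a pyramid-free chordal graph the distances from a target $v$ to the vertices of any maximal clique are laminar enough that the optimal unsolvable configuration decomposes additively over the clique tree, mirroring the maximum-path-partition formula for trees that already yields the polynomial-time result in that case.

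The main technical step, and the expected obstacle, is establishing such a decomposition together with a polynomial bound on the number of distinct dynamic-programming states. For trees the extremal unsolvable configurations are governed by a weight function supported on the unique path toward $v$, doubling along each edge away from $v$; the proof strategy is to show that on a pyramid-free chordal graph one can choose an analogous strategy supported on a polynomial-size family of shortest-path trees toward $v$, certify matching upper and lower bounds via the weight-function method, and solve the resulting optimization by the clique-tree dynamic program. The hardest point will be ruling out the exponentially many ways that pebbles resting on a single large clique could be recombined to cross into different subtrees; I expect pyramid-freeness to collapse these possibilities to a polynomially sized set, but proving this collapse cleanly --- rather than merely for the special cases $\Pnk$ settled in this paper --- is where the real work lies. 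Verifying the approach first on the intermediate sub-families of $k$-paths and then of interval graphs, before attempting arbitrary chordal graphs, would be the sensible order of attack.
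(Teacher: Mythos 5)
You should first note that the statement you were given is Conjecture \ref{j:chordal} of the paper: it is an open conjecture, not a theorem, and the paper contains no proof of it. What the paper actually proves is the special case $G=\Pnk$ (Theorems \ref{t:tree}--\ref{t:main}), continuing a sequence of earlier papers that settled split graphs, 2-paths, and semi-2-trees; the conjecture itself is offered as the motivating goal of that program. So there is no paper proof to compare your argument against, and any purported complete proof would be a new research result rather than a reconstruction.

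Judged on its own terms, your proposal is not a proof but a research plan, and you say so yourself: every load-bearing step is introduced with ``I would try to prove'' or ``I expect''. The gap is exactly the content of the conjecture. First, the lemma you want --- that pyramid-freeness forces the distances from a target to each maximal clique to be laminar enough that maximum unsolvable configurations decompose additively over the clique tree --- is not a reduction of the problem; it essentially \emph{is} the problem, and nothing in your sketch makes progress on it. Second, the weight-function method certifies upper bounds via tree strategies, but matching lower bounds of that form are not known for chordal graphs; even in the solved case $G=\Pnk$ the paper needs explicit extremal configurations ($W_{t,n}$ and $L_{t,n,k}$ in Lemma \ref{l:down_simpl}) together with the potential, cost, cutting, and chordal machinery of Section \ref{s:tech}, and an induction that carefully separates the $t$-wide and $t$-long regimes --- none of which resembles a weight-function certificate, which suggests the tree-like picture you hope for is too optimistic even in the simplest subfamily. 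Third, you correctly identify that the clique-tree dynamic program has exponentially many states a priori, but you supply no mechanism by which pyramid-freeness collapses them to polynomially many; that collapse is precisely the ``real work'' you defer. Your suggested order of attack (powers of paths, then $k$-paths, then interval graphs) does coincide with the paper's own stated roadmap, which is evidence the plan is sensible, but as a proof of Conjecture \ref{j:chordal} the proposal contains no mathematical content beyond the statement itself.
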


A {\it configuration} C of pebbles on the vertices of a connected graph G is a function $C : V(G)\rar\zN$ (the nonnegative integers), so that $C(v)$ counts the number of pebbles placed on the vertex $v$. 
A vertex $v$ is {\it empty} if $C(v)=0$ and {\it big} if $C(v)\ge 2$.
We write $|C|$ for the {\it size} $\sum_v C(v)$ of $C$; i.e. the number of pebbles in the configuration. 
A {\it pebbling step} from a big vertex $u$ to one of its neighbors $v$ (denoted $u\peb v$) reduces $C(u)$ by two and increases $C(v)$ by one. 
Given a specified {\it target} vertex $r$ we say that $C$ is $t$-{\it fold} $r$-{\it solvable} if some sequence of pebbling steps places $t$ pebbles on $r$. 
We are concerned with determining $\pi_t(G,r)$, the minimum positive integer  $m$ such that every configuration of size $m$ on the vertices of $G$ is $t$-fold $r$-solvable. 
The $t$-{\it pebbling number} of $G$ is defined to be $\pi_t(G) = \max_{r\in  V(G)} \pi_t(G,r)$.
We adopt the natural interpretation that $\p_0(G)=0$, and avoid writing $t$ when $t=1$.

The $k^{\rm th}$ power $G^{(k)}$ of the graph $G$ is obtained from $G$ by adding an edge between any two vertices of distance at most $k$ from each other.
The {\it pebbling exponent} $\ep(G)$ of a graph $G$ was defined in \cite{PaSnVo} to be the minimum $k$ for which $\pi(G^{(k)})=n(G^{(k)})$.
For example, {\it Class 0} graphs (graphs $G$ with $\p(G)=n(G)$) have pebbling exponent $\ep(G)=1$.
In a very strong probabilistic sense (see \cite{CHKT}) almost all graphs $G$ have $\ep(G)=1$.
Of course, $\ep(G)\le\diam(G)$, and the authors of \cite{PaSnVo} ask specifically about the cycle $C_n$ on $n$ vertices. 
%Lourdusamy and Mathivanan \cite{LourdMath} proved several results on $\p_t(C_n^2)$.
In \cite{HurlWFL} it was shown that $n/(2\lg n)\le \ep(C_n)\le n/[2(\lg n-\lg\lg n)]$, which determines its exact value for $n\le 9$, bounds it within a factor of two always, and a factor of one asymptotically.
Here we write $\lg$ for the base 2 logarithm. Lourdusamy and Mathivanan \cite{LourdMath} proved several results on $\p_t(C_n^2)$.

Denote the path on $n$ vertices by $P_n$.
Pachter, et al. \cite{PaSnVo} proved that $\p(P_n^{(2)})=2^{\lceil\frac{n-1}{2}\rceil}+((n-2)\bmod 2)$ for $n\ge 2$.
Kim \cite{Kim} proved that $\p(P_n^{(3)})=2^{\lceil\frac{n-1}{3}\rceil}+((n-2)\bmod 3)$ for $n\ge 8$ (and equals $n$ for $n\le 7$).
Kim and Kim \cite{KimKim} proved that $\p(P_n^{(4)})=2^{\lceil\frac{n-1}{4}\rceil}+((n-2)\bmod 4)$ for $n\ge 14$ (and equals $n$ for $n\le 13$).
We generalize these results in Theorem \ref{t:main}, below, calculating $\p_t(\Pnk)$ for all $k$ and all $t$.
This allows us to compute $e_\p(P_n)$ very tightly in Corollary \ref{c:expo}.

We generalize the traditional pebbling model as follows.
A {\it pebbling function} $F$ is any function $F:V\rar\zN$; its {\it size} is $|F|=\sum_{v\in V}F(v)$.
For a pebbling function $F$, define $\dF$ to be the multiset $\{v^{F(v)}\}_{v\in V}$
(the exponent $F(v)$ is the {\it multiplicity} of $v$).
Configurations $C$ (of pebbles) and distributions $D$ (of targets) are both pebbling functions.
However, we think of $\dC$ as a multiset of pebbles, labeled by their vertex locations, while we think of $\dD$ as a multiset of target vertices.
Furthermore, we think of $v_{i,j}$ as the label of $j^{\rm th}$ pebble (or target) sitting on vertex $v_i$.
For $m\in\zN$ and $x\in V$, define the function $mx$ by $mx(v)= m$ if $v=x$ and 0 otherwise.
Thus, the symbol $x$ can represent a vertex or a pebbling function, depending on its context.
In particular, if $F=mx$ then $\dF=\{x^m\}$.

For a configuration $C$ and distribution $D$, we say that $C$ is $D$-{\it solvable} (or that there is a $(C,D)$-{\it solution}, or that $G$ has a $(C,D)$-{\it solution}) if some sequence of pebbling steps places at least $D(v)$ pebbles on each vertex $v$.
The $D$-{\it pebbling number}, $\p(G,D)$, of a graph $G$ is defined to be the minimum number $m$ such that $G$ is $(C,D)$-solvable whenever $|C|\ge m$.
Thus we can write $\p_t(G,r)$ as $\p(G,tr)$ in this generalized notation.
(We note that the $D$-pebbling number was first introduced in \cite{CCFHPST} for the case $D=V(G)$, and was called the {\it cover pebbling number}.)

Just as the $t$-fold pebbling number can be used inductively to prove results about the pebbling number (e.g. \cite{Chung,AGHSemi2T}), the $D$-pebbling number can be used inductively to prove results about the $t$-fold pebbling number.
It is also thought that this might be a powerful tool in attacking Graham's Conjecture on the pebbling number of the cartesian product of graphs (see \cite{Chung}).
The following {\bf Weak Target Conjecture} was conjectured in \cite{HerHesHur}.

\begin{cnj}
\label{j:tTargetWeak}
\cite{HerHesHur}
Every graph $G$ satisfies $\p(G,D)\le \p_{|D|}(G)$ for every target distribution $D$.
\end{cnj}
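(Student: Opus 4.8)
The plan is to prove the sharper statement that, among all target distributions of a fixed size $t=|D|$, the ones concentrated on a single vertex are hardest to solve, so that $\p(G,D)\le\max_{r}\p(G,tr)=\p_t(G)$. Concretely, I would fix $m=\p_{|D|}(G)$, take an arbitrary configuration $C$ with $|C|\ge m$, and show $C$ is $D$-solvable. Since $m\ge\p_{|D|}(G,r)$ for every vertex $r$, the configuration $C$ is already $|D|$-fold $r$-solvable for each single $r$; the entire content is to upgrade this family of single-target solutions into one simultaneous $D$-solution.

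The main engine I would try is a demand-exchange (\emph{concentration is worst}) argument. Given $D$ with two distinct targets $u\ne v$ and $D(u),D(v)\ge1$, set $D'=D-1v+1u$, which is one step \emph{more concentrated}. The heart of the proof is the inequality $\p(G,D)\le\p(G,D')$ for a suitably chosen direction of concentration; iterating it collapses all of $\dD$ onto a single vertex $r$ and yields $\p(G,D)\le\p(G,tr)\le\p_t(G)$. Alternatively, I would run an induction on $t=|D|$: the base case $t=1$ is immediate since $\p(G,1r)=\p_1(G,r)\le\p_1(G)$; for the inductive step I would pick a target $r$ with $D(r)\ge1$, use part of $C$ to place one pebble on $r$ along a cheapest route, and apply the inductive hypothesis to the residual configuration against $D-1r$. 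Both routes rely on monotonicity of $\p(G,\cdot)$ together with an accounting lemma controlling how many pebbles a single unit of demand consumes.

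The hard part — and the reason the conjecture remains open in general — is precisely the interaction between simultaneous targets: pebbles routed to satisfy the demand at $u$ are then unavailable at $v$, so the cheapest joint solution need not decompose into independent single-target solutions, and there is no clean general recursion relating $\p_t(G)$ to $\p_{t-1}(G)$ that would make either the exchange step or the peeling step go through unconditionally. This is where structure must be injected. For trees I would replace the abstract exchange step by the explicit maximum-path-partition and weight-function formulas for $\p(T,D)$ and $\p_t(T)$ and compare them directly; the tree case of a stacking-type theorem (in the spirit of the cover-pebbling work of \cite{CCFHPST}) supplies exactly the control on pebble consumption that the general accounting lemma lacks. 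For $\Pnk$ I expect the linear, bounded-diameter structure to let one localize each target's demand to a short window of the path and then sum windowed costs, reducing the simultaneous problem to an essentially one-dimensional optimization; carrying this out, together with the $\bmod\,k$ boundary corrections already visible in the known formulas for $P_n^{(2)},P_n^{(3)},P_n^{(4)}$, is where the bulk of the calculation lives, and it is also what should sharpen the bare inequality into the stronger $-(s(D)-1)$ savings.
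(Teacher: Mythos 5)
First, note the status of what you are trying to prove: this statement is the Weak Target Conjecture of \cite{HerHesHur}, and the paper does not prove it in general --- no one has. What the paper actually does is prove the \emph{stronger} form $\p(G,D)\le\p_{|D|}(G)-(s(D)-1)$ for two families: trees (Theorem \ref{t:tree}) and $\Pnk$ (Theorem \ref{t:Dsolve}). You correctly recognize this, and your second strategy --- induction on $t$, peeling off one target solved along a cheapest route and recursing on the residual configuration --- is in fact exactly the engine the paper uses where it succeeds. In the proof of Theorem \ref{t:tree}, a target $r$ is solved at cost at most $2^{\ecc_T(r)}$, and the recursion closes because of the accounting inequality $\p_t(T)\ge\p_{t-1}(T)+2$ together with the extra slack $s(D)-1$; in the long case of Theorem \ref{t:Dsolve} the same peeling is run through breadth-first-search spanning trees via the Cheap Lemma \ref{l:CheapLemma}. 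So the ``accounting lemma controlling how many pebbles a single unit of demand consumes'' that you ask for is precisely the cost-bound $2^{\ecc}$ plus the per-target discount built into the Strong Target Conjecture; notably, the paper needs the stronger statement to make the induction close, which is a point your proposal gestures at but does not pin down.

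Two branches of your plan, however, have concrete gaps. The exchange step $\p(G,D)\le\p(G,D')$, where $D'$ concentrates demand, is not an available lemma: monotonicity under concentration is itself an open strengthening of the conjecture (iterating it would immediately give $\p(G,D)\le\p(G,tr)\le\p_t(G)$), so using it as the ``main engine'' is circular --- you would be assuming a statement at least as strong as the one to be proved. Second, your tree plan proposes comparing an ``explicit maximum-path-partition formula for $\p(T,D)$'' against Chung's formula for $\p_t(T)$; no such formula for $\p(T,D)$ is known. The paper explicitly lists finding one as an open problem (item 3 of Section \ref{s:comments}), and its tree proof is structured precisely to avoid needing it, replacing formula comparison with induction on $t$, $s(D)$, and $|V(T)|$. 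Similarly, for $\Pnk$ the paper's argument is far from a ``one-dimensional windowed optimization'': it requires the wide/long dichotomy, the cutting and chordal lemmas, and a delicate interplay between induction on $n$, $k$, and $t$, so the bulk of the difficulty you defer to ``calculation'' is where the actual new ideas of the paper live.
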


The authors of \cite{HerHesHur} verified this conjecture for trees, cycles, complete graphs, and cubes, and the authors of \cite{HurlSedd} verified this conjecture for 2-paths and Kneser graphs $K(m,2)$ with $m\ge 5$.

Define $\supp(D)$ to be the set of vertices $v$ with $D(v)>0$, and denote $s(D)=|\supp(D)|$.
We make the following {\bf Strong Target Conjecture}.

\begin{cnj}
\label{j:tTargetStrong}
Every graph $G$ satisfies $\p(G,D)\le \p_{|D|}(G)-(s(D)-1)$ for every target distribution $D$.
\end{cnj}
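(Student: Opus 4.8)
The plan is to recast the conjecture extremally and then reduce a spread-out distribution to a single-target problem, charging one pebble of slack to each target site beyond the first. Write $t=|D|$ and $\supp(D)=\{r_1,\dots,r_s\}$ with $s=s(D)$. The bound $\p(G,D)\le\p_t(G)-(s-1)$ is equivalent to the assertion that every $D$-unsolvable configuration $C$ satisfies $|C|\le\p_t(G)-s$, since $\p(G,D)-1$ is the largest size of a $D$-unsolvable configuration and $\p_t(G)-1$ is the largest size of a configuration that is not $t$-fold solvable to some single vertex. It therefore suffices to manufacture, from any $D$-unsolvable $C$, a configuration $C^+$ of size $|C|+(s-1)$ together with a single target $r$ for which $C^+$ is not $t$-fold $r$-solvable; any such witness has size at most $\p_t(G)-1$, which gives $|C|\le\p_t(G)-s$ immediately.

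For the certificate I would collapse all of the demand onto $r:=r_1$ and seed one pebble at each of the remaining targets, setting $C^+=C+\sum_{i=2}^s r_i$ so that $|C^+|=|C|+(s-1)$. The heart of the argument is an \emph{interception lemma}: if $C^+$ were $t$-fold $r_1$-solvable, then $C$ would be $D$-solvable, contradicting the choice of $C$. The mechanism is to take a solution delivering $t$ pebbles to $r_1$ from $C^+$, process its moves along a fixed routing backbone, and at each $r_i$ ($i\ge2$) \emph{intercept} enough of the pebbles passing through to meet $D(r_i)$, leaving them in place rather than forwarding them toward $r_1$. The pebble seeded at $r_i$ pre-pays one unit of that demand and offsets the single forwarding move lost to interception; a count of the pebbles that still arrive at $r_1$ then shows at least $D(r_1)$ remain, so every demand of $D$ is met and $C$ is $D$-solvable after all.

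Equivalently, this engine telescopes as an induction on $s(D)$. The base case $s=1$ is immediate, since $\p(G,tr)=\p_t(G,r)\le\p_t(G)$ by definition, and the correction term $s-1$ vanishes. The inductive step rests on a single-pebble lemma stating that eliminating one target site saves at least one pebble: if $D'$ is obtained from $D$ by relocating all of the demand at $r_s$ onto $r_1$ (so $|D'|=t$ and $s(D')=s-1$), then $\p(G,D)\le\p(G,D')-1$, whereupon $\p(G,D')\le\p_t(G)-(s-2)$ from the inductive hypothesis yields the claim. This per-step lemma is the interception statement in miniature, so the two formulations stand or fall together.

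The main obstacle is the interception lemma in full generality. It is valid precisely when the pebbles delivered to $r_1$ can be assumed to travel along routes on which the other targets lie predictably and at which diverting a pebble costs exactly one move. Trees supply this automatically through their unique paths, and the interval geometry of $\Pnk$ keeps the routes controllable, which is exactly why those classes succumb to the method and appear in the theorems of this paper. A general graph offers no canonical routing: a $t$-fold solution to $r_1$ may steer its pebbles entirely around $r_2,\dots,r_s$, so the seeded pebbles cannot be guaranteed to offset any interception, and the multi-target-to-single-target reduction loses its footing. Proving the interception lemma without a routing backbone — or replacing it with a global flow or weight-function certificate that survives the absence of one — is the crux, and is where I expect a general proof would have to do its real work rather than in any of the bookkeeping above.
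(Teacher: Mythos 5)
You are attempting Conjecture \ref{j:tTargetStrong}, which the paper leaves open in general and proves only for trees (Theorem \ref{t:tree}) and for $\Pnk$ (Theorem \ref{t:Dsolve}); so there is no general proof to match, and your proposal, as you concede, does not supply one. But the gap is worse than the missing ``routing backbone'': your interception lemma is \emph{false} as stated, even on trees, even for extremal configurations. Take $G=P_3$ with vertices $x,r_1,r_2$ in path order, $D=r_1+r_2$ (so $t=2$, $s=2$), and $C=5x$. Then $C$ is $D$-unsolvable (delivering one pebble to $r_2$ costs four pebbles from $x$ and leaves nothing for $r_1$; the stacked weight needed is $2+4=6>5$), and in fact $C$ is a maximum $D$-unsolvable configuration, so $\p(G,D)=6$. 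Yet $C^+=C+r_2$ is $2$-fold $r_1$-solvable outright: two moves $x\peb r_1$ place two pebbles on $r_1$, never touching the seed. The collapsed single-target instance can be strictly \emph{cheaper} than the spread-out one, because concentrating all demand at $r_1$ may park it next to the pebbles while $D$ assigns a unit to an expensive vertex; the seeded pebble at $r_i$ cannot ``pre-pay'' anything, since the final claim must be about $C$, in which that pebble does not exist. The same example kills your single-pebble relocation lemma: with $r_1$ the center, $D'=2r_1$ has $\p(P_3,D')=\p_2(P_3,r_1)=5$ (by Chung's formula, Theorem \ref{t:TreeFormula}), so the claimed $\p(G,D)\le\p(G,D')-1=4$ fails against the true value $6$. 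Collapsing onto the far leaf instead would work here, which shows the lemma's truth depends on an extremal choice of collapse target that your proposal (with its arbitrary indexing of $\supp(D)$) never makes --- and proving the comparison for the right choice is essentially the conjecture itself, not a step toward it.

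It is worth noting how the paper's proofs for the two settled classes actually run, since they avoid witness surgery entirely. Both arguments go by induction on $t$ with \emph{cost accounting}: one shows that any configuration of size at least $\p_t(G)-s(D)+1$ solves some target $v\in\dD$ at cost at most $2^{\ecc(v)}$ (the ``furthermore'' clause of Theorem \ref{t:tree}; the strictly $t$-long case of Theorem \ref{t:DsolveStrong}), deletes the at most $2^d$ pebbles consumed, and recurses on $D-v$, using $\p_t(T)\ge\p_{t-1}(T)+2$ (from Chung's formula) or the analogous arithmetic for $\ptnk$, with the support statistic entering only through $s(D-v)\ge s(D)-1$. The ``one pebble saved per extra support vertex'' is anchored from below by the explicit configuration $W_D$, not certified from above by modifying an unsolvable configuration. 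Your extremal reformulation in the first paragraph (any $D$-unsolvable $C$ has $|C|\le\p_t(G)-s$) is correct and is the natural frame, but the certificate $C^+$ you build for it does not witness unsolvability, so the reduction collapses at its heart rather than at the generalization step you flagged.
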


We prove that trees satisfy this stronger conjecture in Theorem \ref{t:tree}.
We prove in Theorem \ref{t:Dsolve} that $\Pnk$ satisfies this stronger conjecture for all $n$ and $k$.

Another reason to study pebbling on powers of paths is the following.
Czygrinow, et al. \cite{CHKT}, proved that, for each $d\ge 1$, there is 
a least positive integer $k(d)$ such that if $G$ has diameter $d$ and
connectivity at least $k(d)$ then $G$ is Class 0.
They showed that $k(d)\le 2^{2d+3}$ and $k(d)\in\W(2^d/d)$.
We note that $\Pnk$ has connectivity equal to $k$ and that Theorem \ref{t:main} shows that $\p(\Pnk)=n$ when $k\ge (2^d-2)/(d-1)$.
Thus $\Pnk$ witnesses the tightness of the lower bound on $k(d)$; we believe that the upper bound on $k(d)$ is weak and should be improved.
Furthermore, at the other extreme when $t$ is large, a theorem of \cite{HMOZ} states that every graph $G$ satisfies $\lim_{t\rar\infty}\p_t(G)/t=2^d$, where $d=\diam(G)$.
Our Theorem \ref{t:main} formula for $t\ge k(d-1)/(2^d-2)$ is more precise, that $\p_t(\Pnk)=t2^d+((n-2)\bmod{k})$.

The final and, for our current purposes, most important motivation for investigating $\p_t(\Pnk)$ for $2\le k\le\diam(\Pnk)=\lfloor(n-2)/k \rfloor+1$ is that $\Pnk$ is chordal.
This case is a key stepping stone toward the graph classes $k$-paths and interval graphs.
At each stage of the sequence of papers \cite{AGHSplit,AGH2P,AGHSemi2T,AGHkConnU} mentioned above we have discovered new hurdles that have required new techniques which have expanded our understanding of pebbling in chordal graphs, such as the technical lemmas found in Section \ref{s:tech}.
The critical pieces of the puzzle in this paper are the new chordal lemmas found in Section \ref{ss:chordal}, as well as the careful interplay between the $t$-wide and $t$-long cases in the inductive proof of Section \ref{s:proof_D_solve}.
In particular, Conjecture \ref{j:tTargetStrong} plays a crucial role and may be the most important contribution of this work, as a powerful technique in future research.

We describe our results in the next section, introduce the important machinery in Section \ref{s:tech}, and prove Theorems \ref{t:tree}--\ref{t:main} in order in Sections \ref{s:trees}--\ref{s:thm}.

%%%%%%%%%%%%%%%%%%%%%%%%%
%       THEOREMS
%%%%%%%%%%%%%%%%%%%%%%%%%
\section{Main Theorems} 
\label{s:def}

For  positive integers $n$ and $k$, the {\it path power} $P^{(k)}_n$ is the graph  with vertex set $V=\{v_1,...v_{n}\}$, and $v_i\sim v_j$ whenever $1\le |i-j|\le k$.
The diameter $d$ of $P_n^{(k)}$ is completely determined by $n$ and $k$; in fact, \[d=\lfloor(n-2)/k \rfloor+1.\]
We let $b=(n-2)-k(d-1)$, then  $0\leq b <k$. 
For $n\geq 2$ and $t\geq 1$, define the following functions, where the above formula for $d$ is assumed:
\begin{center}
\begin{tabular}{rcl}
     $\ltnk$ & $=$ & $t2^d+n-2-k(d-1)=t2^d+b$,  \\
     $\wtn$&$ =$&$2t+n-2$,\\
    $p_t(n,k)$ & $=$  & $ \max\{\ltnk,\wtn\}$.
\end{tabular}
\end{center}

We have $\wtn\ge \ltnk$ if and only if 
%$t\le \left(\frac{d-1}{2^d-2}\right)k$; 
$t(2^d-2)\le (d-1)k$; 
in other words   \[p_t(n,k)= \left\{
  \begin{array}{ll}
     2t+n-2 & \hbox{if } t\leq t_0, \hbox{ and}\\
    t2^d+b  & \hbox{if } t\geq t_0,
  \end{array}
\right.\]
where $t_0=1$ if $d=1$, and $t_0=t_0(k,d)=k(d-1)/(2^d-2)$ otherwise.
We say that $\Pnk$ is $t$-{\it wide} when $t\leq t_0$ and $t$-{\it long} when $t\geq t_0$.
More finely, we say that $\Pnk$ is {\it barely} $t$-long when $t_0<t=\lceil t_0\rceil$, and {\it strictly} $t$-long if $t>\lceil t_0\rceil$.
Thus, the formula names $w_t$ and $l_t$ match the $t$-wide and $t$-long terminology.
We will occasionally make use of the fact that, for $d\ge 2$ and fixed $k$, $t_0(k,d)$ is a strictly decreasing function of $d$.

For $k\geq n-1$, $P_n^{(k)}$ is a complete graph, and it is $t$-wide for any $t\geq 1$. Therefore, $\pi_t(\Pnk,r)= \pi_t(\Pnk)=2t+n-2=\wtn=p_t(n,k)$ for any vertex $r$.  
Notice that in this case all vertices are  {\it simplicial} (i.e. their neighborhoods induce complete graphs).
For $1\leq k < n-1$, the vertices $v_1$ and $v_n$ are the only two  simplicial vertices of $P^{(k)}_n$; the non simplicial vertices will be called {\it interior} vertices.  Note that the connectivity of $P_n^{(k)}$ equals $k$.
Moreover, when $1 \leq (n-1)/2 \le k$, $\Pnk$ contains a universal vertex (adjacent to every other vertex); then the present work extends that of \cite{AGHkConnU}, which computes the pebbling numbers of $k$-connected graphs with universal vertices.
For practical purposes, when $n=1$,  we define  $p_t(1,k)=t-1$ for any $k$.

The next three theorems will be proved in Sections \ref{s:trees}--\ref{s:thm}.

The following theorem verifies the Strong Target Conjecture %\ref{j:tTargetStrong} 
for trees. The cost of a solution refers to the total number of pebbles that are lost when performing the steps of the solution, the formal definition is given in the next section.

\begin{thm}
\label{t:tree}
Let $T$ be a tree of diameter $d$ and $D$ be a target distribution of size $t$.
Then $\p(T,D)\le\p_t(T)-s(D)+1$.
Furthermore, if $C$ is a configuration on $T$ of size $|C|\ge \p_t(T)-s(D)+1$, then $C$ solves any target $v\in\dD$ with cost at most $2^{\ecc_T(v)}$.
\end{thm}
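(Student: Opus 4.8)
The plan is to prove both assertions simultaneously by induction on $t=|D|$, using a single auxiliary \emph{cost lemma} as the engine. The lemma I would isolate first concerns a single target: if a configuration $C$ on a tree $T$ is $v$-solvable, then it is $v$-solvable with $\cost$ at most $2^{\ecc_T(v)}-1$. To prove it I would root $T$ at $v$ and invoke the standard fact that an optimal single-target solution only moves pebbles toward $v$; under such moves the distance potential $\phi=\sum_w C(w)2^{-\dist(w,v)}$ is invariant, since a step from a vertex at distance $\delta$ to its parent changes $\phi$ by $-2\cdot2^{-\delta}+2^{-(\delta-1)}=0$. Hence the pebbles that feed the deposited pebble carry total potential exactly $1$, and since each sits at distance at most $\ecc_T(v)$ it has potential at least $2^{-\ecc_T(v)}$; therefore at most $2^{\ecc_T(v)}$ pebbles are consumed and the $\cost$, being one less than the number consumed, is at most $2^{\ecc_T(v)}-1$. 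Note this is slightly stronger than the stated bound $2^{\ecc_T(v)}$, and the extra unit is exactly what will matter.

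Second, I would record two tree facts that drive the bookkeeping: $\p_t(T)-\p_{t-1}(T)=2^{\diam(T)}$ for $t\ge1$, and $\ecc_T(v)\le\diam(T)$ for every $v$. Combined with $s(D)\le t$ these give $\p_t(T)-s(D)+1\ge\p(T)$, so whenever $|C|\ge\p_t(T)-s(D)+1$ every target vertex is individually $1$-solvable and the cost lemma applies to each of them.

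For the inductive step $(t\ge1)$, given $C$ with $|C|\ge\p_t(T)-s(D)+1$, I would pick any $v\in\supp(D)$, solve one pebble on it at $\cost\le2^{\ecc_T(v)}-1$, let $C'$ be the leftover configuration, and set $D'=D-v$ of size $t-1$. The crucial check is $|C'|\ge\p_{t-1}(T)-s(D')+1$, which reduces to $2^{\diam(T)}-2^{\ecc_T(v)}+1\ge s(D)-s(D')$. When $D(v)\ge2$ we have $s(D')=s(D)$ and the right side is $0$, while when $D(v)=1$ we have $s(D')=s(D)-1$ and the right side is $1$; both inequalities hold because $\ecc_T(v)\le\diam(T)$. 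The inductive hypothesis then solves $D'$ from $C'$, and prepending the cheap move on $v$ solves $D$ while certifying the $\cost\le2^{\ecc_T(v)}$ guarantee for $v$; running over all targets yields the furthermore clause.

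The main obstacle is the cost lemma, and specifically securing the bound $2^{\ecc_T(v)}-1$ rather than $2^{\ecc_T(v)}$: it is precisely this unit of slack, saved once per target, that upgrades the weak bound $\p(T,D)\le\p_{|D|}(T)$ to the strong bound carrying the $-(s(D)-1)$ correction. I would therefore take care that the potential/invariance argument genuinely delivers the $-1$ (checking the degenerate case $\ecc_T(v)=0$ and confirming no integrality issue erodes it), and I would verify the identity $\p_t(T)-\p_{t-1}(T)=2^{\diam(T)}$ through the maximum path partition, so that $2^{\diam(T)}\ge2^{\ecc_T(v)}$ is always available when the budget inequality is closed.
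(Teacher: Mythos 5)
Your overall induction scheme mirrors the paper's, but it rests on a false lemma, and the failure occurs in the one place where you actually need the extra strength. With the paper's definition, the cost of a solution is $c+1$ where $c$ is the number of pebbling steps; for a minimal solution that funnels pebbles into $v$, this equals the total number of pebbles consumed, not one less than that number: a funnel using $p$ pebbles of $C$ and leaving a single pebble on $v$ performs exactly $p-1$ steps, so its cost is $p$. Your potential argument therefore yields $\cost(\s)\le 2^{\ecc_T(v)}$ (precisely the paper's bound), not $2^{\ecc_T(v)}-1$. The stronger claim is genuinely false: on $T=P_2$ with target $v$ and two pebbles on the other vertex, the unique solution has cost $2=2^{\ecc_T(v)}$, and on $P_3$ with four pebbles at distance two from $v$ every solution has cost $4=2^{\ecc_T(v)}$. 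So the ``extra unit of slack'' you plan to bank once per target does not exist.

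With the corrected bound $2^{\ecc_T(v)}$, your budget inequality $\p_t(T)-\p_{t-1}(T)-\cost(\s)\ge s(D)-s(D')$ becomes $2^d-2^{\ecc_T(v)}\ge 1$ when $D(v)=1$, which fails exactly when $v$ is a singleton target of eccentricity $d$. This is the hard case of the theorem, and it is where the paper does something your proposal cannot replicate: after reducing (as you also could) to the situation where $\dC\cap\dD=\mt$ and every target is a singleton leaf of eccentricity $d$, the paper picks two \emph{distinct} targets $r,v\in\dD$ (possible since the $s(D)=1$ case is trivial), applies induction on $t$ to $D-v$ to obtain an $r$-solution of cost at most $2^d$, and then recovers the missing pebble not from a cheaper solution but from the tree itself, using $\p_{t-1}(T)\ge\p_{t-1}(T-r)+1$ for the leaf $r$: the leftover configuration lives on $T-r$ and meets the inductive bound for $D-r$ on that smaller tree, so the recursion runs on $|V(T)|$ as well as on $t$. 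Your proposal carries no induction on the tree and no second-target device, so it has no mechanism to close this case; as written, the argument is incomplete at its crux.
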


The following theorem verifies the Strong Target Conjecture %\ref{j:tTargetStrong} 
for powers of paths.

\begin{thm}
\label{t:Dsolve}
Let $D$ be a $t$-multiset of target vertices of $\Pnk$.
Then $\p(\Pnk,D)\le\ptnk-s(D)+1$.
\end{thm}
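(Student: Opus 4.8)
The plan is to prove the bound by a peeling induction that reduces everything to a single-target \emph{cost} estimate, exactly of the flavor supplied for trees by Theorem~\ref{t:tree}. Note first that the case $k=1$ is already settled, since $P_n^{(1)}=P_n$ is a tree and Theorem~\ref{t:tree} applies verbatim; so I may assume $k\ge 2$. The engine I would isolate is the following single-pebble statement: from a configuration $C$ on $\Pnk$ that is large enough, one can place one more pebble on a chosen support vertex $r$ at $\cost$ at most $2^{\ecc(r)}\le 2^d$ in general, and strictly cheaper, at $\cost$ at most $2^d-1$, when that pebble is the last demanded at $r$. (In the $t$-wide regime the corresponding budgets are $2$ and $1$.) The strict saving on the final pebble at each support vertex is precisely what converts the weak bound $\p(\Pnk,D)\le\ptnk$ into the strong bound with the extra $-(s(D)-1)$.

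For the inductive step I would induct on $t=|D|$, carrying $s=s(D)$ along. Given $C$ with $|C|\ge\ptnk-s+1$, choose a vertex $r\in\supp(D)$ and use the engine to place one pebble on $r$ at $\cost\ c$, leaving $C'$ with $|C'|=|C|-c$ to solve $D'=D-r$. If $D(r)\ge 2$ then $s(D')=s$ and the induction hypothesis needs $|C'|\ge p_{t-1}(n,k)-s+1$, which holds as soon as $c\le p_t(n,k)-p_{t-1}(n,k)$; if $D(r)=1$ then $s(D')=s-1$ and we instead need $c\le p_t(n,k)-p_{t-1}(n,k)-1$. In the $t$-long regime these read $c\le 2^d$ and $c\le 2^d-1$, matching the two engine budgets exactly; in the $t$-wide regime they read $c\le 2$ and $c\le 1$. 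The only genuinely delicate comparisons are the \emph{transitional} ones, where deleting the pebble flips $\Pnk$ from $t$-long to $(t-1)$-wide (so that $p_t=\ltnk$ but $p_{t-1}=\wtn$); there I would verify the required cost inequality directly from the explicit values of $\wtn$ and $\ltnk$ together with the monotonicity of $t_0(k,d)$ in $d$.

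To prove the single-target engine I would induct on $n$ with $k$ fixed, exploiting the clique-separator structure of $\Pnk$: any window $\{v_{i+1},\dots,v_{i+k}\}$ of $k$ consecutive vertices is a clique whose removal disconnects the two sides of the path. After normalizing $C$ and locating $r$, I would split at a well-chosen window, push the pebbles on the far side across the separator — each crossing paying a factor of two but shortening the residual diameter — and thereby reduce to a strictly shorter power of a path. Since deleting a block of $k$ vertices lowers $d$ by one and hence changes $t_0=t_0(k,d)$, the wide/long status of the reduced instance must be recomputed at each step, and this is where the careful interplay between the two regimes is forced. The chordal lemmas of Section~\ref{ss:chordal} are what make the crossing efficient: they let a whole batch of pebbles move through the clique separator at the amortized rate needed to meet the $2^d-1$ (or wide-case) budget, rather than routing each pebble independently.

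The step I expect to be the main obstacle is the $t$-wide regime of the engine, together with the mixed wide/long transitions in the peeling. When $\Pnk$ is $t$-wide the target may sit at distance two from most of its pebbles, so the naive per-pebble cost $2^2-1=3$ already exceeds the budget, and the required saving cannot come from any single source pile; it must be extracted by aggregating many small contributions across a clique, which is exactly where the configuration's large size $\wtn=2t+n-2$ and the chordal lemmas become indispensable. Pinning down an amortized cost bound tight enough to yield the strict $-1$ on the last pebble at each support vertex — uniformly across the wide, long, and transitional regimes — is the crux of the argument; once that $\cost$ accounting is established, the two inductions (on $t$ with $s$ tracked, and on $n$) combine routinely.
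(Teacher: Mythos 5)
Your strictly $t$-long mechanism is essentially the one the paper uses in that regime: when $t-1\ge\lceil t_0\rceil$ one has $p_t(n,k)-p_{t-1}(n,k)=2^d$ exactly, and the paper peels off one target at cost at most $2^{\ecc}$ (obtained from Theorem \ref{t:tree} applied to breadth-first-search spanning caterpillars). But your proposal has a genuine gap precisely at the point you flag as ``the main obstacle'': in the $t$-wide regime the single-target engine you postulate is false, not merely hard to prove. Take $d=2$ and a configuration with every pebble stacked on $v_n$ and target $v_1$; any solution placing one pebble on $v_1$ costs $4=2^d$, while your budget is $2$ (or $1$ for the last pebble at a support vertex). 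No amortization can rescue a peeling that removes one target at a time: after paying cost $4$ for that target, the remaining configuration has strictly fewer than $p_{t-1}(n,k)-s(D-r)+1$ pebbles, so the induction hypothesis is simply unavailable. The same failure occurs in the barely $t$-long (intermediate) case, where $p_t(n,k)-p_{t-1}(n,k)=t(2^d-2)-k(d-1)+2$ can be strictly smaller than $2^d$ (e.g.\ $k=100$, $d=3$, $t=34$ gives increment $6<8$). This is exactly why the paper abandons sequential peeling outside the strictly long case and instead uses global, genuinely multi-target arguments: in the wide case, potential counting (Lemma \ref{l:PotLem}) combined with Menger-type disjoint paths (Corollary \ref{c:menger}) for $d=2$, and for $d\ge3$ a proof that maximum $D$-unsolvable configurations are $D$-small via canonical configurations and pebbling arrangements; in the intermediate case, a scanning argument with the indicator $\D(j)$ that locates a split point of the arrangement at which both halves are wide and solvable. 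Your proposed fix --- ``aggregating many small contributions across a clique'' using the chordal lemmas --- is a hope, not an argument: Lemmas \ref{l:chordal}, \ref{l:Greedy} and \ref{l:C'} concern semi-greediness and relocation of potential moves, and yield no amortized cost bound of the kind you need.

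A second, smaller omission lies in the regime where your engine does work. Even in the strictly long case the cost accounting only disposes of targets $x$ with $D(x)\ge 2$ or $\ecc(x)<d$; when every target is a singleton of full eccentricity $d$, the paper needs a separate argument (reducing to the single big vertex $v_n$, deleting the rails $R_i$ to pass to $P_{n'}^{(k')}$ by induction on $n$, and rerouting the final steps through the clique $G_{[1,k+1]}$). Your proposal has no counterpart for this case, and it is not covered by the cost estimate, since there the inequality $|C-C[\s_x]|\ge\p(G,D-x)$ cannot be closed without the extra savings that the paper extracts from $\ecc(x)<d$ or $D(x)\ge 2$.
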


Analogous to Theorem \ref{t:tree}, as part of the proof of Theorem \ref{t:Dsolve}, we show in the $t$-long case that if $C$ is a configuration on $G=\Pnk$ of size $|C|\ge p_t(n,k)-s(D)+1$ then $C$ solves any target $v\in\dD$ with cost at most $2^{\ecc_G(v)}$.

\begin{thm}
\label{t:main}
$\pi_t(P^{(k)}_n)=\ptnk$.
\end{thm}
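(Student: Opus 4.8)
The plan is to prove Theorem \ref{t:main} by establishing the two matching inequalities $\p_t(\Pnk)\ge\ptnk$ and $\p_t(\Pnk)\le\ptnk$ separately, since the theorem asserts equality.

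For the lower bound $\p_t(\Pnk)\ge\ptnk$, I would exhibit, for a suitable target $r$, an unsolvable configuration $C$ of size $\ptnk-1$; this forces $\p_t(\Pnk,r)\ge\ptnk$ and hence $\p_t(\Pnk)\ge\ptnk$. Because $\ptnk=\max\{\ltnk,\wtn\}$, I expect to need \emph{two} extremal configurations, one witnessing each term. For the $t$-long term $\ltnk=t2^d+b$, the natural choice is to take $r=v_1$ (a simplicial vertex) and pile $t2^d-1$ pebbles on the antipodal simplicial vertex $v_n$ at distance $d$, then distribute $b$ stranded single pebbles on a carefully chosen set of $b$ other vertices so that none can contribute to $r$; a standard weight/potential argument (cost $2^d$ per pebble delivered from distance $d$) shows this places fewer than $t$ pebbles on $r$. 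For the $t$-wide term $\wtn=2t+n-2$, I would place $2t-1$ pebbles on some vertex together with a single pebble on each of the remaining $n-2$ non-target vertices, again arguing that the singletons are useless and $2t-1$ pebbles cannot yield $t$ on the neighbor. The point where the two cases trade off is exactly $t_0$, matching the piecewise description already given in Section \ref{s:def}.

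For the upper bound $\p_t(\Pnk)\le\ptnk$, this is where Theorem \ref{t:Dsolve} does the heavy lifting. Specializing $D=tr$ (a single target vertex with multiplicity $t$), we have $s(D)=1$, so Theorem \ref{t:Dsolve} gives $\p(\Pnk,tr)\le\ptnk-s(D)+1=\ptnk$ for every vertex $r$. Since $\p_t(\Pnk,r)=\p(\Pnk,tr)$ by the definitional identity noted in Section \ref{s:def}, taking the maximum over $r\in V(\Pnk)$ yields $\p_t(\Pnk)=\max_r\p_t(\Pnk,r)\le\ptnk$ immediately. Thus the upper bound is essentially a corollary of the Strong Target Conjecture result for path powers.

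The main obstacle is therefore not the upper bound, which is handed to us by Theorem \ref{t:Dsolve}, but rather the careful construction and verification of the lower-bound configurations. The delicate part is choosing the $b$ (respectively $n-2$) singleton placements in the $t$-long (respectively $t$-wide) extremal configuration so that every stranded pebble is genuinely unusable toward the target, which requires exploiting the precise adjacency structure $v_i\sim v_j\iff 1\le|i-j|\le k$ and the eccentricity/distance relations encoded in $d=\lfloor(n-2)/k\rfloor+1$ and $b=(n-2)-k(d-1)$. One must also confirm that the same worst-case target vertex achieves both the wide and long bounds, so that the single quantity $\ptnk=\max\{\wtn,\ltnk\}$ is attained; I would verify this by checking that a simplicial endpoint serves as the extremal target in the long regime while the wide regime is insensitive to the choice of $r$, and that the crossover at $t_0$ is consistent with the formula.
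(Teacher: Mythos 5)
Your proposal is correct and follows the paper's proof essentially verbatim: the upper bound is obtained exactly as you describe, by specializing Theorem \ref{t:Dsolve} to $D=tr$ so that $s(D)=1$, and the lower bound is the paper's Lemma \ref{l:down_simpl}, whose two witness configurations $W$ and $L$ are precisely the configurations you construct (a pile of $2t-1$, resp.\ $t2^{d}-1$, pebbles on $v_n$ with singletons on all remaining non-target vertices, resp.\ on the $b$ other vertices at distance $d$ from the simplicial target $v_1$). One caution on execution: for the long witness with $b\ge 1$ a pure weight count (``cost $2^d$ per pebble delivered from distance $d$'') is not enough, since the $b$ singletons at distance $d$ push the total weight up to at least $t$; the paper instead uses the potential argument (Fact \ref{f:pot}) --- only disjoint pairs can exit the distance-$d$ layer, so at most $t2^{d-1}-1$ pebbles ever reach $V_{d-1}$, and then layer by layer at most $t2^{d-j}-1$ reach $V_{d-j}$, hence at most $t-1$ reach $r$.
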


% =============================================

As a result we obtain the following two corollaries.

\begin{cor}
\label{c:poly}
The $t$-fold pebbling number $\p_t(\Pnk)$ can be calculated in polynomial time (constant time if $k$ is known and linear time otherwise).
\end{cor}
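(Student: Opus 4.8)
The plan is to derive the corollary directly from the closed-form evaluation of $\ptnk$ guaranteed by Theorem \ref{t:main}, and then to account carefully for the computational model. By Theorem \ref{t:main} we have $\p_t(\Pnk)=\ptnk$, so the task reduces entirely to evaluating the explicit expression $\ptnk=\max\{\ltnk,\wtn\}$ for given inputs $n$, $k$, and $t$. I would therefore spend no effort on pebbling itself and instead isolate exactly which arithmetic steps this evaluation requires.

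First I would handle the case in which $k$ is known, i.e.\ the parameters $n$, $k$, and $t$ are presented as numbers. Here one computes $d=\lfloor(n-2)/k\rfloor+1$ with a single integer division, then $b=n-2-k(d-1)$ with a bounded amount of arithmetic, then $2^d$, and finally assembles $\ltnk=t2^d+b$ and $\wtn=2t+n-2$ and returns their maximum (equivalently, one may decide the regime by the single comparison $t(2^d-2)\le(d-1)k$ recorded after the definition of $\ptnk$). This is a constant number of arithmetic operations, so in the unit-cost model the evaluation is constant time; this yields the ``constant time if $k$ is known'' clause.

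Next I would treat the case in which the graph $\Pnk$ is supplied without its defining parameter $k$. Since $v_1$ and $v_n$ are the only simplicial vertices and $v_1$ is adjacent to exactly $v_2,\dots,v_{k+1}$, one recovers $k=\deg(v_1)$ (and $n=|V|$) by a single linear scan of the input, after which one invokes the constant-time evaluation above; this gives the ``linear time otherwise'' clause. For the general polynomial-time claim I would simply observe that, since $d=\lfloor(n-2)/k\rfloor+1\le n-1$, the integer $2^d$ has at most $n-1$ bits and is produced as a $1$ followed by $d$ zeros; hence, even charging for operand length in the bit model, every step above runs in time polynomial in $n$ (indeed dominated by the linear cost of forming $2^d$ and one multiplication by $t$).

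The only genuine subtlety, and the point I would be most careful to state explicitly, is the computational model: because $d$ can be as large as $n-1$ (e.g.\ when $k=1$), the output $\ptnk$ is an integer of $\Theta(n)$ bits and so is exponentially large in the bit-length of the input $(n,k,t)$. Consequently the three complexity assertions must be read with the number of vertices $n$ as the size parameter---``constant'' and ``linear'' counting arithmetic operations (respectively, input-reading steps) in the unit-cost model, and ``polynomial'' meaning polynomial in $n$ in the bit model---and I would make this convention explicit at the outset so that the closed form of Theorem \ref{t:main} translates cleanly into the stated running times.
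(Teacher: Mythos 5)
Your proposal is correct and follows essentially the same route as the paper: reduce via Theorem \ref{t:main} to evaluating the closed formula, decide the regime by the single comparison of $t(2^d-2)$ with $k(d-1)$ for the constant-time claim, and recover $k$ as the degree of a simplicial vertex (equivalently $\d(\Pnk)$) in one linear scan for the other clause. Your added care about the computational model (unit-cost versus bit model, the $\Theta(n)$-bit output) is a reasonable clarification the paper leaves implicit, but not a different argument.
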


The constant time follows from simply comparing $t(2^d-2)$ with $k(d-1)$, while the linear time follows from knowing that $k=\d(\Pnk)$ (which equals the degree of a simplicial vertex).

\begin{cor}
\label{c:expo}
Define the functions $M(n)=\lfloor (n-2)/(\lceil\lg n\rceil -2)\rfloor$ and $m(n)=\lfloor (n-2)/(\lceil\lg n\rceil -2)^2\rfloor$.
Then the pebbling exponents of paths are $\ep(P_2)=1$, $\ep(P_3)=\ep(P_4)=\ep(P_5)=2$, $\ep(P_6)=\ep(P_7)=\ep(P_8)=3$, 
and, for $n\ge 9$,
$M(n) - m(n)
\le \ep(P_n)\le M(n)$.
\end{cor}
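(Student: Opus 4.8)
The plan is to derive Corollary~\ref{c:expo} directly from the formula $\p(\Pnk)=p_1(n,k)=\max\{l_1(n,k),w_1(n)\}$ furnished by Theorem~\ref{t:main} with $t=1$. Recall that $\ep(P_n)$ is the least $k$ for which $\p(\Pnk)=n(\Pnk)=n$, i.e. the least $k$ making $P_n$ Class~0 under the $k$-th power. The key observation is that $\p(\Pnk)=n$ forces the \emph{$1$-long} branch: in the $1$-wide branch one has $w_1(n)=2+n-2=n$ as well, but $l_1(n,k)=2^d+b$ with $d=\lfloor(n-2)/k\rfloor+1$, and since $2^d+b$ grows while $n$ is fixed, reaching equality with $n$ requires $d$ to be small. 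Concretely, $\p(\Pnk)=n$ holds exactly when $\max\{2^d+b,\,n\}=n$, i.e. when $2^d+b\le n$, where $b=(n-2)-k(d-1)$.

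First I would translate the condition $2^d+b\le n$ into a clean inequality in $k$ and $d$. Substituting $b$ gives $2^d+(n-2)-k(d-1)\le n$, which simplifies to $k(d-1)\ge 2^d-2$, i.e. $k\ge (2^d-2)/(d-1)$ (the boundary $t_0(k,d)$ already appearing in the main theorem with $t=1$). Thus $\ep(P_n)$ is the least $k$ such that, writing $d=\lfloor(n-2)/k\rfloor+1$, we have $k(d-1)\ge 2^d-2$. Next I would determine the value of $d$ at the threshold. Since $\p(\Pnk)=n$ requires $2^d\le 2^d+b\le n$, we need $d\le\lg n$, hence $d\le\lceil\lg n\rceil$; conversely decreasing $k$ increases $d$, so the critical $d$ should be essentially $d=\lceil\lg n\rceil-1$ (so that $2^d$ stays just below $n$). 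I would pin this down by checking that at $d=\lceil\lg n\rceil-1$ the inequality $k(d-1)\ge 2^d-2$ is satisfiable with $d=\lfloor(n-2)/k\rfloor+1$, and that $d=\lceil\lg n\rceil$ forces $2^d$ too large relative to the available slack $n-b$.

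With the critical diameter identified as $d^*=\lceil\lg n\rceil-1$, the constraint $d=\lfloor(n-2)/k\rfloor+1=d^*$ reads $\lfloor(n-2)/k\rfloor=\lceil\lg n\rceil-2$, which holds precisely for $k$ in the range $(n-2)/(\lceil\lg n\rceil-1)<k\le(n-2)/(\lceil\lg n\rceil-2)$. The upper end of this range is exactly $M(n)=\lfloor(n-2)/(\lceil\lg n\rceil-2)\rfloor$, giving the upper bound $\ep(P_n)\le M(n)$: taking $k=M(n)$ realizes diameter $d^*$ and one verifies $k(d^*-1)\ge 2^{d^*}-2$. For the lower bound I would combine the diameter constraint with the feasibility inequality $k(d^*-1)\ge 2^{d^*}-2=2^{\lceil\lg n\rceil-1}-2\ge n/2-2$, yielding $k\ge (n/2-2)/(\lceil\lg n\rceil-2)$, and then sharpen the bookkeeping so the deficit from $M(n)$ is at most $m(n)=\lfloor(n-2)/(\lceil\lg n\rceil-2)^2\rfloor$; the factor $(\lceil\lg n\rceil-2)^2$ in $m(n)$ strongly suggests that the gap $M(n)-\ep(P_n)$ arises from the difference between $d^*-1$ and $d^*$ in the denominator of the feasibility bound. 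Finally I would dispatch the small cases $n\le 8$ by direct computation from the formula, since $\lceil\lg n\rceil-2$ is too small (zero or one) for the asymptotic estimates to apply. I expect the main obstacle to be the lower bound: controlling the interaction between the floor in $d=\lfloor(n-2)/k\rfloor+1$ and the exponential threshold $2^d-2$ tightly enough to land within the additive error $m(n)$, rather than merely within a constant factor, will require careful case analysis of how $d$ jumps as $k$ decreases past the critical window.
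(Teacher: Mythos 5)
Your opening reduction coincides with the paper's: by Theorem \ref{t:main} with $t=1$, $P_n^{(k)}$ is Class 0 exactly when $k(d-1)\ge 2^d-2$, where $d=\lfloor(n-2)/k\rfloor+1$, so $\ep(P_n)$ is the least such $k$ (both you and the paper implicitly use that this condition is monotone in $k$, which does hold, since $(2^x-2)/(x-1)$ increases in $x$ while $d$ is non-increasing in $k$). The genuine gap is in your upper bound step: the claim that taking $k=M(n)$ ``realizes diameter $d^*=\lceil\lg n\rceil-1$ and one verifies $k(d^*-1)\ge 2^{d^*}-2$'' is false. For $n=33$ we have $M(33)=\lfloor 31/4\rfloor=7$ and $d=\lfloor 31/7\rfloor+1=5=d^*$, but $7\cdot 4=28<30=2^5-2$, so $P_{33}^{(7)}$ is not Class 0 (its pebbling number is $35$); in fact $\ep(P_{33})=8=M(33)+1$. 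This failure occurs whenever $n$ sits just above a power of $2$ --- exactly the values ($33$, $65$, $257$, \dots) at which the paper remarks its upper bound is tight --- and it exposes an off-by-one discrepancy in the printed statement itself: the paper's own proof does \emph{not} use $k=M(n)$, but sets $\k=M(n)+1$, shows that the diameter then drops to $\d=\lceil\lg n\rceil-2$, so that $2^{\d}\le n/2$, and verifies $\k(\d-1)>(n-4)/2\ge 2^{\d}-2$, i.e.\ it proves $\ep(P_n)\le M(n)+1$. The idea your plan is missing is that one must step past $M(n)$ precisely so that the diameter decreases by one and the threshold $2^d-2$ halves; no argument pinned to $k=M(n)$ and diameter $\lceil\lg n\rceil-1$ can be repaired, because the inequality it needs is simply not true for these $n$.

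For the lower bound you offer only a plan, and the inequality you actually derive, $k\ge(n/2-2)/(\lceil\lg n\rceil-2)$, is about $M(n)/2$: a multiplicative factor-2 bound, not a bound within the additive error $m(n)$ that the statement requires. The paper's proof here is a substantial computation organized differently from your sketch: it sets $\k'=M(n)-m(n)-1$, proves that the diameter at $k=\k'$ equals $\lceil\lg n\rceil$ (again not your $d^*$), and then shows $\k'(\lceil\lg n\rceil-1)<2^{\lceil\lg n\rceil}-2$ via estimates such as $2\l^3+\l^2-\l+2<2^{\l+2}$ for $\l=\lceil\lg n\rceil-2\ge 9$, with the range $9\le n\le 2^{10}$ checked by computer; monotonicity then gives $\ep(P_n)\ge\k'+1=M(n)-m(n)$. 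Since this computation --- which you yourself flag as the main obstacle --- is absent, the proposal as written establishes neither inequality of the corollary.
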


We prove this in Section \ref{s:Correct}.
We also note that the upper bound is tight at $n=33$, $65$, $257$, and many other values, typically of the form $n=2^i+j$ for a few small values of $j$.
In fact, numerical evidence suggests that it may be tight at $n=2^i+1$ for most values of $i\ge 5$.
Moreover, except for values of $n$ in the range of something like $[2^i,2^i+i^2)$, it appears that the tighter bounds $M(n)-m(n)+2\le \ep(P_n)\le M(n)-m(n)+3$ may hold.

%%%%%%%%%%%%%%%%%%%%%%%%%%%%%%%%%%%%%%%%%%%%%%%
%       TECHNICAL LEMMAS
%%%%%%%%%%%%%%%%%%%%%%%%%%%%%%%%%%%%%%%%%%%%%%%
\section{Technical Lemmas} 
\label{s:tech}

%%%%%%%%%%%%%%%%%%%%%%%%%%%%%%%%%%%%%%%%%%%%%%%
%       SUBSECTION GENERAL
%%%%%%%%%%%%%%%%%%%%%%%%%%%%%%%%%%%%%%%%%%%%%%%
\subsection{General Lemmas}
\label{ss:General}

Given a configuration $C$  of pebbles, a {\it potential move} is either a pair of pebbles sitting on the same vertex, or a single pebble sitting on a target vertex, which in either case is called {\it potential vertex}.
When counting the number of potential moves of $C$ in relation to a target $D$, we must be careful about counting too many singletons on a target vertex $v$: $\min\{C(v),D(v)\}$ of the pebbles there are singleton potentials, while the other $C(v)-D(v)$ pebbles must be counted in pairs (since they would need to move to solve other targets).
To say that $C$ has $j$ potential moves means that the $j$ pairs and singletons are pairwise disjoint.
For example, the configuration $C$ on 5 vertices $(v_1,\ldots,v_5)$ with values $(0,1,1,2,7)$ has 4 potential moves if the target $D$ has values $(2,0,0,0,0)$, and 5 potential moves if $D$ has values $(1,0,0,0,2)$.
The {\it potential} of $C$, $\pot(C)$, is the maximum $j$ for which $C$ has $j$ potential moves.
Because every solution that requires a pebbling move uses a potential move, the following fact is evident. 

\begin{fct} 
\label{f:pot}
Let $r$ be an empty vertex in a configuration $C$ with $\pot(C)<t$. 
Then $C$ is not $t$-fold $r$-solvable.
\end{fct}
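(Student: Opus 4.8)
The plan is to exhibit a single integer-valued weight function on configurations that never increases under a pebbling step, that equals $\pot(C)$ at the start, and that any putative $t$-fold solution would force to be at least $t$; the contradiction is then immediate.

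First I would record what $\pot(C)$ actually is in this situation. The relevant target is $D=tr$, and since $r$ is empty we have $C(r)=0$, so $r$ contributes $\min\{C(r),t\}=0$ singleton potentials and no pairs. Every other vertex is a non-target vertex and contributes only pairs, so $\pot(C)=\sum_{v\ne r}\lfloor C(v)/2\rfloor$.

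Next I would define the weight $\Phi(C)=C(r)+\sum_{v\ne r}\lfloor C(v)/2\rfloor$ and observe that, because $r$ starts empty, $\Phi(C)=\pot(C)<t$ initially. The heart of the argument is to check that $\Phi$ is non-increasing along any pebbling step $u\peb w$. If neither endpoint is $r$, then $\lfloor C(u)/2\rfloor$ drops by exactly $1$ while $\lfloor C(w)/2\rfloor$ rises by at most $1$, so $\Phi$ changes by at most $0$. If the step lands on $r$ (so $w=r$, $u\ne r$), then the pair-term loses $1$ at $u$ while the $C(r)$-term gains $1$, for a net change of $0$. Finally, a step off of $r$ removes $2$ from $C(r)$ and adds at most one pair elsewhere, again a change of at most $0$. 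In every case $\Phi$ does not increase.

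Consequently, after any sequence of steps the resulting configuration $C'$ satisfies $\Phi(C')\le\Phi(C)=\pot(C)<t$. But a $t$-fold $r$-solution would produce a $C'$ with $C'(r)\ge t$, whence $\Phi(C')\ge C'(r)\ge t$, a contradiction, so no such solution exists. The only genuinely delicate point is the case where a step deposits a pebble on $r$: one must see that the unit gained on $r$ is exactly paid for by the pair consumed at $u$, which is precisely what pins the net change at $0$ and makes $\Phi$ a valid monotone invariant; the remaining cases are routine.
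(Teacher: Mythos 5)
Your proof is correct. The paper offers no formal argument for this fact: it simply remarks that ``every solution that requires a pebbling move uses a potential move'' and declares the statement evident. Your monotone invariant $\Phi(C')=C'(r)+\sum_{v\ne r}\lfloor C'(v)/2\rfloor$ is exactly a rigorous packaging of that remark --- it tracks ``pebbles already delivered to $r$ plus potential moves still available'' --- and your three-case check is sound: for a step $u\peb w$ with $u,w\ne r$ the pair count at $u$ drops by exactly one ($\lfloor (a-2)/2\rfloor=\lfloor a/2\rfloor-1$) while the pair count at $w$ rises by at most one; for a step onto $r$ the unit gained on $r$ is exactly paid for by the pair consumed at $u$; and a step off $r$ loses two units against at most one new pair. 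Since $\Phi$ starts at $\pot(C)<t$ and a $t$-fold solution would force $\Phi\ge t$, the contradiction is immediate. What your version buys over the paper's one-liner is a self-contained verification that needs no tracing or rearranging of individual solution sequences (no appeal to tidiness or acyclicity), and it actually proves the slightly stronger invariant statement that at every intermediate stage of any move sequence the number of pebbles on $r$ plus the remaining potential is at most $\pot(C)$.
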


Another useful tool is the following lemma.

\begin{lem}
\label{l:PotLem}
{\bf (Potential Lemma)}
Let $G$ be a graph on $n$ vertices. 
If $C$ is a configuration on $G$ of size $n+y$ ($y\ge 0$) having $z$ zeros, then $\pot(C)\ge\big\lceil \frac{y+z}{2}\big\rceil$.
\end{lem}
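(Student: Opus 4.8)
The plan is to give $\pot(C)$ a concrete combinatorial reading and then reduce the claim to an elementary counting inequality. Recall that a potential move is either a pair of pebbles on a common vertex or a lone pebble on a target vertex. Since a pebble counted as a singleton on a target never reduces the number of pairwise-disjoint potential moves available at that vertex (replacing $\lfloor C(v)/2\rfloor$ by $\min\{C(v),D(v)\}+\lfloor(C(v)-D(v))^+/2\rfloor\ge\lfloor C(v)/2\rfloor$), it suffices to lower-bound $\pot(C)$ by the number of disjoint pebble \emph{pairs} alone, regardless of which target distribution is in play. On a single vertex $v$ one can form $\lfloor C(v)/2\rfloor$ disjoint pairs, so the first step is to record the bound $\pot(C)\ge\sum_{v}\lfloor C(v)/2\rfloor$.

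The heart of the argument is the pointwise estimate $\lfloor C(v)/2\rfloor\ge (C(v)-1)/2$, valid for every $v$ with $C(v)\ge 1$ (with equality exactly when $C(v)$ is odd). Summing over the $n-z$ nonempty vertices — the $z$ empty vertices contribute nothing and are simply dropped — yields $\sum_v\lfloor C(v)/2\rfloor\ge \tfrac12\big(\sum_{v:\,C(v)\ge 1}C(v)-(n-z)\big)$. The one piece of bookkeeping to get right is that the $-1$ is incurred exactly once per nonempty vertex, that is, $n-z$ times in total.

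Substituting $\sum_{v:\,C(v)\ge 1}C(v)=|C|=n+y$ then collapses the right-hand side to $\tfrac12\big((n+y)-(n-z)\big)=\tfrac12(y+z)$. Finally, because $\pot(C)$ is an integer that is bounded below by the real number $(y+z)/2$, it must in fact be at least $\big\lceil (y+z)/2\big\rceil$, which is precisely the claimed inequality.

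I do not expect a genuine obstacle; this is a short, self-contained estimate. The only points that need care are (i) verifying that treating pebbles on targets as singletons never lowers the pairs-only count, so that $\sum_v\lfloor C(v)/2\rfloor$ is a legitimate lower bound for $\pot(C)$ in every case, and (ii) handling the floor and ceiling correctly — in particular, observing that it is exactly the integrality of $\pot(C)$ that upgrades the bare bound $(y+z)/2$ to the ceiling $\big\lceil (y+z)/2\big\rceil$.
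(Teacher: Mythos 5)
Your proof is correct. The paper in fact states the Potential Lemma without any proof (it is treated as evident), so there is nothing to compare against; your argument --- lower-bounding $\pot(C)$ by the pairs-only count $\sum_v \lfloor C(v)/2\rfloor$, applying $\lfloor C(v)/2\rfloor \ge (C(v)-1)/2$ at each of the $n-z$ nonempty vertices to get $\pot(C)\ge \frac{1}{2}\big((n+y)-(n-z)\big)$, and then using integrality to obtain the ceiling --- is the natural and complete justification the authors presumably had in mind. Your verification that the prescribed singleton counting at target vertices never falls below $\lfloor C(v)/2\rfloor$ is a detail worth making explicit, since it is what makes the lemma valid uniformly over all target distributions $D$.
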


%For a target vertex $r$ in a graph $G$, define the {\it weight} of a vertex $v$ by $\w(v)=2^{-\dist(v,r)}$.
%For a configuration $C$ on the vertices of $G$, define the {\it weight} of $C$ by $\w(C)=\sum_{v\in V(G)}C(v)\w(v)$.
%The following lemma is well known, generalizing the $t=1$ case of \cite{Moews}.

%\begin{lem}
%\label{l:PathWeight}
%{\bf (Path Weight Lemma)}
%Given a path $P$ with leaf target $r$, a configuration $C$ on $P$ is $t$-fold $r$-solvable if and only if $\w(C)\ge t$.
%\gh{(We don't use this.)}
%\end{lem}

A {\it (induced) slide} from a potential vertex $v$ to a vertex $r$ is a (induced) path between $v$ and $r$ with a pebble on each interior vertex. 
Two slides are disjoint if the corresponding sets of  pebbles are disjoint.

Let $C$ be a configuration of $c$ pebbles $p_1,p_2,\ldots,p_c$ on a graph $G$. 
Let $\cS$ be a $t$-fold $r$-solution of $C$ moving pebbles $p_1,p_2,\ldots,p_t$ into the target vertex $r$. 
This means that $\cS$ is a sequence $\sigma_1, \sigma_2, \ldots, \sigma_h$ of pebbling steps after which a configuration with the pebbles  $p_1,p_2,\ldots,p_t$ on $r$ is obtained.
Assume that $\sigma_i$ moves the pebble $p_{\sigma_i }$ from vertex $v_{\sigma_i}$ to vertex $v'_{\sigma_i}$ and discards the pebble $p'_{\sigma_i}$.
We define the directed multigraph $G(\cS)$ to have the same vertex set as $G$, with a directed edge $(v_{\sigma_i},v'_{\sigma_i})$ for each pebbling step $\sigma_i$.
The following lemma of \cite{Moews} is very useful.
(It was given its descriptive name in \cite{BCCMW}.)

\begin{lem}
\label{l:NoCycle}
{\bf (No-Cycle Lemma)}
\cite{Moews}
If $C$ is $r$-solvable then there is an $r$-solution $\cS$ for which $G(\cS)$ is acyclic.
\end{lem}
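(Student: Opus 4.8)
The plan is to pass from an arbitrary $r$-solution to a cycle-free one by repeatedly deleting directed cycles from its transition multigraph, justifying that each deletion leaves a multigraph that can still be executed. The crucial enabling fact is a feasibility criterion for acyclic transition multigraphs, which I would establish first. For any directed multigraph $H$ whose edges lie along edges of $G$, executing the corresponding steps in a valid order sends $C$ to the configuration $C'$ with $C'(v)=C(v)-2\,\mathrm{out}_H(v)+\mathrm{in}_H(v)$, where $\mathrm{out}_H(v)$ and $\mathrm{in}_H(v)$ denote the out- and in-degrees of $v$ in $H$.

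The key lemma I would prove is: if $H$ is acyclic and $C'(v)\ge 0$ for every $v$, then the steps of $H$ can actually be executed from $C$, reaching $C'$. The argument is to process the vertices in a topological order of $H$; when a vertex $v$ is reached, all of its incoming steps have already fired, so $v$ holds $C(v)+\mathrm{in}_H(v)$ pebbles, and firing its $\mathrm{out}_H(v)$ outgoing steps leaves exactly $C'(v)\ge 0$ there, never passing below zero. This is the step I expect to be the main obstacle, since it is where the real combinatorial content (acyclicity implies executability) resides; the net-effect formula above tells us only what a sequence accomplishes, not that one exists.

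Next I would run the cycle elimination. Starting from an $r$-solution $\cS$ with $H=G(\cS)$—which already satisfies $C'(v)\ge 0$ for all $v$ and $C'(r)\ge 1$, since $\cS$ is a genuine solution—I would repeatedly locate a simple directed cycle $Z$ in $H$ and delete one copy of each of its edges. Each vertex on $Z$ loses exactly one incoming and one outgoing edge, so its balance changes by $-1+2=+1$, while off-cycle vertices are unaffected; hence the new net configuration dominates $C'$ pointwise. In particular $C'(r)$ never decreases and every coordinate stays nonnegative, so the reduced multigraph still represents an $r$-solution in the net sense. Since each deletion strictly decreases the edge count, the process terminates at an acyclic multigraph $H^{*}$ whose net configuration is $\ge C'$, hence still nonnegative and still carrying at least one pebble on $r$.

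Finally I would invoke the feasibility lemma on $H^{*}$: being acyclic with nonnegative net configuration, it is executable from $C$, and the resulting pebbling sequence $\cS^{*}$ satisfies $G(\cS^{*})=H^{*}$, which is acyclic and places at least one pebble on $r$, as required. I should verify the one delicate case of the deletion step, namely when $r$ itself lies on $Z$; but there $r$ has exactly one in-edge and one out-edge in the cycle, so the same $+1$ balance applies and $C'(r)$ still cannot drop below $1$. This completes the reduction.
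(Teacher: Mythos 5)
Your proof is correct. Note that the paper does not prove this lemma at all---it is quoted from Moews \cite{Moews}---so there is no internal proof to compare against; your argument (delete directed cycles, observing that each deletion only raises the net pebble count at cycle vertices, then re-execute the resulting acyclic multigraph in topological order) is essentially the standard proof from the literature, and you correctly identified the real content as the executability of an acyclic move multigraph with nonnegative net configuration, which your topological-order argument handles properly.
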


Another simplifying concept may be assumed by the next lemma.
For $1\leq j \leq t$, let $\mfs_j$ be the subsequence of $\cS$ moving the pebble $p_j$ from its original position (say a vertex $v_j$) to the target $r$ (i.e. the subsequence 
of movements $\sigma_i$ such that $p_{\sigma_i }=p_j$)   , and let $\mfs$ be the subsequence of $\cS$ formed by the remaining pebbling steps.
We say that $\cS$ is a {\it tidy} solution if $\cS=\mfs, \mfs_1, \ldots, \mfs_t$, with each subsequence $\mfs_j$ consisting of moving the pebble $p_j$ along an \emph{induced} slide from the vertex $v_j$ to $r$.

We say that $\cS'$ is a {\it tidy rearrangement} of $\cS$ if $\cS'$ is a permutation of the pebbling steps of $\cS$ and $\cS'$ is tidy.
Since none of the pebbles $p_1,p_2,\ldots,p_t$ is discarded in the solution $\cS$, the pebbling steps of $\cS$ can be permuted to create a new $t$-fold $r$-solution sequence $\cS'$, namely, $\mfs$,$\mfs_1$,$\ldots$,$\mfs_t$.
Notice that when $\cS$ is {\it minimum} (i.e., using the fewest pebbling steps), the No-Cycle Lemma \ref{l:NoCycle} implies that each subsequence $\mfs_j$ consists of moving the pebble $p_j$ along an \emph{induced} slide from the vertex $v_j$ to $r$.
That is, $\cS'$ is tidy.
We record this as follows.

\begin{lem}
\label{l:Tidy}
{\bf (Tidy Lemma)}
If $\cS$ is a $t$-fold $r$-solution of a configuration $C$ on the graph $G$, then there is a tidy rearrangement $\cS'$ of $\cS$.
\end{lem}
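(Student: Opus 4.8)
The plan is to build the tidy sequence in two stages: first arrange that each target pebble travels along an induced path, and then permute the steps into the block order $\mfs,\mfs_1,\ldots,\mfs_t$. The only genuine content beyond bookkeeping is that these two requirements are compatible, and I would establish compatibility by exploiting the one structural feature singled out in the setup, namely that none of $p_1,\ldots,p_t$ is ever discarded.

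For the induced-path requirement I would first replace $\cS$ by a \emph{minimum} $t$-fold $r$-solution (one using the fewest pebbling steps); such a solution exists because $\cS$ itself certifies that $C$ is $t$-fold $r$-solvable. By the No-Cycle Lemma \ref{l:NoCycle} I may take $G(\cS)$ acyclic, so the edges traversed by a fixed pebble $p_j$ form a directed walk from $v_j$ to $r$ that cannot revisit a vertex; hence the trajectory of $p_j$ is a simple path $v_j=u_0,u_1,\ldots,u_m=r$. Minimality then upgrades this path to an induced one: if $u_a$ and $u_b$ were adjacent in $G$ for some $b>a+1$, then rerouting $p_j$ directly by the single step $u_a\peb u_b$ (and deleting the now-unused steps along $u_{a+1},\ldots,u_{b-1}$) would produce a strictly shorter solution, a contradiction. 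Thus each $\mfs_j$ already moves $p_j$ along an induced slide.

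For the reordering I would argue via the dependency partial order on the steps, in which a step $\sigma$ must precede a step $\tau$ exactly when $\tau$ consumes a pebble that $\sigma$ produced. The key observation is that the only pebble produced by a step of $\mfs_j$ is $p_j$ itself, at its next location, and $p_j$ is by definition consumed only by the following step of $\mfs_j$. Hence no step of $\mfs$ and no step of another block $\mfs_{j'}$ can depend on a step of $\mfs_j$. Symmetrically, the pebble discarded by an $\mfs_j$-step is never a target pebble $p_{j'}$ (those reach $r$ and so are never discarded), so it is supplied either by the initial configuration or by a step of $\mfs$; thus each block $\mfs_j$ depends only on $\mfs$, and the blocks are mutually dependency-independent. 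Therefore the concatenation $\mfs,\mfs_1,\ldots,\mfs_t$, with $\mfs$ internally ordered by its own dependencies, is a valid linear extension of the partial order, so it is executable from $C$ and still deposits $p_1,\ldots,p_t$ on $r$. This $\cS'$ is then tidy by construction.

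The step I expect to be the main obstacle is reconciling the word \emph{rearrangement}: a literal permutation of the steps of a given $\cS$ cannot turn a non-induced trajectory into an induced one, so the reduction to a minimum solution is essential and must be folded into the argument (the tidy sequence is a rearrangement of a minimum solution derived from $\cS$). A secondary point requiring care is verifying executability of the block order, namely that when block $\mfs_j$ is reached there are still two pebbles available to fire each step of its induced slide; this is precisely what the dependency analysis guarantees, since every pebble that an $\mfs_j$-step discards comes from the initial configuration or from $\mfs$, both of which precede $\mfs_j$.
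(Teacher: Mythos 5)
Your proof is correct and takes essentially the same route as the paper: pass to a minimum solution, combine the No-Cycle Lemma with minimality to make each target pebble's trajectory an induced slide, and use the fact that the target pebbles $p_1,\ldots,p_t$ are never discarded to justify executing the blocks in the order $\mfs,\mfs_1,\ldots,\mfs_t$. Your explicit chord-shortcut and dependency-order arguments, and your observation that the reduction to a minimum solution must be folded into the word \emph{rearrangement}, merely spell out details that the paper's proof leaves implicit.
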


For an $r$-solution $\s=\s_1, \ldots, \s_c$, we define its {\it cost} to equal $\cost(\s)=c+1$.
The idea is that we lose one pebble in each step, plus one more pebble that is placed on $r$ --- these pebbles cannot be used in subsequent $r$-solutions.
We say that $\s$ is {\it cheap} if $\cost(\s)\le 2^d$ (and {\it super-cheap} if $\cost(\s)< 2^d$), and define the parameter $q(G,r)$ to be the minimum number of pebbles $m$ so that every configuration of $m$ pebbles has a cheap $r$-solution.
Of course we always have $q(G,r)\ge \p(G,r)$.
For the particular graph $G=\Pnk$ with simplicial target $r$ we instead use the notation $q(n,k)$.

We say that a graph $G$ is $r$-(semi)greedy if every configuration of size at least $\pi(G,r)$
has a (semi)greedy $r$-solution; that is, every pebbling step in the solution decreases (does not increase) the distance of the moved pebble to $r$.

\begin{lem}
\label{l:CheapLemma}
{\bf (Cheap Lemma)}
\cite{AGHSemi2T}
Given the graph $G$ with target $r$ let $G^*$ be an $r$-greedy spanning subgraph of $G$ preserving distances to $r$.
Then any configuration of $G$ of size at least $\pi(G^*,r)$ is cheap; i.e. $q(G,r)\le \pi(G^*,r)$.
\end{lem}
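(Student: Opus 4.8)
The plan is to take a configuration $C$ on $G$ with $|C|\ge\pi(G^*,r)$, extract a greedy solution in $G^*$, lift it to $G$, and bound its cost by a weight-function argument. First I would observe that, since $G^*$ is a spanning subgraph of $G$, the configuration $C$ is equally a configuration on $G^*$, and $|C|\ge\pi(G^*,r)$ guarantees that $C$ is $r$-solvable in $G^*$. Because $G^*$ is $r$-greedy, I may choose an $r$-solution $\cS$ of $C$ in $G^*$ in which every pebbling step strictly decreases the distance of the moved pebble to $r$; among all such greedy solutions I take one that is minimum and places exactly one pebble on $r$. Every edge of $G^*$ is an edge of $G$, so $\cS$ is also a valid $r$-solution in $G$.

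Next I would identify the cost of $\cS$ with the number of pebbles it consumes. Writing $\cS=\s_1,\ldots,\s_c$, each step destroys exactly one pebble, so if $P$ denotes the multiset of initial pebbles actually used by $\cS$, then $|P|-c=1$, whence $\cost(\cS)=c+1=|P|$. Minimality ensures that no step is wasted: the final configuration is a single pebble on $r$, every other pebble of $P$ is consumed, and none is stranded at an interior vertex.

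The crux is a weight-conservation estimate. Assign to a pebble on a vertex $v$ the weight $2^{-\dist_G(v,r)}$, and to a configuration the sum of its pebble weights. A greedy step removes two pebbles from a vertex at distance $\ell$ from $r$ and deposits one pebble at a neighbor; since adjacent vertices differ in distance to $r$ by at most one and greediness forces a strict decrease, that neighbor lies at distance $\ell-1$, so the step replaces weight $2\cdot 2^{-\ell}$ by $2^{-(\ell-1)}$ and preserves the total weight. Running $\cS$ on the sub-configuration $P$ alone yields exactly one pebble on $r$, of weight $2^0=1$, giving $\sum_{p\in P}2^{-\dist_G(\mathrm{loc}(p),r)}=1$. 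Here the hypothesis that $G^*$ preserves distances to $r$ is essential, as it lets me measure weights with $\dist_G$ while working in $G^*$.

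Finally, every pebble of $P$ sits at distance at most $d$ from $r$, so each weight is at least $2^{-d}$, yielding $1=\sum_{p\in P}2^{-\dist_G(\mathrm{loc}(p),r)}\ge |P|\,2^{-d}$, i.e. $|P|\le 2^d$. Therefore $\cost(\cS)=|P|\le 2^d$, so $\cS$ is cheap and $q(G,r)\le\pi(G^*,r)$. I expect the main obstacle to be the bookkeeping that links the cost, the number of participating pebbles, and the conserved weight; once greediness is invoked to guarantee both acyclicity (distances strictly decrease along every directed path, so $G(\cS)$ cannot contain a cycle, echoing Lemma \ref{l:NoCycle}) and exact-by-one distance decreases, the weight function does the rest.
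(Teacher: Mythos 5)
Your proof is correct, and there is an important contextual point: this paper does not prove the Cheap Lemma at all --- it imports it from \cite{AGHSemi2T} --- so there is no in-paper argument to compare against, and your write-up stands as a self-contained proof. Your three ingredients fit together properly: (i) solvability of $C$ in the spanning subgraph $G^*$ lifts to $G$; (ii) a \emph{minimum} greedy solution strands no pebbles and places exactly one pebble on $r$, which is what justifies the identity $\cost(\cS)=c+1=|P|$; and (iii) conservation of the weight $\sum_{p}2^{-\dist_G(\mathrm{loc}(p),r)}$ under greedy steps gives $|P|\le 2^{\ecc_G(r)}\le 2^d$. You also correctly isolate the two places where the hypotheses are genuinely load-bearing: greediness plus distance preservation force each step to decrease $\dist_G$ by exactly one (so the weight is conserved, not merely monotone), and minimality is what makes the terminal weight exactly $1$ rather than at least $1$ --- without it, stranded pebbles could inflate $|P|$ past $2^d$, so that appeal to minimality is not cosmetic. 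This weight-conservation argument is the standard route to bounding the cost of greedy solutions and is consistent in spirit with the cited source; the one cautionary remark is that your claim that minimality eliminates stranded pebbles deserves its one-line justification (delete the stranded pebble's last move; the remaining sequence is still a greedy solution), but that is a routine completion rather than a gap.
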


In particular, if $T$ is a breadth-first-search spanning tree of $G$, rooted at $r$, then $q(G,r)\le \pi(T,r)$.
In our case we can choose $T$ to be a caterpillar with main path of length $d$, so that $\p(T,r)=2^d+n-d-1$; that is, $q(n,k)\le 2^d+n-d-1$.

Another useful lemma is the following.
The proof given in \cite{AGHSemi2T} for the case $t=1$ extends to all $t$.

\begin{lem}
\label{l:EdgeRemoval}
{\bf (Edge Removal Lemma)}
\cite{AGHSemi2T}
Given the graph $G$ with target $r$, if $e$ is an edge between two neighbors of $r$ then $\p_t(G,r)=\p_t(G-e,r)$.
\end{lem}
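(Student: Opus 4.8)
The plan is to establish the two inequalities $\p_t(G,r)\le\p_t(G-e,r)$ and $\p_t(G-e,r)\le\p_t(G,r)$ separately, with essentially all of the work going into the second. For the first I would only observe that $G-e$ is a spanning subgraph of $G$, so every pebbling step $u\peb v$ legal in $G-e$ is legal in $G$; hence any configuration that is $t$-fold $r$-solvable in $G-e$ is $t$-fold $r$-solvable in $G$, which gives $\p_t(G,r)\le\p_t(G-e,r)$ directly from the definitions.

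For the reverse inequality it suffices to show that every configuration $C$ that is $t$-fold $r$-solvable in $G$ is also $t$-fold $r$-solvable in $G-e$; then every configuration of size at least $\p_t(G,r)$ is $t$-fold $r$-solvable in $G-e$, yielding $\p_t(G-e,r)\le\p_t(G,r)$. Write $e=xy$, where $x$ and $y$ are both neighbors of $r$. The conceptual reason $e$ is dispensable is visible through the Tidy Lemma \ref{l:Tidy}: the $t$ pebbles delivered to $r$ can be made to travel along \emph{induced} slides ending at $r$, and in an induced path $u_0u_1\cdots u_\ell=r$ a vertex $u_i$ is adjacent to $u_\ell=r$ only when $i=\ell-1$. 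Thus such a path contains at most one neighbor of $r$ (the penultimate vertex) and so never traverses the edge $e$ joining the two neighbors $x$ and $y$. The final deliveries therefore never need $e$; the main obstacle is that the auxiliary ``feeder'' steps of the solution might.

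I would dispatch the feeder steps by induction on $|C|$, proving the statement simultaneously for all $t$. Fix any $t$-fold $r$-solution $\cS=\s_1,\ldots,\s_h$ of $C$ in $G$, and look at its first step $\s_1$, which moves two pebbles from some vertex $a$ to a neighbor $b$. If $\{a,b\}\ne\{x,y\}$ then $\s_1$ is legal in $G-e$; performing it leaves the configuration $C-2a+b$ (remove two pebbles from $a$, add one on $b$) of size $|C|-1$, which is $t$-fold $r$-solvable in $G$ via $\s_2,\ldots,\s_h$ and hence, by induction, $t$-fold $r$-solvable in $G-e$, so $C$ is too after prepending $\s_1$. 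If instead $\{a,b\}=\{x,y\}$, say $a=x$, then since $x\sim r$ in $G-e$ I would instead perform the legal step $x\peb r$, placing one pebble on $r$ and leaving $C-2x$. Now $C-2x+y$ is $t$-fold $r$-solvable in $G$ (again via $\s_2,\ldots,\s_h$), and by the single-pebble fact below, $C-2x$ is $(t-1)$-fold $r$-solvable in $G$. As $|C-2x|<|C|$, induction gives that $C-2x$ is $(t-1)$-fold $r$-solvable in $G-e$; together with the pebble already on $r$ this places $t$ pebbles on $r$ in $G-e$. The base cases ($t=0$, or no step required) are immediate.

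The only external ingredient is the single-pebble monotonicity used in the crossing case: if $C'+v$ is $s$-fold $r$-solvable then $C'$ is $(s-1)$-fold $r$-solvable, equivalently one extra pebble raises the maximum number deliverable to $r$ by at most one. This is a $t$-uniform, graph-independent fact, since after merging a lone pebble cannot create two net pebbles anywhere, so the whole argument is uniform in $t$; this is precisely the sense in which the $t=1$ proof of \cite{AGHSemi2T} extends to all $t$. Combining the two inequalities yields $\p_t(G,r)=\p_t(G-e,r)$.
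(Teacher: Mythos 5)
Your proposal is correct, and it is essentially the argument the paper has in mind: the paper gives no proof of its own here, citing the $t=1$ proof from \cite{AGHSemi2T} and asserting that it extends to all $t$, and your induction is a faithful realization of that extension. The heart of both arguments is the same redirect: at the moment a solution in $G$ would push a pebble across $e=xy$, the moving vertex holds two pebbles and is adjacent to $r$, so one may send a pebble to $r$ instead; your Case 1 (peeling off moves that avoid $e$, with induction on $|C|$) is just the step-by-step version of ``consider the first use of $e$.'' The one soft spot is the single-pebble monotonicity fact: your justification (``a lone pebble cannot create two net pebbles anywhere'') is an intuition, not a proof. The fact is true and standard, but in a self-contained write-up it needs its own short induction on the number of moves of a solution of $C'+v$: if the first move does not consume the extra pebble, apply induction to the resulting smaller solution; if it does, then $v$ carried one pebble of $C'$ with it, the move produces a single pebble on some neighbor $b$, and the remaining solution starts from $(C'-v)+b$, so induction gives that $C'-v$ (hence $C'$) delivers one fewer pebble to $r$. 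With that lemma proved (or cited), your argument is complete.
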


Let $S_i$ be the  slide from $v_n$ to $v_1$ with interior vertices  $v_{i_1}, v_{i_2}, \ldots , v_{i_{m_i}}$. 
Clearly, if $ j > j'$, then $i_j \leq i_{j'}$. 
If $S_i$ and $S_{\ell}$ are slides such that $i_1\leq \ell_1$, then we can assume that for every $j$, $i_j\leq \ell_j$.

Finally, we note a key property of $k$-connected graphs that follows from Menger's Theorem and Dirac's Fan Lemma (see Exercise 4.2.28 in \cite{West}.)

\begin{lem}
\label{l:menger}
Let $X$ and $Y$ be disjoint sets of vertices in a $k$-connected graph $G$.
For each $x\in X$ and $y\in Y$, let $u(x)$ and $w(y)$ be non-negative integers such that $\sum_{x\in X}u(x) = \sum_{y\in Y}w(y) = k$.
Then $G$ has $k$ pairwise internally disjoint $(X,Y)$-paths such that, for each $x\in X$ and $y\in Y$, $u(x)$ of them start at $x$ and $w(y)$ of them end at $y$.
\end{lem}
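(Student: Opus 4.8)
The plan is to recognize the statement as the integer-capacitated generalization of Menger's Theorem and to prove it by reducing to a single maximum-flow computation on an auxiliary network built from $G$. The unit-capacity case $u\equiv w\equiv 1$ is exactly the vertex form of Menger's Theorem, and the feature that as many as $u(x)$ of the paths are allowed to emanate from a single vertex $x$ is precisely a fan at $x$ in the sense of Dirac's Fan Lemma; the capacities $u$ and $w$ simply prescribe how thick each fan must be. So the whole statement should follow by running the standard max-flow--min-cut machinery once and reading off the paths, with $k$-connectivity supplying the crucial lower bound on the cut.

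Concretely, I would first build a directed network $N$ by the usual vertex-splitting device: replace each $v\in V(G)$ by an in-copy $v^-$ and an out-copy $v^+$ joined by an internal arc, give that internal arc capacity $1$ when $v\notin X\cup Y$, and replace every edge $uv\in E(G)$ by the two arcs $u^+\to v^-$ and $v^+\to u^-$ of unbounded capacity. I would then attach a super-source $s$ with an arc $s\to x^+$ of capacity $u(x)$ for each $x\in X$, and a super-sink $t$ with an arc $y^-\to t$ of capacity $w(y)$ for each $y\in Y$, treating the vertices of $X$ as pure sources and those of $Y$ as pure sinks (no pass-through arcs), which guarantees that the resulting paths have all interior vertices outside $X\cup Y$, i.e. are genuine $(X,Y)$-paths. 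The unit capacities on interior vertices force pairwise internal disjointness. The total capacity leaving $s$ equals $\sum_{x\in X}u(x)=k$ and the total capacity entering $t$ equals $\sum_{y\in Y}w(y)=k$; hence a maximum integral flow of value $k$ must saturate every source and sink arc, so exactly $u(x)$ units leave $x$ and exactly $w(y)$ units enter $y$. Decomposing such an integral flow into $k$ unit $s$--$t$ paths and deleting $s$ and $t$ yields the desired $k$ internally disjoint $(X,Y)$-paths with the prescribed endpoint multiplicities.

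The main obstacle is establishing that the minimum $s$--$t$ cut in $N$ has capacity at least $k$, which is the only place the hypothesis that $G$ is $k$-connected is used. Here I would take a finite-capacity cut, observe that it can only use the unit-capacity interior arcs together with some saturated source/sink arcs, and translate it into a vertex separator of $G$ whose size, weighted against the leftover capacities $u(x)$ and $w(y)$, would be smaller than $k$ unless the cut already has value $\ge k$; the identity $\sum_{x\in X}u(x)=\sum_{y\in Y}w(y)=k$ is exactly what makes this bookkeeping close. Equivalently, and perhaps more cleanly, I would invoke Dirac's Fan Lemma directly to produce, for each $x$, a fan of $u(x)$ internally disjoint paths leaving $x$, and then show these local fans can be packed globally using Menger's Theorem to meet the sink capacities $w(y)$; the delicate point in either route is reconciling the global internal disjointness of all $k$ paths with the locally prescribed sharing of the endpoints $x$ and $y$, and that is precisely the content of the cited Exercise~4.2.28 of \cite{West}, whose argument I would follow.
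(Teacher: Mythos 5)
You should note at the outset that the paper does not actually prove this lemma: it is quoted with a pointer to Exercise 4.2.28 of \cite{West}, with the remark that it follows from Menger's Theorem and Dirac's Fan Lemma. So your argument has to stand on its own, and it does not: the one step you defer --- that every finite-capacity $s$--$t$ cut in your network has capacity at least $k$ --- is the entire content of the lemma, and for the network you actually build it is \emph{false}. Take $G$ with vertex set $\{x_1,x_2,y_1,y_2,m\}$ and edges $x_1x_2,\ x_1m,\ my_1,\ y_1x_2,\ y_2m,\ y_2x_2$; this graph is $2$-connected. Let $X=\{x_1,x_2\}$, $Y=\{y_1,y_2\}$, $u(x_1)=2$, $u(x_2)=0$, $w(y_1)=2$, $w(y_2)=0$, so $k=2$. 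In your network $x_2$ and $y_2$ are pure source/sink vertices attached to $s$ and $t$ by arcs of capacity $0$, so they are effectively deleted; every unit of flow must then cross the internal arc of $m$, and the maximum flow is $1<k$. Yet the lemma's conclusion does hold in $G$, via the two internally disjoint paths $x_1my_1$ and $x_1x_2y_1$ --- note that the second passes through $x_2\in X$ \emph{internally}. In other words, your insistence that the paths be ``genuine $(X,Y)$-paths'' with all interior vertices outside $X\cup Y$ encodes a strictly stronger statement than the lemma, and that stronger statement is false whenever zero-multiplicity vertices of $X\cup Y$ are essential for routing; consequently no bookkeeping argument can deliver the min-cut bound you need.

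The repair requires two ideas that are missing from your sketch. First, the paths must be allowed to use the zero-multiplicity vertices of $X\cup Y$ as interior vertices (equivalently, replace $X$ and $Y$ by the supports of $u$ and $w$ before building the network, giving the discarded vertices ordinary unit pass-through capacity). Second, the cut-to-separator translation must be done via the observation that every $x$--$y$ path $Q$ in $G$ contains a ``strict'' $(X,Y)$-subpath: truncate $Q$ at its first vertex $y'$ of $Y$, then start at the last vertex $x'$ of $X$ before $y'$. If the cut consists of interior vertices $S$, source arcs over $A\subseteq X$, and sink arcs over $B\subseteq Y$, this observation shows that $S\cup A\cup B$ separates any $x\in X\setminus A$ from any $y\in Y\setminus B$ in $G$, giving $k\le |S|+|A|+|B|\le |S|+\sum_{x\in A}u(x)+\sum_{y\in B}w(y)$; the inequality $|A|\le\sum_{x\in A}u(x)$ is exactly where positivity of all multiplicities is used, and it is precisely what fails in the counterexample above. (The more standard route, and the actual content of West's exercise, avoids flows altogether: replace each $x$ by $u(x)$ true twins and each $y$ by $w(y)$ twins, check that adding twins preserves $k$-connectivity, and apply the set version of Menger's Theorem to the two resulting $k$-sets of clones.) As written, your proposal asserts a false intermediate claim and, when pressed, proposes to ``follow the argument of Exercise 4.2.28,'' i.e., defers to the very statement being proved; so it cannot be accepted.
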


From Lemma \ref{l:menger} we obtain the following simple corollary.

\begin{cor}
\label{c:menger}
Let $X$ and $Y$ be disjoint sets of vertices in a $k$-connected graph $G$.
For each $x\in X$ and $y\in Y$, let $u(x)$ and $w(y)$ be non-negative integers such that $\sum_{x\in X}u(x)\ge j$ and $\sum_{y\in Y}w(y)\ge j$ for some $j\le k$.
Then $G$ has $j$ pairwise internally disjoint $(X,Y)$-paths such that, for each $x\in X$ and $y\in Y$, at most $u(x)$ of them start at $x$ and at most $w(y)$ of them end at $y$.
\end{cor}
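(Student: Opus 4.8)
The plan is to reduce Corollary~\ref{c:menger} directly to Lemma~\ref{l:menger} by exploiting two elementary facts: first, that a $k$-connected graph is automatically $j$-connected for every $j\le k$; and second, that the inequality hypotheses $\sum_{x\in X}u(x)\ge j$ and $\sum_{y\in Y}w(y)\ge j$ can be tightened to equalities summing to exactly $j$ by shrinking the weight functions, which is harmless because the conclusion we seek asks only for an \emph{upper} bound on the number of paths incident to each vertex.

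First I would truncate the weights. Since $\sum_{x\in X}u(x)\ge j$, I can choose integers $u'(x)$ with $0\le u'(x)\le u(x)$ for every $x\in X$ and $\sum_{x\in X}u'(x)=j$; for instance, process the vertices of $X$ in any fixed order, assigning $u'(x)=u(x)$ until the running total would exceed $j$ and then assigning whatever remains. In the same way I would choose integers $w'(y)$ with $0\le w'(y)\le w(y)$ and $\sum_{y\in Y}w'(y)=j$. The reduced weights then satisfy $\sum_{x\in X}u'(x)=\sum_{y\in Y}w'(y)=j$ exactly.

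Next, observe that since $G$ is $k$-connected and $j\le k$, it is also $j$-connected (a $k$-connected graph has more than $k\ge j$ vertices and cannot be separated by fewer than $j$ of them). Hence I can apply Lemma~\ref{l:menger} to $G$ with the integer $j$ playing the role of $k$ and with the weight functions $u'$ and $w'$, whose sums both equal $j$. This yields $j$ pairwise internally disjoint $(X,Y)$-paths in which exactly $u'(x)$ start at each $x\in X$ and exactly $w'(y)$ end at each $y\in Y$. Since $u'(x)\le u(x)$ and $w'(y)\le w(y)$ for all $x$ and $y$, at most $u(x)$ of these paths start at $x$ and at most $w(y)$ end at $y$, which is precisely the conclusion of Corollary~\ref{c:menger}.

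There is no serious obstacle here; the only point that requires care is the \emph{direction} of the reduction. It is tempting to instead inflate $u$ and $w$ up to total weight $k$ so as to invoke the lemma verbatim, but any such inflation would force the lemma to route extra paths through some vertex $x$, possibly exceeding the allowed bound $u(x)$. The correct move is the opposite one: \emph{decrease} both the connectivity parameter (from $k$ to $j$, which is legitimate since $k$-connectivity implies $j$-connectivity) and the weights (via truncation), after which the exact-count conclusion of Lemma~\ref{l:menger} immediately delivers the at-most-count conclusion demanded here.
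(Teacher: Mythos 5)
Your proof is correct and follows exactly the same route as the paper's: truncate $u$ and $w$ to weights $u'\le u$, $w'\le w$ summing to exactly $j$, note that $k$-connectivity implies $j$-connectivity, and apply Lemma~\ref{l:menger} with $j$ in place of $k$. The paper's proof is just a terser statement of these same three steps, so there is nothing to add.
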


\begin{proof}
Since $G$ is $k$-connected it is also $j$-connected.
Choose any set of $u'(x)\le u(x)$ and $w'(y)\le w(y)$ such that $\sum_{x\in X}u'(x) = \sum_{y\in Y}w'(y) = j$.
Then apply Lemma \ref{l:menger}.
\end{proof}

%%%%%%%%%%%%%%%%%%%%%%%%%%%%%%%%%%%%%%%%%%%%%%%
%       SUBSECTION CUTTING
%%%%%%%%%%%%%%%%%%%%%%%%%%%%%%%%%%%%%%%%%%%%%%%
\subsection{Cutting Lemmas}
\label{ss:Cutting}

\begin{thm}
\label{t:internal}
Let $r$ be a vertex of a graph $G$ and denote by $E_r$ some set of edges between neighbors of $r$.
Suppose that $G-r-E_r$ has connected components $G_1,\ldots,G_j$ for some $j>1$, and define $G'_i$ to be the subgraph of $G$ induced by $V(G_i)\cup\{r\}$.
Then
\[\p_t(G,r)= \max_{\sum_{i=1}^jt_i=t+j-1} \sum_{i=1}^j \p_{t_i}(G'_i,r)-j+1.\]\
\end{thm}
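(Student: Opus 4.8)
The plan is to strip away the edges of $E_r$, recognize $r$ as a cut vertex separating the $G_i$, prove that the number of pebbles reachable at $r$ splits additively over the components, and finally convert the resulting optimization into the stated maximum. Throughout I will work entirely in $G-E_r$: since each edge of $E_r$ joins two neighbors of $r$, the Edge Removal Lemma (Lemma~\ref{l:EdgeRemoval}) gives $\p_t(G,r)=\p_t(G-E_r,r)$, so it suffices to prove the formula for $G-E_r$. In $G-E_r$ the vertex $r$ separates $G_1,\ldots,G_j$, and every edge lies inside a single $G'_i$ (an edge incident to $r$ belongs to the $G'_i$ containing its other endpoint, and there are no edges between distinct $G_i$).

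The central step is the following decomposition. Write a configuration $C$ as $C(r)=c_0$ together with its restrictions $C_i$ to the sets $V(G_i)$, and let $R_i$ be the maximum number of pebbles that $C_i$ alone can deliver to $r$ within $G'_i$. I claim the maximum number of pebbles placeable on $r$ equals $c_0+\sum_i R_i$. To see this, take a solution achieving the maximum and apply the No-Cycle Lemma~\ref{l:NoCycle}; the resulting acyclic solution never moves a pebble off $r$ (doing so could only be undone later, creating a cycle), so by the Tidy Lemma~\ref{l:Tidy} each delivered pebble travels an induced slide to $r$, and since $r$ separates the components every such slide—and indeed every pebbling move used—stays inside one $G'_i$. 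Hence the solution restricts to independent per-component solutions, delivering at most $R_i$ from $G_i$, and conversely these independent solutions combine freely. Thus $C$ is $t$-fold $r$-solvable if and only if $c_0+\sum_i R_i\ge t$.

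For the lower bound I choose a composition $t_1+\cdots+t_j=t+j-1$ with every $t_i\ge 1$ that maximizes $\sum_i\p_{t_i}(G'_i,r)$, and for each $i$ select a configuration $D_i$ on $V(G_i)$ of size $\p_{t_i}(G'_i,r)-1$ with $R_i\le t_i-1$ and \emph{no pebble on} $r$. Such a $D_i$ exists because a largest configuration on $G'_i$ with $R_i\le t_i-1$ may be taken to avoid $r$: replacing a pebble on $r$ by a pebble on a neighbor inside $G_i$ preserves the size and cannot raise $R_i$, so we may iterate until $r$ is empty. Gluing the $D_i$ yields a configuration with $c_0=0$, size $\sum_i\p_{t_i}(G'_i,r)-j$, and $\sum_iR_i\le\sum_i(t_i-1)=t-1$, hence $t$-unsolvable; this gives $\p_t(G-E_r,r)\ge\max\sum_i\p_{t_i}(G'_i,r)-j+1$. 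For the upper bound, let $C$ be $t$-unsolvable, set $t_i=R_i+1\ge1$, so $C_i$ is not $t_i$-solvable and $|C_i|\le\p_{t_i}(G'_i,r)-1$, whence $|C|\le c_0+\sum_i\p_{t_i}(G'_i,r)-j$ while $\sum_it_i\le(t+j-1)-c_0$. To absorb the $c_0$ term I use the monotonicity $\p_{s+1}(H,r)\ge\p_s(H,r)+1$ (add one pebble on $r$ to a worst configuration for $s$; it delivers at most one extra pebble, so remains $(s{+}1)$-unsolvable): each unit increase of some $t_i$ raises $\sum_i\p_{t_i}(G'_i,r)$ by at least one, so I may inflate the $t_i$ to a composition of $t+j-1$ while increasing the sum by at least $c_0$. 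This gives $|C|\le\max\sum_i\p_{t_i}(G'_i,r)-j$, matching the lower bound.

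\textbf{The main obstacle} is the careful bookkeeping at the shared vertex $r$: cleanly justifying additivity of the $R_i$ from the No-Cycle and Tidy Lemmas, and, on both sides of the argument, disposing of pebbles sitting on $r$—via the ``push a pebble off $r$'' reduction in the lower bound and the monotonicity-based absorption of $c_0$ in the upper bound. I also note that the maximum must range over compositions with all parts positive, $t_i\ge 1$ (equivalently, distributions of $t-1$ among the $j$ components); allowing a part $t_i=0$ would overcount, since a worst configuration never sacrifices a whole component to no purpose.
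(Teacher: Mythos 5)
Your proof is correct, and its skeleton matches the paper's: reduce to $G-E_r$ by the Edge Removal Lemma, get the lower bound by gluing component-wise extremal unsolvable configurations, and get the upper bound by restricting an unsolvable configuration to the $G'_i$ and defining component targets $t_i$ from those restrictions. The genuine difference is how the upper bound is finished. The paper takes $C$ of \emph{maximum} size among $t$-fold $r$-unsolvable configurations, declares without loss of generality that $C(r)=0$, and then uses maximality of $|C|$ plus independence of the restrictions to force $\sum_i(t_i-1)=t-1$ and $|C_i|=\p_{t_i}(G'_i,r)-1$ exactly; you instead carry $c_0=C(r)$ along and absorb it via the monotonicity $\p_{s+1}(H,r)\ge \p_s(H,r)+1$, inflating your composition up to $t+j-1$. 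Your route needs one extra (easy) lemma but avoids the paper's unjustified WLOG, and you also make explicit two facts the paper leaves implicit: the additivity of deliverability across the components (the paper's parenthetical ``no $C_i$ can affect another $C_{i'}$''), and the requirement that the maximum range over compositions with every $t_i\ge 1$ (allowing $t_i=0$ really can inflate the right-hand side, e.g.\ a long leg plus a pendant edge at $r$). One slip worth repairing: acyclicity alone does not forbid moves off $r$ --- a pebble can leave $r$ and never return without creating a directed cycle --- so your parenthetical justification is not literally valid; the correct argument is that in a solution delivering the maximum number of pebbles to $r$, any move $r\peb v$ together with all of its descendant moves (none of which re-enters $r$, by acyclicity) can be deleted, leaving two extra pebbles on $r$. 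That fix is local and does not affect the rest of your argument.
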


\begin{proof}
We begin by letting $r$ and $E_r$ be as described in the hypothesis, namely, that $G-r-E_r$ has components $G_1,\ldots,G_j$, for some $j>1$.
By the Edge Removal Lemma \ref{l:EdgeRemoval}, $\p_t(G,r)=\p_t(G-E_r,r)$.

For any positive $t_1,\ldots,t_j$ such that $\sum_{i=1}^j t_i=t+j-1$ (i.e. $\sum_{i=1}^j (t_i-1)=t-1$), there exist $t_i$-fold $r$-unsolvable configurations $C_i$ on $G'_i$ of size $\p_{t_i}(G'_i,r)-1$.
The union $C=\cup_{i=1}^j C_i$ is therefore $t$-fold $r$-unsolvable, showing that $\p_t(G-E_r,r)\ge 1+\sum_{i=1}^j (\p_{t_i}(G'_i,r)-1) = \sum_{i=1}^j \p_{t_i}(G'_i,r)-j+1$.
Hence \[\p_t(G-E_r,r)\ge \max_{\sum_{i=1}^j t_i=t+j-1} \sum_{i=1}^j \p_{t_i}(G'_i,r)-j+1.\]

Let $C$ be a configuration of maximum size on $G$ that is not $t$-fold $r$-solvable, and let $C_i$ denote the restriction of $C$ to $G'_i$.
Without loss of generality $C(r)=0$, and so $|C|=\sum_{i=1}^j |C_i|$.
For each $i$, %let 
$C_i$ %be 
is $(t_i-1)$-fold but not $t_i$-fold $r$-solvable on $G'_i$; thus $|C_i|\le \p_{t_i}(G'_i,r)-1$ and $\sum_{i=1}^j (t_i-1)\le t-1$ (i.e. $\sum_{i=1}^j t_i\le t+j-1$).

Now the maximality of $|C|$ (and independence of $\{C_i\}$; that is, $G'_i$ and $G'_{i'}$ intersect only on $r$ and so no $C_i$ can affect another $C_{i'}$) implies that $\sum_{i=1}^j (t_i-1)=t-1$ and $|C_i| = \p_{t_i}(G'_i,r)-1$.
Hence $|C|=\sum_{i=1}^j (\p_{t_i}(G'_i,r)-1)$ and consequently \[\p_t(G-E_r,r)\le \max_{\sum_{i=1}^j t_i=t+j-1} \sum_{i=1}^j \p_{t_i}(G'_i,r)-j+1.\]\
This finishes the proof.
\end{proof}

Because every interior vertex of $\Pnk$ satisfies the hypothesis of Theorem \ref{t:internal}, we obtain the following corollary.

\begin{cor}
\label{c:internal}
Let $r=v_i$ be an interior vertex of $\Pnk$.
By removing the edges $v_hv_j$ between vertices of $N(r)$ 
%that are in different components of $\Pnk-r$, 
with $h<i<j$,
we obtain two graphs $P_{n_1}^{(k)}$ and $P_{n_2}^{(k)}$, with $n_1+n_2=n+1$, that are joined at $r$, which is simplicial in both of them.
Then \[\p_t(\Pnk,r)= \max_{t_1+t_2=t+1} \p_{t_1}(P_{n_1}^{(k)},r)+\p_{t_2}(P_{n_2}^{(k)},r)-1.\]
\end{cor}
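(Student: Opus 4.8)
The plan is to recognize this as the special case $j=2$ of Theorem \ref{t:internal}, so that essentially all the work consists of checking that the combinatorial set-up described in the statement matches the hypotheses of that theorem, after which the formula is read off directly. First I would fix an interior vertex $r=v_i$ (so $2\le i\le n-1$, whence the index blocks on either side of $i$ are nonempty) and let $E_r$ be the set of all edges $v_hv_j$ of $\Pnk$ with $h<i<j$. To apply Theorem \ref{t:internal} I must confirm that $E_r$ really is a set of edges between neighbors of $r$, and that deleting $r$ together with $E_r$ produces exactly two components.

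The key observation is that $E_r$ is precisely the set of all edges of $\Pnk$ that ``cross'' the index $i$. Indeed, if $v_av_b$ is an edge with $a<i<b$, then $1\le|a-b|\le k$ forces $(i-a)+(b-i)=b-a\le k$ with both summands at least $1$; hence $i-a\le k$ and $b-i\le k$, so both $v_a$ and $v_b$ lie in $N(r)$ and the edge belongs to $E_r$. This simultaneously shows that every edge of $E_r$ joins two neighbors of $r$ (as required by the theorem) and that, after deleting $r$ and $E_r$, no edge remains between the left block $\{v_1,\ldots,v_{i-1}\}$ and the right block $\{v_{i+1},\ldots,v_n\}$. Each block induces a connected power of a path, so $\Pnk-r-E_r$ has exactly the two components $G_1$ and $G_2$ demanded by the $j=2$ case.

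Next I would identify the augmented graphs $G_1'=G[V(G_1)\cup\{r\}]$ and $G_2'=G[V(G_2)\cup\{r\}]$. Since adjacency in $\Pnk$ depends only on index differences, $G_1'$ is isomorphic to $P_{n_1}^{(k)}$ on the consecutive indices $1,\ldots,i$ with $n_1=i$, and $G_2'$ is isomorphic to $P_{n_2}^{(k)}$ on the indices $i,\ldots,n$ with $n_2=n-i+1$; thus $n_1+n_2=n+1$. In each graph $r=v_i$ is an endpoint of the consecutive index range, and its neighborhood there consists of at most $k$ consecutive vertices, whose mutual index differences are all at most $k-1<k$; hence that neighborhood is a clique and $r$ is simplicial in both $G_1'$ and $G_2'$, exactly as the statement asserts.

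Finally I would invoke Theorem \ref{t:internal} with $j=2$, which yields $\p_t(\Pnk,r)=\max_{t_1+t_2=t+1}\p_{t_1}(G_1',r)+\p_{t_2}(G_2',r)-1$; substituting $G_1'=P_{n_1}^{(k)}$ and $G_2'=P_{n_2}^{(k)}$ gives the claimed identity. I do not expect any genuine obstacle here: the only point requiring care is the triangle-inequality count above, which simultaneously guarantees that $E_r$ lies between neighbors of $r$ (so the Edge Removal Lemma invoked inside Theorem \ref{t:internal} applies) and that $E_r$ captures \emph{all} crossing edges (so that its removal truly separates the two blocks). Everything else is a routine specialization of the already-established cutting theorem.
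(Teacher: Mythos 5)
Your proposal is correct and follows exactly the paper's route: the paper derives Corollary \ref{c:internal} as the $j=2$ specialization of Theorem \ref{t:internal}, noting only that every interior vertex of $\Pnk$ satisfies its hypotheses. Your careful verification that $E_r$ consists precisely of the crossing edges (hence lies between neighbors of $r$ and disconnects the two blocks) and that $r$ is simplicial in both pieces is just the detailed check the paper leaves implicit.
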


%%%%%%%%%%%%%%%%%%%%%%%%%%%%%%%%%%%%%%%%%%%%%%%
%       SUBSECTION CHORDAL
%%%%%%%%%%%%%%%%%%%%%%%%%%%%%%%%%%%%%%%%%%%%%%%
\subsection{Chordal Lemmas}
\label{ss:chordal}

\begin{lem}
\label{l:chordal} 
Let $r$, $v$ and $u$ be three vertices of a chordal graph $G$. 
If $v$ belongs to an induced path between $u$ and $r$, then any path between $u$ and $r$ contains either $v$ or a neighbor of $v$. 
Accordingly, $N[v]\setminus \{r,u\}$ separates $r$ from $u$.
\end{lem}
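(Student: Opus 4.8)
The plan is to argue by contradiction, producing a chordless cycle through $v$ and thereby violating chordality. Write the induced $u$--$r$ path as $P\colon u=x_0,x_1,\dots,x_m=r$ with $v=x_i$ for some $0<i<m$ (the cases $v=u$ or $v=r$ being degenerate), and suppose some $u$--$r$ path $Q$ meets $N[v]$ only, at most, in its endpoints $u$ and $r$; I must reach a contradiction, since this is exactly the failure of the separator conclusion. The first assertion is the special case in which $u,r\notin N[v]$, where $N[v]\setminus\{u,r\}=N[v]$ and ``meets $N[v]$ only at endpoints'' degenerates to ``avoids $N[v]$''.

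First I would pass to the induced subgraph $H=G[V(P)\cup V(Q)]$, which is again chordal. The key observation is that $v$ has exactly two neighbours in $H$, namely $x_{i-1}$ and $x_{i+1}$: among the vertices of $P$ this holds because $P$ is induced, and among the vertices of $Q$ it holds because $Q$ avoids $N(v)\setminus\{u,r\}$, while any incidence $u\in N(v)$ forces $u=x_{i-1}$ and any incidence $r\in N(v)$ forces $r=x_{i+1}$ (on an induced path the endpoint $x_0$, resp.\ $x_m$, is adjacent to $x_i$ only when $i=1$, resp.\ $i=m-1$). Hence $N_H(v)=\{x_{i-1},x_{i+1}\}$.

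Next I would check that $v$ lies on a cycle of $H$. Indeed $x_{i-1}$ and $x_{i+1}$ are joined in $H-v$ by concatenating the segment $x_{i-1},\dots,x_0$ of $P$, the path $Q$ from $u$ to $r$ (which omits $v$), and the segment $x_m,\dots,x_{i+1}$ of $P$; extracting a simple $x_{i-1}$--$x_{i+1}$ path from this walk and adjoining $v$ yields a cycle through $v$. Let $C^\star$ be a shortest cycle of $H$ through $v$. Its two edges at $v$ run to distinct neighbours of $v$, so they are $vx_{i-1}$ and $vx_{i+1}$. If $C^\star$ had length at least $4$ it would, by chordality, possess a chord; since $v$ has no neighbour in $H$ other than its two cycle-neighbours, $v$ cannot be an endpoint of that chord, so the chord splits $C^\star$ into two strictly shorter cycles exactly one of which contains $v$---contradicting minimality. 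Therefore $C^\star$ is a triangle on $v,x_{i-1},x_{i+1}$, forcing the edge $x_{i-1}x_{i+1}$ and contradicting that $P$ is induced.

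The main obstacle is this last step: guaranteeing that a shortest cycle through $v$ collapses to a triangle whose third edge is forced. The crux is the degree-two observation $N_H(v)=\{x_{i-1},x_{i+1}\}$, which simultaneously prevents $v$ from being incident to any chord and pins down the third edge of the triangle. The only delicate bookkeeping is the case where an endpoint of $Q$ happens to lie in $N(v)$, which I handle by the remark that such an endpoint must coincide with $x_{i-1}$ or $x_{i+1}$; this same remark is what allows the single argument to deliver the stronger separator statement rather than merely the first assertion.
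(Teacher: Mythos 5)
Your proof is correct, but it takes a genuinely different route from the paper's. The paper's argument is a two-line appeal to separator theory: since $v$ is an interior vertex of an induced $u$--$r$ path, $v$ lies in some minimal $u$--$r$ separator $S$; by Dirac's theorem, minimal vertex separators of chordal graphs are cliques, so $S\subseteq N[v]\setminus\{u,r\}$, and every $u$--$r$ path must meet $S$. You instead work straight from the definition of chordality: from a hypothetical path $Q$ avoiding $N[v]\setminus\{u,r\}$ you build a cycle through $v$ inside the chordal graph $H=G[V(P)\cup V(Q)]$, use the degree-two observation $N_H(v)=\{x_{i-1},x_{i+1}\}$ to rule out chords incident to $v$, and collapse a shortest cycle through $v$ to a triangle, forcing the edge $x_{i-1}x_{i+1}$ and contradicting that $P$ is induced. (A slight shortcut in the same spirit: take a shortest, hence induced, $x_{i-1}$--$x_{i+1}$ path in $H-v$ and close it up through $v$; the resulting cycle is chordless by your degree-two observation, so chordality forces it to be a triangle immediately, with no need for the minimality-of-cycles exchange argument.) What each approach buys: the paper's proof is much shorter but silently relies on the nontrivial fact that an interior vertex of an induced $u$--$r$ path belongs to some minimal $u$--$r$ separator, plus Dirac's clique-separator theorem; yours is self-contained and elementary, at the cost of more bookkeeping, which you handle correctly --- in particular the endpoint incidences $u\in N(v)$ or $r\in N(v)$, which is exactly what makes your argument yield the stronger separator statement and not just the first assertion.
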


\begin{proof}
Since $v$ belongs to an induced path between $r$ and $u$, there is a minimal $r$-$u$-separator containing $v$. 
Since minimal separators in chordal graphs are cliques, every path between $r$ and $u$ contains either $v$ or a neighbor of $v$.
\end{proof}

\begin{lem}
\label{l:Greedy}
Chordal graphs are semi-greedy.
\end{lem}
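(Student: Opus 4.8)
The plan is to combine the No-Cycle and Tidy Lemmas with a single geometric fact about chordal graphs that follows from Lemma \ref{l:chordal}. Fix a target $r$ and a configuration $C$ with $|C|\ge\pi(G,r)$, so that $C$ is $r$-solvable. I would take a \emph{minimum} $r$-solution $\cS$ (one using the fewest pebbling steps); by the No-Cycle Lemma \ref{l:NoCycle} we may assume $G(\cS)$ is acyclic, and by the Tidy Lemma \ref{l:Tidy} we may assume $\cS$ is tidy, so that each pebble delivered to $r$ travels along an \emph{induced} slide from its origin to $r$. The heart of the argument is to show that moving a pebble along such an induced slide never increases its distance to $r$, and then that the remaining (fuel) steps can likewise be taken non-increasing.

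First I would isolate the following claim: if $x_0,x_1,\ldots,x_m=r$ is an induced path in the chordal graph $G$, then $\dist(x_{i+1},r)\le\dist(x_i,r)$ for every $i$; that is, distance to $r$ is non-increasing as one proceeds along the path toward $r$. To prove it, note that the suffix $x_i,x_{i+1},\ldots,x_m=r$ is itself an induced path from $x_i$ to $r$ through $x_{i+1}$, so Lemma \ref{l:chordal} (applied with $v=x_{i+1}$ and $u=x_i$) tells us that $N[x_{i+1}]\setminus\{r,x_i\}$ separates $r$ from $x_i$. Hence any shortest $x_i$-$r$ path $x_i=s_0,s_1,\ldots,s_D=r$, where $D=\dist(x_i,r)$, must meet this separator at some vertex $s_l$ with $1\le l\le D-1$ and $s_l\in N[x_{i+1}]$. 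Then $\dist(x_{i+1},r)\le\dist(x_{i+1},s_l)+\dist(s_l,r)\le 1+(D-l)\le D$, which is exactly the claim. In particular every induced slide is semigreedy, and this is the only place where chordality (through Lemma \ref{l:chordal}) is used.

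It then remains to handle the steps of $\cS$ that do not lie on a target pebble's slide, namely the fuel steps gathered in $\mfs$. Here I would argue by minimality: if some step moved a pebble from $a$ to $b$ with $\dist(b,r)=\dist(a,r)+1$, then following the acyclic structure of $G(\cS)$ the pebble placed on $b$ is eventually consumed on its way to $r$, so its physical trajectory is a walk $a,b,\ldots,r$ that detours away from $r$; rerouting this pebble from $a$ directly toward $r$ along a shortest (hence, by the claim, non-increasing) path should produce a solution with no more steps and strictly fewer distance-increasing moves, contradicting the choice of $\cS$. The main obstacle is exactly this last step: the fuel pebble interacts with other pebbles that merge into it along the acyclic solution, so the rerouting must be performed without destroying solvability or reintroducing a cycle. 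I expect the cleanest way to discharge it is by induction on the number of pebbling steps, repeatedly peeling off one induced slide via the Tidy Lemma \ref{l:Tidy} and applying the non-increasing claim to each resulting slide, rather than by an ad hoc exchange argument.
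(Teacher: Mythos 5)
Your proposal follows the paper's proof in its essentials: both pass to a minimum solution, tidy it via Lemmas \ref{l:NoCycle} and \ref{l:Tidy}, and then use Lemma \ref{l:chordal} to show that a step along an induced slide ending at $r$ cannot increase distance to $r$. Your derivation of that key fact is correct and, if anything, more careful than the paper's: you apply Lemma \ref{l:chordal} with $v=x_{i+1}$ and $u=x_i$, for which the required induced path is literally the suffix of the slide, and you finish with the triangle inequality through the separator $N[x_{i+1}]\setminus\{r,x_i\}$. The paper instead applies the lemma around the nearer vertex $v_i$, concluding that the path $v_{i+1},\ldots,v_h=r$ has an interior vertex adjacent to $v_i$, contradicting inducedness; for that application one must first exhibit an induced $(v_{i+1},r)$-path through $v_i$ (say, by prepending $v_{i+1}$ to a geodesic from $v_i$ and using $\dist(v_i,r)<\dist(v_{i+1},r)$ to rule out chords), a point the paper passes over silently. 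Your variant avoids needing that auxiliary construction.

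Regarding the obstacle you flag --- the steps of the residual subsequence $\mfs$ that lie on no slide --- you have not overlooked a resolution hidden in the paper: the paper's proof never addresses those steps at all. After tidying, it simply asserts that a non-semi-greedy minimum solution ``can be assumed'' to end with a slide containing the distance-increasing step; but tidying only yields the form $\mfs,\mfs_1,\ldots,\mfs_t$ with induced slides, and it does not relocate a violating step of $\mfs$ onto a slide. So the rigorous portion of your argument coincides with (and slightly repairs) the paper's, while the portion you leave open is left open by the paper as well. Do note that your proposed repair by ``peeling off one slide and inducting'' is not automatic: the fuel moves are solutions whose targets are the slide's interior vertices rather than $r$, so your non-increasing claim (which is about induced paths ending at $r$) does not apply to them directly; closing this gap genuinely requires an argument that appears in neither write-up.
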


\begin{proof}
Let $C$ be an $r$-solvable configuration on a chordal graph $G$.
Suppose that $\cS$ is a minimum $r$-solution and that $\cS$ is not semi-greedy.
Then, after a tidy rearrangement of $\cS$ if necessary (by the Tidy Lemma \ref{l:Tidy}), we can assume $\cS$ ends by moving a pebble $p$ along an \emph{induced} slide $Q:(u=v_1,v_2,\ldots, v_h=r)$ where  $\dist(v_i,r)< \dist(v_{i+1},r)$ for some $i\in\{1,\ldots,h-2\}$.
By Lemma \ref{l:chordal}, and the fact that $i+1<h-1$ (else $\dist(v_i,r)<\dist(v_{i+1},r)=1$, which would imply that $v_i=r$),
the path  $v_{i+1},v_{i+2}, \ldots, v_h=r$ has an interior vertex adjacent to $v_i$, in contradiction with the fact that $Q$ is an induced slide. 
\end{proof}

\begin{lem}
\label{l:C'} 
Let $v$ be a potential vertex of a  configuration $C$ on a graph $G$. 
Let $u$ be any other vertex  which is separated from $r$ by $N[v]\setminus \{r,u\}$. 
Let $C'$  be the same configuration as $C$ except that the potential vertex $v$ is changed to $u$; that is, $C'(v)=C(v)-2$, $C'(u)=C(u)+2$, and $C'(x)=C(x)$ otherwise. 
If $C$ is $t$-fold $r$-unsolvable, then $C'$ is $t$-fold $r$-unsolvable.
\end{lem}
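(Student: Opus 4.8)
The plan is to prove the contrapositive: assuming $C'$ is $t$-fold $r$-solvable, I will build a $t$-fold $r$-solution for $C$. First I would dispose of degenerate cases: if $v=r$ then $C$ carries two more pebbles on the target than $C'$ does, so solvability is immediate; hence assume $v\notin\{r,u\}$, which (together with $v\in N[v]$) places $v$ inside the separator $S:=N[v]\setminus\{r,u\}$. Write $U$ for the vertex set of the component of $G-S$ containing $u$, so that $u\in U$, $v\in S$, $r\notin U$, and every edge leaving $U$ lands in $S$. Note also that every other vertex of $S$ is a neighbour of $v$ (since $S\subseteq N[v]$), so from a pair of pebbles on $v$ a single pebbling step can deposit a pebble on any prescribed vertex of $S$.

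Next I would normalize a solution. Take a minimum $t$-fold $r$-solution $\cS'$ of $C'$ and, by the No-Cycle Lemma \ref{l:NoCycle}, assume $G(\cS')$ is acyclic; using minimality I would argue that $\cS'$ makes no pebbling step from $S$ into $U$ (such a pebble could only rejoin the solution by recrossing $S$ at a net loss). Consequently every pebble that $U$ contributes toward $r$ crosses the cut directly from $U$ into $S$. The heart of the argument is the accounting across this cut. The two configurations differ only in that $C'$ carries two extra pebbles on $u\in U$ while $C$ carries two extra pebbles on $v\in S$. Since $u\notin S$ forces $\dist(u,S)\ge 1$, the two extra pebbles on $u$ can increase the number of pebbles crossing from $U$ into $S$ by at most one (two pebbles perform a single step, hence contribute at most one pebble to any later pair, hence at most one extra crossing), and that extra crossing pebble lands on some vertex $w\in S$.

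To finish, I would replay $\cS'$ from $C$. The common pebbles of $U$ execute exactly the same internal and crossing moves as in $\cS'$; the single crossing that was powered by the two extra $u$-pebbles is instead supplied by moving the pair on $v$ one step onto $w$, which is legitimate because $w\in S\subseteq N[v]$, so $w=v$ or $w\sim v$. Front-loading this $v\to w$ move makes every pebble that $\cS'$ required on the $S\cup R$ side available in $C$ at least as early, so the remainder of $\cS'$ replays verbatim and delivers $t$ pebbles to $r$. Thus $C$ is $t$-fold $r$-solvable, as required.

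The step I expect to be the main obstacle is the cut accounting in the second paragraph: because pebbles on a vertex are interchangeable, I must argue rigorously that deleting the two extra pebbles from $u$ reduces the total flow across $S$ by at most one, and that the lost unit can be routed to the exact crossing vertex $w$ used by $\cS'$. Making this precise—ideally by phrasing the contribution of $U$ as a net flow across the cut and invoking the $2$-to-$1$ nature of pebbling moves together with $\dist(u,S)\ge 1$—is the delicate part, as is justifying that no pebble need ever be driven from $S$ back into $U$; everything else is bookkeeping on the replay.
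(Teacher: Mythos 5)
Your overall plan (contrapositive plus an accounting of what the two extra pebbles on $u$ can contribute across the separator) is a genuinely different route from the paper's, and in principle it would prove the lemma for arbitrary graphs; but as written it has two real gaps, and the first is a step that is simply false. You claim that minimality of $\cS'$ forces it to make no pebbling step from $S$ into $U$, ``since such a pebble could only rejoin the solution by recrossing $S$ at a net loss.'' That is not so: a detour into $U$ is profitable whenever it picks up a stranded odd pebble there. Concretely, take the $4$-cycle $v\,z\,y\,z'$, attach $r$ to $z'$ and a path $y\,y_2\,u$ to $y$; then $S=N[v]\setminus\{r,u\}=\{v,z,z'\}$ separates $u$ from $r$ and $U=\{u,y_2,y\}$. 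For $C'$ with $6$ pebbles on $z$, $1$ on $y$, $2$ on $u$ (so $C'$ even has the form required in your proof), every $r$-solution must use the step $z\peb y$: the pair on $u$ can only deposit one stranded pebble on $y_2$; placing two pebbles on $z'$ entirely via $v$ would need $8$ pebbles on $z$; hence some pebble reaches $z'$ through $y$, and the second pebble needed on $y$ can only come from $z$. So minimum solutions can be forced to cross from $S$ into $U$, and your normalization fails.

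The second gap is the cut accounting itself, which you correctly flag as the crux but do not close --- and it is exactly the content of the lemma. The sketch ``two pebbles perform a single step, hence at most one extra crossing'' breaks when each of the two extra pebbles on $u$ is paired with a \emph{different} original pebble of $C(u)$: then they power two distinct launches from $u$ and threaten two lost crossings. The repair needs two ideas absent from the proposal: (i) the two orphaned partners both sit on $u$, so in $C$ they can be re-paired with each other, restoring one of the two launches, whence at most one derivation is lost; and (ii) the lost derivation's first move may land inside $U$, not in $S$, so the ``lost unit'' cannot always be planted one step from $v$ at the crossing vertex you name; instead one must take an acyclic decomposition of $\cS'$ (No-Cycle Lemma \ref{l:NoCycle}), follow the leaf-to-root path of the lost derivation until it first meets $S$, and plant the pebble from $v$'s pair at that vertex, which lies in $S\subseteq N[v]$. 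With (i) and (ii), and with the false normalization discarded (the tree decomposition handles bidirectional traffic across $S$), your approach does work and, unlike the paper's, uses no chordality. The paper avoids all of this: since the lemma is used for chordal $G$, it invokes semi-greediness (Lemma \ref{l:Greedy}) and the Tidy Lemma \ref{l:Tidy} to assume the solution of $C'$ ends by sliding one pebble from $u$ to $r$ along an induced slide $Q$; the separator $N[v]\setminus\{r,u\}$ must meet the interior of $Q$ at some $x\in N[v]$, and the pair on $v$ launches a pebble onto $x$ and rides the tail of $Q$ to $r$ --- no flow accounting at all.
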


\begin{proof} 
Suppose otherwise that $C'$ is $t$-fold $r$-solvable and let $\cS$ be a minimum $t$-fold $r$-solution; by Lemma \ref{l:Greedy}, $\cS$ is semi-greedy.
We can assume that $\cS$ takes a pebble $p$ from $u$ to $r$.
Therefore, by a tidy rearrangement if necessary (by the Tidy Lemma \ref{l:Tidy}), we can also  assume that $\cS$ ends with a subsequence $\mfs_p$ moving the pebble $p$ along an slide $Q$ between $u$ and $r$. 
Since $N[v]\setminus \{r,u\}$ separates $u$ from $r$, there exists an interior vertex of $Q$, say $x$, that belongs to $N[v]$. 
This implies that replacing in $\cS$ the subsequence  $\mfs_p$ by the movement of the pebble $p$ from $v$ to $r$ along the part of the slide $Q$ between $x$ and $r$, we obtain a semi-greedy $t$-fold $r$-solution of the original configuration $C$, a contradiction.
\end{proof}

The following theorem is not used in this paper, but is likely to be useful in future work.

\begin{thm}
\label{t:Simplicial}
If $G$ is chordal and $C$ is $t$-fold $r$-unsolvable then there is a $t$-fold $r$-unsolvable configuration $C'$ with $|C'|=|C|$ and every potential vertex is simplicial.
\end{thm}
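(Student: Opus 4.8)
The plan is to prove this by an extremal (weight‑maximizing) argument powered by the two engines already established: the separation property of Lemma \ref{l:chordal} and the potential‑relocation operation of Lemma \ref{l:C'}. Fix a weight $\phi:V(G)\to\zZ$ and, among all $t$-fold $r$-unsolvable configurations of size $|C|$ (a finite, nonempty family, since $C$ itself qualifies), choose one, $C'$, maximizing $\Phi(C')=\sum_v C'(v)\phi(v)$. Suppose for contradiction that $C'$ has a non‑simplicial potential vertex $v$. Since $v$ is non‑simplicial, $N(v)$ contains two non‑adjacent vertices. The goal is to produce a vertex $u$ that is separated from $r$ by $N[v]\setminus\{r,u\}$ (so Lemma \ref{l:C'} applies) and satisfies $\phi(u)>\phi(v)$: moving a potential pair from $v$ to $u$ then yields, by Lemma \ref{l:C'}, a $t$-fold $r$-unsolvable configuration $C''$ with $|C''|=|C'|$ and $\Phi(C'')=\Phi(C')+2(\phi(u)-\phi(v))>\Phi(C')$, contradicting maximality. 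Hence the maximizer $C'$ is the configuration we want.

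For the weight I would take $\phi(v)=-\iota(v)$, where $\iota$ is the position of $v$ in a perfect elimination ordering $w_1,\ldots,w_n=r$ of $G$ with $r$ eliminated last (obtained, for instance, by reversing a Lex‑BFS started at $r$); thus $\phi$ rewards pushing pebbles toward earlier‑eliminated, ``more simplicial'' vertices. The reason this weight is natural is that the simpliciality of $w_{\iota(v)}$ in the graph induced by $\{w_{\iota(v)},\ldots,w_n\}$ forces all higher‑indexed neighbors of $v$ to form a clique; consequently the two non‑adjacent neighbors of $v$ cannot both be higher‑indexed, so at least one non‑adjacent neighbor $a$ of $v$ has $\iota(a)<\iota(v)$, giving a natural candidate target with $\phi(a)>\phi(v)$. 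The target vertex $r$ itself is handled by first pushing pairs off it: for $v=r$ and any non‑neighbor $u$, the set $N[r]\setminus\{r,u\}=N(r)$ isolates $r$, so Lemma \ref{l:C'} lets us reduce to $C'(r)\le 1$ while preserving size; since $r$ carries the least weight we expect $C'(r)=0$ in the maximizer, and the degenerate case of a single residual pebble on a non‑simplicial target can be treated directly.

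The step I expect to be the main obstacle is verifying that a genuinely \emph{valid} target of strictly larger weight always exists, i.e.\ that the candidate $a$ (or some other lower‑indexed vertex) is truly separated from $r$ by $N[v]\setminus\{r,a\}$. Two subtleties must be overcome. First, a distance‑based weight cannot work: a non‑simplicial $v$ may be a ``peak'' with no neighbor farther from $r$ than itself; for example, in the chordal graph on $\{r,c,p,q,v\}$ with $c$ universal and $p\sim v$, $q\sim v$, $p\not\sim q$, the vertex $v$ and its neighbor $p$ are equidistant from $r$ yet $p$ is a perfectly legitimate separated target. This is precisely why the elimination‑order weight, rather than $\dist(\cdot,r)$, is used. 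Second, the candidate $a$ fails to be separated exactly when $a$ has a ``back‑route'' to $r$ avoiding $N[v]$, and ruling this out is where chordality re‑enters through Lemma \ref{l:chordal} and the fact that minimal $a$--$r$ separators are cliques: one shows that such a back‑route would produce a chord contradicting either the non‑adjacency of $a$ with another neighbor of $v$ or the minimality of $\iota(a)$. Closing this separation argument cleanly — choosing, if necessary, the lowest‑indexed vertex on $v$'s far side rather than $a$ itself, and confirming it remains a valid push‑target of strictly greater weight — is the technical heart of the proof.
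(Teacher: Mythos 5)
Your extremal framework---maximize $\Phi$ over unsolvable configurations of the given size, then use Lemma \ref{l:C'} to push a pair from a non-simplicial potential vertex to a valid target of strictly larger weight---is a legitimate alternative skeleton, and your instinct about where the difficulty sits is correct. But the proposal does not close that difficulty: the existence of a valid target preceding $v$ in the elimination order is exactly the statement you defer as ``the technical heart,'' so as written there is no proof. For $v\not\sim r$ the claim can indeed be rescued (the two non-adjacent neighbors $a,b$ of $v$ give a minimal $a$--$b$ separator $S\ni v$, which is a clique inside $N[v]$; the component of $G-S$ missing $r$ consists of valid targets, and the classical perfect-elimination lemma --- if a path joins $x$ and $y$ through vertices of index smaller than both, then $x\sim y$ --- forces some valid target to precede $v$), but this is real work you have not done. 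The paper avoids orderings and extremal arguments entirely: it notes that the far component contains a vertex $u$ that is simplicial \emph{in $G$} (Dirac: two non-adjacent simplicial vertices, and at most one can lie in the clique $S$), moves the pair there by Lemma \ref{l:C'}, and is done, since pairs now land only on simplicial vertices and each relocation strictly decreases the number of pebbles on non-simplicial vertices.

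The genuine gap is the case of a non-simplicial potential vertex $v$ \emph{adjacent to} $r$, which your proposal never treats (you handle only $v=r$). There your key claim is not just unproven but false: no valid target need exist at all. Take $G=K_4$ minus the edge $ab$, with $V=\{r,v,a,b\}$; then $v$ is non-simplicial ($a\not\sim b$) and adjacent to $r$, yet for $u=a$ the set $N[v]\setminus\{r,a\}=\{v,b\}$ does not separate $a$ from $r$ (the edge $ar$ is present), and symmetrically for $u=b$; there are no other candidates. So Lemma \ref{l:C'} can never be applied to such a $v$, and your maximality argument has no mechanism to rule out a pair sitting on it---the proof structure breaks regardless of whether maximizers happen to avoid this in concrete examples. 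This is precisely why the paper splits off ``$v$ adjacent to $r$'' as a separate case: handling it needs a different observation (a pair on a neighbor of $r$ is worth exactly one pebble on $r$, so it can be relocated to any vertex, in particular a simplicial one, without creating solvability), which lies outside the two lemmas your argument is built on.
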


\begin{proof}
Suppose that $C$ is $t$-fold $r$-unsolvable and has a non-simplicial potential vertex $v$. 
If $v$ is adjacent to $r$, the proof is trivial. 
Otherwise, there exists a simplicial vertex $u$ such that $N[v]\setminus \{r,u\} $ separates $u$ from $r$. 
Therefore, the proof follows  by Lemma \ref{l:C'}.
\end{proof}

%%%%%%%%%%%%%%%%%%%%%%%%%%%%%%%%%%%%%%%%%%%%%%%
%       SUBSECTION TREE FORMULA
%%%%%%%%%%%%%%%%%%%%%%%%%%%%%%%%%%%%%%%%%%%%%%%
\subsection{Pebbling Number for Trees}
\label{ss:trees}

We will make use of the following well known theorem of Chung \cite{Chung} on the $t$-fold pebbling number of a tree with target vertex $r$, using the notion of a {\it maximum $r$-path partition} $\cP$.
One can compute such a $\cP$ iteratively as follows.
Beginning with $H=T$, $W=\{r\}$, and $\cP=\mt$, we choose a longest path $P$ in $H$ having one endpoint in $W$. 
Then we add $P$ to $\cP$, add its vertices to $W$, remove its edges from $H$, and repeat.
The construction yields $\cP=\{P_1,\ldots,P_\ell\}$, with subscripts signifying the order of inclusion in $\cP$, and with $\len(P_i)\ge\len(P_{i+1})$.
Note that the number of leaves of $T$ equals $\ell+1$.
In general, we write $\cP(T,r)$ for the maximum $r$-path partition of $T$.

\begin{thm}
\cite{Chung}
\label{t:TreeFormula}
Let $T$ be a tree with maximum $r$-path partition $\cP=\{P_1,\ldots,P_\ell\}$, with each $P_i$ having length $a_i$.
Then $\p_t(T,r) = (t2^{a_1}-1) + \sum^\ell_{i=2}(2^{a_i}-1) + 1 = t2^{a_1} +\sum^\ell_{i=2} 2^{a_i}-\ell+1$.
\end{thm}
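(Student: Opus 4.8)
The plan is to recast the statement as an extremal problem about a single ``consolidation value'' and then solve that problem by peeling the path partition. Root $T$ at $r$ and, for a configuration $C$, let $\mathrm{val}(C)$ be the maximum number of pebbles that can be delivered to $r$. Using the No-Cycle Lemma~\ref{l:NoCycle} and the Tidy Lemma~\ref{l:Tidy}, one checks that in a tree this maximum is computed by a bottom-up consolidation: along any pendant path of length $a$ ending at a vertex $x$, a sub-configuration delivers exactly $\lfloor\sum_v C(v)2^{-\dist(v,x)}\rfloor$ pebbles to $x$ (the weight bound is tight on a path by a carrying argument), and these partial deliveries combine by the same rounding rule up the tree, since distinct subtrees meet only at their common root. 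In particular $C$ is $t$-fold $r$-solvable if and only if $\mathrm{val}(C)\ge t$, so that $\p_t(T,r)=1+M(T,t-1)$, where $M(T,\tau)=\max\{|C| : \mathrm{val}(C)\le\tau\}$. It therefore suffices to prove $M(T,t-1)=t2^{a_1}+\sum_{i\ge2}2^{a_i}-\ell$.

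For the lower bound $M(T,t-1)\ge t2^{a_1}+\sum_{i\ge2}2^{a_i}-\ell$ I would exhibit the configuration placing $t2^{a_1}-1$ pebbles on the far tip of $P_1$ and $2^{a_i}-1$ pebbles on the far tip of each $P_i$ with $i\ge2$, whose size is exactly the claimed value, and argue $\mathrm{val}\le t-1$ by induction on $\ell$. Peeling the shortest path $P_\ell$ (a pendant path attached at some $x$), its $2^{a_\ell}-1<2^{a_\ell}$ tip pebbles deliver nothing to $x$, and removing its interior leaves the analogous configuration on the tree $T'$ whose partition is $\{P_1,\dots,P_{\ell-1}\}$; by induction that delivers at most $t-1$ to $r$, and (since by No-Cycle no pebble is pushed out onto the pendant in an optimal solution) $P_\ell$ adds nothing. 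The base case $\ell=1$ is the single path, where $t2^{a_1}-1$ pebbles on the far end deliver $\lfloor(t2^{a_1}-1)/2^{a_1}\rfloor=t-1$.

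The substantive direction is the matching upper bound $M(T,\tau)\le(\tau+1)2^{a_1}+\sum_{i\ge2}(2^{a_i}-1)-1$, again by induction on $\ell$ and peeling the shortest pendant path $P_\ell$ at $x$. The trivial inequality $M(T,\tau)\ge M(T',\tau)+(2^{a_\ell}-1)$ comes from dumping $2^{a_\ell}-1$ wasted pebbles on the tip of $P_\ell$. For the reverse inequality one consolidates $P_\ell$ into $x$ to obtain a configuration $C''$ on $T'$ with $\mathrm{val}_{T'}(C'')=\mathrm{val}_T(C)\le\tau$; the difficulty is that $C''$ may carry many pebbles delivered to $x$, so the crude bound $|C''|\le M(T',\tau)$ loses too much. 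The key step is an exchange argument showing that a maximum-size configuration can be chosen so that $P_\ell$ delivers nothing to $x$: any pebble actually routed from $P_\ell$ through $x$ toward $r$ can be rerouted to travel instead along $P_1$, and any pebble merely delivered to and stranded at $x$ can be moved out to the tip of $P_\ell$. Because delivering one pebble to $r$ from distance $\delta$ costs exactly $2^\delta$ pebbles, and every vertex satisfies $\dist(v,r)\le\ecc_T(r)=a_1$ with equality attained along $P_1$, each such rerouting does not decrease $|C|$ while preserving $\mathrm{val}\le\tau$. Once $P_\ell$ delivers nothing, its interior holds at most $2^{a_\ell}-1$ pebbles and $|C|\le M(T',\tau)+(2^{a_\ell}-1)$, closing the induction; unrolling it routes the entire delivered amount $\tau$ through $P_1$ (contributing $(\tau+1)2^{a_1}-1$) and wastes $2^{a_i}-1$ on each remaining path.

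I expect the exchange argument of the last paragraph to be the main obstacle, precisely because the consolidation is governed by floor functions: rerouting a delivered pebble from the short path $P_\ell$ onto the long path $P_1$ must be done so that the rounded-down amounts reaching $r$ are genuinely preserved, rather than merely preserved in the real-valued weight, which forces a careful accounting of the $O(1)$ stranded pebbles at each branch vertex. An alternative that isolates this difficulty at $t=1$ is to first prove $\p_t(T,r)=\p_{t-1}(T,r)+2^{a_1}$ by induction on $t$, using that any solvable configuration admits a single-pebble delivery to $r$ of cost at most $2^{\ecc_T(r)}=2^{a_1}$ (a tree-sharpened form of the Cheap Lemma~\ref{l:CheapLemma}), and then to establish the base case $t=1$ by the optimization above.
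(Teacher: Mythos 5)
The paper itself gives no proof of this statement --- it is imported wholesale from \cite{Chung} --- so your proposal has to stand on its own. Your framework is sound as far as it goes: the bottom-up floor computation of $\mathrm{val}$ is legitimate on trees (greediness plus the No-Cycle Lemma), the lower-bound configuration is the standard one, and the reduction $\p_t(T,r)=\p_{t-1}(T,r)+2^{a_1}$ is fine. But the exchange argument carrying the upper bound is a genuine gap, exactly where you flagged it, and it fails rather than merely needing care. Take $T$ with vertices $r,u,w,z$ and edges $ru,uw,uz$, so $\cP=\{P_1,P_2\}$ with $P_1=(r,u,w)$, $a_1=2$, $P_2=(u,z)$, $a_2=1$, $t=1$, and $\p(T,r)=2^2+(2^1-1)=5$. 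The configuration $C(w)=1$, $C(z)=3$ is a maximum unsolvable configuration in which $P_2$ delivers one pebble to $u$, and that pebble is stranded at $u$: it never reaches $r$. Your first exchange clause does not apply (nothing is routed toward $r$), and your cost accounting ``delivering one pebble to $r$ from distance $\delta$ costs $2^\delta$'' assigns this delivery value zero, so rerouting removes two pebbles from $z$ and adds nothing on $P_1$, strictly shrinking the configuration. Your second clause, ``a pebble stranded at $x$ can be moved out to the tip of $P_\ell$,'' is vacuous here: the pebbles already sit on the tip, and it is precisely their number being at least $2^{a_\ell}$ that creates the delivery. So neither exchange converts this maximum configuration into one with at most $2^{a_\ell}-1=1$ pebble on $z$, and the peeling induction does not close.

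The repair is to compensate \emph{counts at the attachment vertex} $x$, not amounts at $r$: remove $2^{a_\ell}$ pebbles from the tip of $P_\ell$ (the consolidated count at $x$ drops by exactly $1$), and add $2^{m}$ pebbles on the tip of the path $P_{i'}$ that contains $x$, where $m$ is the distance from that tip to $x$; even increments pass through the floor cascade exactly, so the count at $x$, and hence $\mathrm{val}$, is restored precisely, with no case distinction between stranded and delivered pebbles. (In the example: remove two pebbles from $z$ and add two on $w$, producing the other maximum configuration $C(w)=3$, $C(z)=1$.) The size does not decrease exactly when $m\ge a_\ell$, i.e.\ when $a_{i'}\ge \mathrm{pos}(x)+a_\ell$, where $\mathrm{pos}(x)$ is the distance along $P_{i'}$ from its attachment end to $x$. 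That inequality is a structural property of \emph{maximum} path partitions: when $P_{i'}$ was selected, the path running along $P_{i'}$ to $x$ and then continuing along $P_\ell$ was an available competitor, so $P_{i'}$ had to be at least that long. This inequality is the missing ingredient in your sketch; the bound $\dist(v,r)\le a_1$ with rerouting along $P_1$ cannot substitute for it, because pebbles added on the tip of $P_1$ never pass through $x$ when $x\notin P_1$, and compensating at $r$ instead of at $x$ is exactly the step that floor-rounding destroys.
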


%------------------------------------
The pebbling number $\p_t(T)$ is given by choosing $r$ to be a leaf of a longest path of $T$.
We write $\cP(T)$ for the maximum path partition of $T$, equal to $\cP(T,r)$ for this choice of $r$.

%%%%%%%%%%%%%%%%%%%%%%%%%%%%%%%%%%%%%%%%%%%%%%%
%       SUBSECTION PATH POWER
%%%%%%%%%%%%%%%%%%%%%%%%%%%%%%%%%%%%%%%%%%%%%%%
\subsection{Path Power Lemma}
\label{ss:pathpower}

\begin{lem}
\label{l:interior}
%Let $r^*$ be a simplicial vertex of $\Pnk$ and suppose that $r$ is an internal target vertex.
For $1\leq i \leq 2$, let  $n_i\geq 1$ and  $t_i\geq 1$. If $n_1+n_2=n+1$ and  $t_1+t_2=t+1$, then %and $d\le d_1+d_2\le d+1$.
 $p_{t_1}(n_1,k)+p_{t_2}(n_2,k)-1\le p_{t}(n,k)$.
\end{lem}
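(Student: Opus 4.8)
The plan is to treat $p_{t_i}(n_i,k)=\max\{l_{t_i}(n_i,k),w_{t_i}(n_i)\}$ and argue by a short case analysis according to which of the two terms realizes each maximum, reducing everything to elementary inequalities among the quantities $2^{d}$, $2^{d_i}$, $k$, and $t_i$. First I would dispose of the boundary cases $n_1=1$ or $n_2=1$: if, say, $n_1=1$ then $p_{t_1}(1,k)=t_1-1$ and $n_2=n$, so the claim reads $t_1-2+p_{t_2}(n,k)\le p_t(n,k)$ with $t=t_1+t_2-1$. This follows from the monotonicity fact that raising $t$ by one raises both $\ltnk$ and $\wtn$ (hence their maximum) by at least $2$, whence $p_t(n,k)\ge p_{t_2}(n,k)+2(t_1-1)\ge p_{t_2}(n,k)+(t_1-2)$.

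Assume now $n_1,n_2\ge 2$, and write $n_i-2=k(d_i-1)+b_i$ and $n-2=k(d-1)+b$ with $0\le b_i,b<k$. The first key step is the \emph{diameter relation}. Since $n_1+n_2=n+1$ gives $(n-2)=(n_1-2)+(n_2-2)+1$, subtracting residue expressions yields $k(d+1-d_1-d_2)=b_1+b_2+1-b$, whose right side lies in $[2-k,2k-1]$ and is a multiple of $k$; hence either $d=d_1+d_2-1$ with $b=b_1+b_2+1$, or $d=d_1+d_2$ with $b=b_1+b_2+1-k$. The second preliminary is the linear identity $w_{t_1}(n_1)+w_{t_2}(n_2)-1=\wtn$, immediate from $t_1+t_2=t+1$ and $n_1+n_2=n+1$.

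Then I would run the four cases. If both maxima are realized by the $w$-terms, the identity above gives exactly $\wtn\le p_t(n,k)$, and we are done. In every remaining case I compare the left side with $\ltnk$ (the $w$-bound no longer helps, since a long piece has $l_{t_i}\ge w_{t_i}$). For the purely long case I would substitute the diameter relation and reduce $l_{t_1}(n_1,k)+l_{t_2}(n_2,k)-1\le\ltnk$ to a polynomial inequality in $2^{d_1},2^{d_2}$: when $d=d_1+d_2-1$ it reduces, via monotonicity in $t_1,t_2$, to the base case $(2^{d_1}-2)(2^{d_2}-2)\ge 0$, true since $d_i\ge 1$; when $d=d_1+d_2$ it becomes $(t_1+t_2-1)2^{d_1+d_2}-t_1 2^{d_1}-t_2 2^{d_2}+2-k\ge 0$, which I would prove by feeding in the long thresholds $t_i\ge t_0(k,d_i)=k(d_i-1)/(2^{d_i}-2)$ (using merely $t_i\ge 1$ when $k$ is small). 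For the mixed case $l_{t_1}(n_1,k)+w_{t_2}(n_2)-1\le\ltnk$, the only dangerous term is a $-k(d_2-1)$ (or $-kd_2$) coming from $w_{t_2}$; here the long condition on the first piece forces $t_1\ge t_0(k,d_1)$, and the resulting exponential factor $2^{d_1}(2^{d_2-1}-1)$ dominates the linear $k(d_2-1)$ precisely because $2^{m}\ge m+1$. The symmetric case is identical.

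I expect the main obstacle to be the long--long subcase $d=d_1+d_2$ together with the mixed cases, where the residue bookkeeping produces genuine negative constants ($2-k$, $-kd_2$) that are \emph{not} absorbed by $t_i\ge 1$ alone; the crux is that one must invoke the precise thresholds $t_i\ge t_0(k,d_i)$ and then verify that the exponential gain $2^{d_i}/(2^{d_i}-2)>1$ together with $2^{d}\ge d+1$ overwhelms the linear-in-$k$ deficits. Organizing the argument so that the small-$k$ regime (handled by $t_i\ge 1$) and the large-$k$ regime (handled by the thresholds) fit together cleanly is the one piece requiring care.
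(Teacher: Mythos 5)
Your proposal follows the same top-level decomposition as the paper's proof — both-wide, both-long, and mixed cases, after disposing of $n_i=1$ — but the execution inside the hard cases is genuinely different: you pin down the exact dichotomy $d\in\{d_1+d_2-1,\,d_1+d_2\}$ with its residue bookkeeping ($b=b_1+b_2+1$ or $b=b_1+b_2+1-k$), where the paper only uses $d\le d_1+d_2\le d+1$, and you reduce each case to an explicit polynomial inequality. Your both-wide identity is exactly the paper's Case 2, and your reduction of the long--long subcase with $d=d_1+d_2-1$ to $\tfrac12(2^{d_1}-2)(2^{d_2}-2)\ge 0$ (valid for all $t_i\ge 1$, by monotonicity in $t_1,t_2$) is correct and cleaner than the paper's corresponding chain of estimates, which needs a WLOG on $d_1\ge d_2$, special subcases $d\le 2$ and $d_1=d$, and the threshold bound $k\le t2^{d-1}$. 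Your mixed-case dominance argument (threshold on the long piece plus $2^{m}-1\ge m$) is also essentially sound and replaces the paper's ratio-monotonicity manipulation with $\bigl(\tfrac{2^j-2}{j-1}\bigr)$.

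There is, however, one step that fails as literally stated. In the long--long case with $d=d_1+d_2$, your inequality
\begin{equation*}
(t_1+t_2-1)2^{d_1+d_2}-t_12^{d_1}-t_22^{d_2}+2-k\ \ge\ 0
\end{equation*}
is false when $d_1=d_2=1$ and $k>2t$: it then reads $2t-k\ge 0$. This configuration does lie inside your long--long case, because a piece of diameter $1$ is automatically $t_i$-long (the paper sets $t_0=1$ when the diameter is $1$); note that your threshold formula $t_0(k,d_i)=k(d_i-1)/(2^{d_i}-2)$ is $0/0$ there, so ``feeding in the thresholds'' yields nothing for such pieces, and $t_i\ge 1$ does not save you since $k$ is large, not small. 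Your parenthetical claim that ``the $w$-bound no longer helps'' is precisely what goes wrong here: in this configuration $l_{t_i}(n_i,k)=w_{t_i}(n_i)$ for both pieces, so the left side equals $w_{t_1}(n_1)+w_{t_2}(n_2)-1=2t+n-2\le\ptnk$, i.e.\ the wide identity is what rescues the case. The fix is easy and should be made explicit: break ties toward wide, so that ``long'' means strictly long ($l_{t_i}>w_{t_i}$), which forces $d_i\ge 2$ and makes the strict bound $k<t_i(2^{d_i}-2)$ available; one can check that the threshold on a single piece then suffices to close both the $d=d_1+d_2$ long--long case and the mixed cases. With that convention stated, your argument is complete.
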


\begin{proof} Let $d_i$ be the diameter of $P_{n_i}^k$, and $d$ be 
the diameter of $P_n^k$. Observe that $d\le d_1+d_2\le d+1$.
Consider the following three cases.
\begin{enumerate}[listparindent=1.5em]
    \item 
    $t_1\ge t_0(n_1,d_1)$ and $t_2\ge t_0(n_2,d_2)$.

    Suppose, without loss of generality, that $d_1\ge d_2$. If either $d\leq 2$ or $d_1=d$ (which implies $d_2=1$),
    the proof is simple, so assume $d\geq 3$ and $d_1<d$. 
    Because $d_1+d_2\ge d$ we have
    \begin{align*}
        p_{t_1}(n_1,k)+p_{t_2}(n_2,k)-1
        &= l_{t_1}(n_1,k)+l_{t_2}(n_2,k)-1\\
        &= t_12^{d_1}+n_1-2-k(d_1-1)+t_22^{d_2}+n_2-2-k(d_2-1)-1\\
        &\le (t_12^{d_1}+t_22^{d_2})+(n-4)-k(d-2)\\
        &= (t_12^{d_1}+t_22^{d_2})+(n-4)-k(d-1)+k\\
        &\le (t2^{d_1}+2^{d_2})+(n-4)-k(d-1)+k \\
        &\le (t2^{d-1}+2^1)+(n-4)-k(d-1)+k\,\,\, \hbox{ since } d_1<d\\
        &= (t2^{d-1}+k)+(n-2)-k(d-1).
    \end{align*} 
    Note that since $d\ge 3$ we have $d_1\ge d/2 > 1$. 
    Thus, because $t_1\ge t_0(n_1,d_1)$, we know that $k\le t_1(2^{d_1}-2)/(d_1-1)\le t_1(2^{d_1}-2)\le t2^{d-1}$, and so the above amount is at most $t2^d+(n-2)-k(d-1) = \ltnk\le \ptnk$.
    \item 
    $t_1\le t_0(n_1,d_1)$ and $t_2\le t_0(n_2,d_2)$.

    Here we have
    \begin{align*}
        p_{t_1}(n_1,k)+p_{t_2}(n_2,k)-1
        &= w_{t_1}(n_1)+w_{t_2}(n_2)-1\\
        &= 2t_1+n_1-2+2t_2+n_2-2-1\\
        &= 2(t+1)+(n+1)-5\\
        &= 2t+n-2\\
        &= \wtn\\
        &\le p_t(n,k).
    \end{align*}
    \item 
    $t_1> t_0(n_1,d_1)$ and $t_2< t_0(n_2,d_2)$, or $t_1< t_0(n_1,d_1)$ and $t_2> t_0(n_2,d_2)$.

    Without loss of generality, we may assume for former, in which case we have $t_1(2^{d_1}-2)> k(d_1-1)$ and $t_2(2^{d_2}-2)< k(d_2-1)$.
    In particular, each $d_i\ge 2$, and so we will make use of the fact that $\left(\frac{2^{j}-2}{j-1}\right)$ is an increasing function for $j\ge 2$.
    Since $t_1> t_0(n_1,d_1)$, this implies that 
    \begin{equation}
        k< t_1\left(\frac{2^{d_1}-2}{d_1-1}\right) < 
        t\left(\frac{2^d-2}{d-1}\right),
    \label{e:Long}
    \end{equation}
    which means that $\ptnk=\ltnk=t2^d+n-2-k(d-1)$.
    Now we compute
    \begin{align*}
    \ptnk - \left[p_{t_1}(n_1,k)+p_{t_2}(n_2,k)-1\right] - 1
        &= \left[t2^d+(n-2)-k(d-1)\right]\\
        &\ \ - \left[t_12^{d_1}+(n_1-2)-k(d_1-1)+2t_2+(n_2-2)\right] - 1\\
        &= \left[t2^d-k(d-1)\right] - \left[t_12^{d_1}-k(d_1-1)+2t_2\right]\\
        &\ge \left[t(2^d-2)-k(d-1)\right] - \left[t_1(2^{d_1}-2)-k(d_1-1)\right],
    \end{align*}
    which we will show is positive.
    Indeed, because of the second inequality in (\ref{e:Long}) we know that
    \[t\left(\frac{2^d-2}{d-1}\right)\ge t_1\left(\frac{2^{d_1}-2}{d_1-1}\right),\]
    so that
    \[t(2^d-2)-k(d-1)\ge t_1(2^{d_1}-2)\left(\frac{d-1}{d_1-1}\right)-k(d-1),\]
    which we will show is greater than $t_1(2^{d_1}-2)-k(d_1-1)$ as follows, using the first inequality of (\ref{e:Long}) in the final step.
    \[\left[t_1(2^{d_1}-2)\left(\frac{d-1}{d_1-1}\right)-k(d-1)\right] - \left[t_1(2^{d_1}-2)-k(d_1-1)\right]\qquad\qquad\qquad\qquad\qquad\]
    \vspace{-0.2in}
    \begin{align*}
        \qquad\qquad\qquad\qquad\qquad\qquad
        &= t_1(2^{d_1}-2)\left[\frac{d-1}{d_1-1}-1\right]-k(d-d_1)\\
        &= \left[t_1\left(\frac{2^{d_1}-2}{d_1-1}\right)-k\right](d-d_1)\\
        &= \left[t_1(2^{d_1}-2)-k(d_1-1)\right] \left(\frac{d-d_1}{d_1-1}\right)\\
        & >0.
    \end{align*}
\end{enumerate}
This completes the proof.
\end{proof}

\begin{comment}
\gh{(Remove the rest of this section?)}
We say that $D$ {\it precedes} $C$ if $j< i$ for all $v_j\in\dD$ and $v_i\in\dC$.
\gh{(We don't use {\it precedes} anywhere?)}
%We call $D$ {\it initial} if $D$ precedes $v_{t+1}$; i.e. every $v_j\in\dD$ has $j\le t$.
%\gh{(I think we no longer use initial $D$ in our proof.)}
%(and thus is adjacent to $v_1$ \la{to be adjacent to $v_1$, you need $j\leq k+1$}).
Suppose that $\dD=\{v_{i_1}, \ldots, v_{i_t}\}$ with $i_1\le\cdots\le i_t$ and that that $\dD'=\{v_{j_1}, \ldots, v_{j_t}\}$ with $j_1\le\cdots\le j_t$.
We write $D'\le D$ if $j_h\le i_h$ for all $1\le h\le t$, and $D'<D$ if $D'\le D$ and $j_h<i_h$ for some $1\le h\le t$.

\begin{lem}
\label{l:shifted}
If $C$ is $D$-unsolvable, $D$ precedes $C$, and $D'<D$, then $C$ is $D'$-unsolvable.
\gh{(We don't use this?)}
\end{lem}

\begin{proof}
Let $C$ be $D$-unsolvable, $D$ precede $C$, and $D'<D$.
Write $\dD=\{v_{i_1}, \ldots, v_{i_t}\}$ with $i_1\le\cdots\le i_t$, and $D'=\{v_{j_1}, \ldots, v_{j_t}\}$ with $j_1\le\cdots\le j_t$.

Suppose that $C$ is $D'$-solvable, and let $\s$ be a $(C,D')$-solution.
For each $1\le h\le t$, denote by $\s_h$ the portion of $\s$ that solves $v_{j_h}$.
Then, since $D$ precedes $C$ and $D'\leq D$, each $\s_h$ contains some pebbling step $v_\b\peb v_\a$ such that $\a\le i_h\le \b$. 
Replacing that step by the step $v_\b\peb v_{i_h}$ solves $v_{i_h}$.
Making such replacements for each $h$ solves $D$, which contradicts the hypothesis.
Hence $C$ is $D'$-unsolvable.
%\la{where do we use $<$ instead of $\leq$?}
%\gh{We don't.  It's just that the statement is trivial and unnecessary to include in the case that $D'=D$.}
\end{proof}
\end{comment}

%%%%%%%%%%%%%%%%%%%%%%%%%
%       TREES
%%%%%%%%%%%%%%%%%%%%%%%%%
\section{Proof of Theorem \ref{t:tree}}
\label{s:trees}

In this section we verify the Strong $t$-Target Conjecture \ref{j:tTargetStrong} for trees.
For a $(C,D)$-solution $\s$, define $C[\s]$ to be the subconfiguration of $C$ consisting of only those pebbles used by $\s$, and, for $v\in\dD$, define $\s[v]$ to be the pebbling moves of $\s$ consisting of only those pebbles used to solve $v$.

\begin{proof}
The result is trivially true when $s(D)=1$, which means it is true also for $t=1$ and for $|V(T)|=1$, so we assume that $s(D)\ge 2$.
We now proceed by induction on any of these parameters.
Throughout, we will use the fact that every tree $T$ satisfies $\p_t(T)\ge\p_{t-1}(T)+2$; this is evident from Theorem \ref{t:TreeFormula} because $2^{a_1}\ge 2$.

Let $|C|=\p_t(T)-s(D)+1$.
Then 
$|C|= \p(T)+(t-1)2^d-s(D)+1\ge \p(T)+(t-1)(2^d-1)\ge \p(T)$.
If some $r\in\dC\cap\dD$, then $s(D-r)\ge s(D)-1$ and so
\begin{align*}
    |C-r|
    &=|C|-1\\
    &= (\p_t(T)-s(D)+1)-1\\
     &\ge \p_t(T)-s(D-r)-1\\
    &\ge \p_{t-1}(T)-s(D-r)+1.
\end{align*}
If some $r\in\dD$ has $\ecc(r)<d$, then induction on $t$ implies that $C$ has an $r$-solution $\s$ with cost at most $2^{\ecc(r)}\le 2^d-1$.
Then $s(D-r)\ge s(D)-1$ and so
\begin{align*}
    |C-C[\s]|
    &=|C|-\cost(\s)\\
    &\ge (\p_t(T)-s(D)+1)-(2^d-1)\\
    &\ge \p_{t-1}(T)-s(D-r)+1.
\end{align*}
Similarly, if some $r^2\in\dD$ (i.e. $D(r)\ge 2$) has $\ecc(r)=d$, then induction implies that $C$ has an $r$-solution $\s$ with cost at most $2^d$.
Then $s(D-r)=s(D)$ and so
\begin{align*}
    |C-C[\s]|
    &=|C|-\cost(\s)\\
    &\ge (\p_t(T)-s(D)+1)-2^d\\
    &= \p_{t-1}(T)-s(D-r)+1.
\end{align*}
In all three cases, induction on $t$ implies that $C-C[\s]$ is $(D-r)$-solvable, making $C$ $r$-solvable, and any target can be solved with cost at most  $2^{\ecc_T(v)}$.

Therefore we may assume that $\dC\cap\dD=\mt$ and that if $r\in\dD$ then $r$ is a singleton leaf with $\ecc(r)=d$.
Note that $\p_t(T)\ge\p_t(T-v)+1$ for any leaf $v$, because each $a_i\ge 1$.
Since $t\ge 2$ we may choose distinct targets $r$ and $v$ in $\dD$.
Then
\begin{align*}
    |C|
    &= \p_t(T)-s(D)+1\\
    &\ge (\p_{t-1}(T)-s(D)+1)+1\\
    &= \p_{t-1}(T)-s(D-v)+1,
\end{align*}
and so, by induction on $t$, there is an $r$-solution $\s$ of cost at most $2^d$.
Therefore
\begin{align*}
    |C-C[\s]|
    &\ge (\p_t(T)-s(D)+1)-2^d\\
    &= \p_{t-1}(T)-s(D)+1\\
    &= \p_{t-1}(T)-s(D-r)\\
    &\ge \p_{t-1}(T-r)-s(D-r)+1,
\end{align*}
which implies, by induction on $t$ and $|V(T)|$, that $C-C[\s]$ is $(D-r)$-solvable and that any $u\in\dD-r$ has a solution of cost at most $2^\ecc(u)$.
Combined with the prior $r$-solution, this proves the theorem.
\end{proof}

%%%%%%%%%%%%%%%%%%%%%%%%%%%%%%%%%%%%%%%%%%%%
%         PROOF OF UPPER BOUND
%%%%%%%%%%%%%%%%%%%%%%%%%%%%%%%%%%%%%%%%%%%%
\section{Proof of Theorem \ref{t:Dsolve}}
\label{s:proof_D_solve}

In this section we verify the Strong $t$-Target Conjecture \ref{j:tTargetStrong} for powers of paths.
We begin by introducing some important notation.

For pebbling functions $F_1$ and $F_2$, define the function $F_1\pm F_2$ by $(F_1\pm F_2)(v)=F_1(v)\pm F_2(v)$, as well as the functions $F_1\wedge F_2$ and $F_1\vee F_2$ by $(F_1\wedge F_2)(v)=\min\{F_1(v),F_2(v)\}$ and $(F_1\vee F_2)(v)=\max\{F_1(v),F_2(v)\}$.
(Thus, if $F=F_1\wedge F_2$ we have $\dF=\dF_1\cap\dF_2$, while if $F=F_1\vee F_2$ we have $\dF=\dF_1\cup\dF_2$.)

%\la{it would be nice to have here some examples of the following definitions}
Let $F$ be a pebbling function on $G=\Pnk$.
Denote by $\Gij$ the subgraph of $G$ induced by the vertices $\{v_i,v_{i+1},...,v_j\}$, and define $\Fij$ to be the restriction of $F$ to $\Gij$.
Define the {\it pebbling arrangement} of $F$, $\cA(F)=\langle a_1,\ldots,a_m\rangle $, where $m=n+|F|$, to be the sequence 
\[\langle v_1, v_{1,1}, \ldots, v_{1,{F(v_1)}}, v_{2}, v_{2,1}, \ldots, v_{2,{F(v_2)}}, \ldots, v_{n}, v_{n,1}, \ldots, v_{n,{F(v_n)}}\rangle.\]
Also, define the {\it labeling} of $\dF$ to be the sequence \[\cL(F)=(v_{1,1}, \ldots, v_{1,{F(v_1)}}, v_{2,1}, \ldots, v_{2,{F(v_2)}}, \ldots, v_{n,1}, \ldots, v_{n,{F(v_n)}}).\]
For $1\le i\le n$, we write $V_i=\{v_i, v_{i,1}, \ldots, v_{i,F(v_i)}\}$.
Given a pebbling arrangement $\cA=\langle a_1, \ldots. a_m\rangle$, define $\Aaij$ to be $\langle a_i, \ldots. a_j\rangle$ if $a_i=v_h$, for some $h$, and $\langle v_h, a_i, \ldots, a_j\rangle$ if $a_i\in V_h-\{v_h\}$, for some $h$.
Define $\Faij$ to be the pebbling function whose pebbling arrangement $\cA(\Faij)$ equals $\Aaij$.

Given a size $t$ distribution $D$ with $s(D)<n$, let $j$ be the least index with $D(v_j)=0$ and define the configuration $W_D$ by $W_D(v)=0$ for all $v\in\dD$, $W_D(v_j)=2t-1$, and $W_D(v)=1$ otherwise.
Because $\pot(W_D)=t-1$, $W_D$ is $D$-unsolvable.
Notice also that $|W_D|=n-s(D)+2t-2=(\ptnk-s(D)+1)-1$.

In general, call a configuration $C$ $D$-{\it small} if $\pot(C)=|D|-1$, and a graph $G$ $D$-{\it small} if some $D$-unsolvable configuration of maximum size is $D$-small.
We say that $G$ is $t$-{\it small} if it is $D$-small for all $|D|=t$.
In addition, define a maximum-size $D$-small configuration $C$ to be {\it canonical} whenever the following holds:
if $s(D)<n$ then $C(v)=0$ for all $v\in\dD$ and $C(v)$ is odd for all $v\not\in\dD$;
if $s(D)=n$ then, for some $v$ with $D(v)=a=\min_u D(u)$, $C(v)=2t-a-1$ and $C(u)=0$ for all $u\not= v$.
A configuration $C$ is called {\it stacked} if there is some vertex $v$ such that $C(v)>0$ and $C(u)=0$ for all $u\not= v$. 
Observe that the canonical configuration in the $s(D)=n$ case above is stacked.
In \cite{CCFHPST}, the following theorem was proven by using the appropriate stacked configuration.

\begin{thm}
\label{t:piKn}
If $s(D)=n$ then $\pi(K_n,D)=2|D|-\min_u D(u)$.
\end{thm}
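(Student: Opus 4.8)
The plan is to prove the two inequalities $\p(K_n,D)\ge 2|D|-\min_u D(u)$ and $\p(K_n,D)\le 2|D|-\min_u D(u)$ separately. Write $t=|D|$ and $a=\min_u D(u)$, and fix a vertex $v$ with $D(v)=a$; note that $a\ge 1$ since $s(D)=n$. The quantity $2t-a$ is exactly the number of pebbles a stack on $v$ needs in order to solve $D$: keep $a$ pebbles on $v$ and pay $2$ pebbles for each of the $t-a$ pebbles delivered (at distance $1$) to the remaining targets, for a total of $a+2(t-a)=2t-a$. So the content of the theorem is the cover-pebbling (``stacking'') principle that (i) the stack on the cheapest vertex $v$ is a worst-case configuration, and (ii) its solving cost is $2t-a$.

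For the lower bound I would use the canonical stacked configuration $C$ with $C(v)=2t-a-1$ and $C(u)=0$ for $u\ne v$, certifying its $D$-unsolvability with a weight function rather than a case analysis on pebbling sequences. Assign $\mu(v)=1$ and $\mu(u)=2$ for every $u\ne v$. Each pebbling step removes two pebbles from some $x$ and adds one to an adjacent $y$, changing the weight $\sum_w C(w)\mu(w)$ by $\mu(y)-2\mu(x)$, and one checks $\mu(y)\le 2\mu(x)$ for every ordered pair in $K_n$ (the tight case being a move out of $v$). Hence no step increases the weight. The starting weight is $(2t-a-1)\mu(v)=2t-a-1$, while any configuration meeting $D$ has weight at least $\sum_u D(u)\mu(u)=a+2(t-a)=2t-a$; since weight is non-increasing, $D$ can never be met, so $\p(K_n,D)\ge 2t-a$.

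For the upper bound I would first record the constructive sufficiency criterion for solvability in $K_n$: if
\[\sum_{w}\big\lfloor (C(w)-D(w))^+/2\big\rfloor\ \ge\ \sum_{u}(D(u)-C(u))^+,\]
then $C$ is $D$-solvable. This is immediate---leave $\min\{C(u),D(u)\}$ pebbles on each vertex, and from each surplus vertex $w$ push $\lfloor (C(w)-D(w))/2\rfloor$ pebbles, freely routed (all targets are neighbors) to cover the total deficit $R=\sum_u(D(u)-C(u))^+$. Its contrapositive says that any $D$-unsolvable $C$ has $P:=\sum_w\lfloor(C(w)-D(w))^+/2\rfloor\le R-1$. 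Writing $e_w=(C(w)-D(w))^+$ and letting $Z$ be the number of non-deficient vertices with $e_w$ odd, I then bound $|C|=t-R+\sum_w e_w\le t-R+2P+Z\le t+R-2+Z$.

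The main obstacle is to finish with the inequality $R+Z\le t-a+1$, which gives $|C|\le 2t-a-1$ and hence $\p(K_n,D)\le 2t-a$. The idea: if $q$ denotes the number of deficient vertices, then each of the $n-q$ non-deficient vertices has $D$-value at least $a$, so $R\le\sum_{\text{def}}D(u)\le t-(n-q)a$, while $Z\le n-q$; combining gives $R+Z\le t+(n-q)(1-a)\le t+(1-a)$ whenever $q\le n-1$ (using $1-a\le 0$). The degenerate case $q=n$ (every vertex deficient, so no surplus and $|C|\le t-n$) is checked directly against $2t-a-1$. This counting step, and in particular the correct bookkeeping of the odd-surplus term $Z$---which is exactly what keeps the naive pairing estimate from being too lossy---is the delicate part; everything else is routine.
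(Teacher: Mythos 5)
Your proof is correct, and it takes a genuinely different route from the paper, which in fact offers no proof of Theorem \ref{t:piKn} at all: it quotes the result from \cite{CCFHPST}, remarking that it "was proven by using the appropriate stacked configuration," and immediately afterwards records the general Stacking Theorem (Theorem \ref{t:Stack}) of \cite{Sjost,VuoWyc}, which reduces the computation of $\pi(K_n,D)$ to evaluating the stack of $2t-a-1$ pebbles on a cheapest vertex. Your argument replaces that machinery with two self-contained pieces. For the lower bound, the weight function $\mu(v)=1$, $\mu(u)=2$ for $u\ne v$ is non-increasing under pebbling moves (the check $\mu(y)\le 2\mu(x)$ holds for every ordered pair in $K_n$, with equality only for moves out of $v$), and it cleanly separates the initial weight $2t-a-1$ from the weight $2t-a$ of any configuration dominating $D$; this is a tidy certificate that avoids any analysis of pebbling sequences. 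For the upper bound, your surplus/deficit count is sound: the sufficiency criterion in $K_n$ is immediate, its contrapositive gives $P\le R-1$ for unsolvable $C$, the identity $|C|=t-R+2P+Z$ is correct (deficient vertices have even, indeed zero, surplus, so $Z$ counts all odd surpluses), and the inequality $R+Z\le t+(n-q)(1-a)\le t-a+1$ for $q\le n-1$, together with the trivial case $q=n$, closes the argument. What your route buys is independence from the stacking literature --- indeed your upper bound \emph{is} a proof that stacks are worst-case on $K_n$, the very fact the paper imports; what the paper's route buys is brevity and a principle that applies to every graph with $s(D)=n(G)$, not only to complete graphs.
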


This yields the following corollary.

\begin{cor}
\label{c:canonKn}
If $s(D)=n$ then $K_n$ has a canonical $D$-small configuration.
\end{cor}

Theorem \ref{t:piKn} is part of the following, more general ``Stacking Theorem'' proved in \cite{Sjost} and \cite{VuoWyc}.

\begin{thm}
\label{t:Stack}
For every graph $G$, if $s(D)=n(G)$ then there exists a $D$-unsolvable configuration of maximum size that is stacked.
\end{thm}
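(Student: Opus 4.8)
The plan is to prove more than existence: I will compute the maximum size of a $D$-unsolvable configuration and show that this maximum is attained by a single stack. For a vertex $v$ write $c_v=\sum_{u\in V(G)}D(u)\,2^{\dist(u,v)}$, the number of pebbles that, stacked on $v$, are needed to cover $D$. The target is the identity $\pi(G,D)=\max_v c_v$, together with the observation that the stacked configuration placing $c_{v^\ast}-1$ pebbles on a maximizer $v^\ast$ is $D$-unsolvable of maximum size; being stacked, it is the desired witness. This is consistent with the complete-graph case of Theorem \ref{t:piKn}, where every $\dist(u,v)=1$ gives $c_v=2|D|-D(v)$ and hence $\max_v c_v=2|D|-\min_u D(u)$, matching both $\pi(K_n,D)$ and the canonical stacked configuration of Corollary \ref{c:canonKn}. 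Throughout I would use the No-Cycle Lemma \ref{l:NoCycle} and the Tidy Lemma \ref{l:Tidy} to assume that any purported solution routes each delivered pebble along an induced slide with acyclic support.

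For the lower bound I would first record the single-source cost fact: from a stack on one vertex $v$, covering $D$ consumes at least $c_v$ pebbles, with equality achievable. The No-Cycle Lemma forces an acyclic solution, so its support is an out-branching rooted at $v$; along a shortest-path branching each unit delivered to a vertex at distance $d$ is paid for by $2^{d}$ pebbles at the root, and because the support is acyclic no surplus can be recycled to a sibling subtree, so the costs add to exactly $\sum_u D(u)2^{\dist(u,v)}=c_v$. Consequently placing $c_{v^\ast}-1$ pebbles on $v^\ast$ cannot cover $D$, giving $\pi(G,D)\ge\max_v c_v$ with a stacked extremal example. The essential use of the hypothesis $s(D)=n(G)$ enters here and below: with a zero-demand vertex the adversary would prefer to spread pebbles, and stacking would fail.

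The hard direction is the matching upper bound, that every $D$-unsolvable configuration $C$ has $|C|<\max_v c_v$; equivalently, any $C$ with $|C|\ge\max_v c_v$ is $D$-solvable. The route I would take is an exchange argument. Among all $D$-unsolvable configurations of maximum size choose one, $C$, occupying the fewest vertices, and suppose for contradiction that $C$ occupies two vertices $a\ne b$. Form the two relocations $C_a$, obtained by moving every pebble of $b$ onto $a$, and $C_b$, obtained by moving every pebble of $a$ onto $b$; each has size $|C|$ and strictly fewer occupied vertices, so by the choice of $C$ both $C_a$ and $C_b$ must be $D$-solvable. The crux — and what I expect to be the main obstacle — is to derive a contradiction by manufacturing a $(C,D)$-solution out of the two solutions of $C_a$ and $C_b$, thereby forcing $C$ to be stacked. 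A direct merge fails because relocating pebbles changes which targets are cheaply reachable, and a careless relocation can turn an unsolvable configuration solvable; this is exactly the subtlety that the references handle. The tool I would bring to bear is a weight-function certificate of unsolvability in the spirit of Sjöstrand: attach to the unsolvable $C$ a system of vertex weights whose total against $C$ falls strictly short of its total against $D$, and show that at least one of the two relocations preserves this shortfall, contradicting the solvability of that relocation. Controlling such a certificate across the exchange step is the heart of the proof; once it is in place, minimality of the occupied-vertex count forces a stacked extremizer and the theorem follows.
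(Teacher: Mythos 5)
The paper does not actually prove Theorem \ref{t:Stack}: it quotes the result from Sj\"ostrand \cite{Sjost} and Vuong--Wyckoff \cite{VuoWyc}, so your proposal amounts to an attempt to reprove a known, nontrivial theorem --- and it does not succeed. What you genuinely establish is only the easy half: the formula $\pi(G,D)=\max_v c_v$ is indeed the correct (known) statement, consistent with Theorem \ref{t:piKn}, and your lower-bound argument (a stack of $c_{v^\ast}-1$ pebbles on a maximizer is $D$-unsolvable, by the disjoint-ancestry counting behind the No-Cycle Lemma) is sound. But the entire content of the theorem is the other direction: that every configuration of size $\max_v c_v$ is $D$-solvable, or in your framing, that the exchange step ends in a contradiction. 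This is exactly the step you defer --- you yourself call it ``the crux,'' ``the main obstacle,'' and ``the heart of the proof'' --- and the gesture toward a weight-function certificate such that ``at least one of the two relocations preserves this shortfall'' is a restatement of the difficulty, not an argument.

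Worse, the preservation claim your exchange step needs cannot be a local fact about unsolvable configurations; it would have to use maximality of $|C|$ in an essential way, and your sketch gives no indication how. Concretely, let $G$ be the path $a\sim b\sim x$, let $D\equiv 1$ (so $s(D)=n(G)=3$), and let $C(a)=4$, $C(b)=1$, $C(x)=0$. Then $C$ is $D$-unsolvable: with respect to the weight function $w(y)=2^{-\dist(y,x)}$, which no pebbling move can increase, $C$ has weight $\frac32$, while any configuration covering $D$ has weight at least $1+\frac12+\frac14=\frac74$. Yet relocating the four pebbles at $a$ onto $b$ produces a stack of five on $b$, which solves $D$ (keep one pebble on $b$, and send one pebble each to $a$ and to $x$ at cost $2+2$). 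So a single relocation can turn an unsolvable configuration into a solvable one, destroying any unsolvability certificate attached to it; whatever certificate you attach to your maximum-size, minimally spread $C$, proving that one of the two relocations preserves it is precisely where the whole difficulty of the theorem lives, and it remains open in your proposal. The cited sources close this gap with arguments that are global in nature and exploit the positivity of $D$ at every vertex; nothing of that sort is present in your sketch.
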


\begin{lem}
\label{l:canonical}
If $\p(\Pnk,D)\le n+2|D|-2-s(D)+1$ and if $s(D)<n$ then $\Pnk$ has a canonical $D$-small configuration.
\end{lem}

\begin{proof}
Suppose that $s(D)<n$ and let $C$ be a configuration of size $n+2|D|-2-s(D)$ such that $C(v)=0$ for all $v\in\dD$ and $C(v)$ odd for all $v\not\in\dD$.
Such a configuration exists because, for $t=|D|$, we can place $(t-1)$ pairs of pebbles the vertices not in $\dD$, and then one additional pebble on each such vertex, which amounts to $2(t-1)+(n-s(D))=|C|$ pebbles in total.
%if we remove one pebble of $C$ from each $v\not\in\dD$, then we have removed exactly $n-s(D)$ pebbles, leaving a configuration $C'$ of size $2|D|-2$ with $C'(v)$ even for every $v\not\in\dD$.
%This yields exactly $t-1$ pairs of pebbles in $C'$, and so 
Now, such a $C$ then clearly has $\pot(C)=\pot(C')=t-1$; that is, $C$ is $D$-small.
Moreover, this implies that $\p(\Pnk,D)\ge |C|+1 = n+2|D|-2-s(D)+1$, and so $|C|=\pi(\Pnk,D)-1$; that is, $C$ is maximal $D$-unsolvable, and hence canonical.
\end{proof}

\begin{fct}
\label{f:WideSupport}
Suppose that $\Pnk$ is $t$-wide, and let $d=\diam(\Pnk)$ and $D$ be a distribution of size $t$.
If $d\ge 2$ then $s(D)<n$.
Contrapositively, if $s(D)=n$ then $d=1$; i.e. $\Pnk=K_n$.
\end{fct}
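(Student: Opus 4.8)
The plan is to reduce the statement to a single numerical inequality, namely $t<n$. The crucial preliminary observation is that for any distribution $D$ we always have $s(D)\le|D|=t$, because every vertex of $\supp(D)$ contributes at least one unit to the size of $D$. Consequently, if I can show $t<n$, then $s(D)\le t<n$, which is exactly the first assertion; the second assertion is its contrapositive (under the standing $t$-wide hypothesis) and follows formally.

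To prove $t<n$ I would combine the $t$-wide hypothesis with the diameter formula. First, since $\Pnk$ is $t$-wide and $d\ge2$, the definition gives $t\le t_0=k(d-1)/(2^d-2)$; and because $d\ge2$ forces $2^d-2\ge2$, this yields $t\le k(d-1)/2$. Second, the diameter identity $d=\lfloor(n-2)/k\rfloor+1$ --- equivalently the relation $b=(n-2)-k(d-1)\ge0$ recorded in Section~\ref{s:def} --- gives $n-2\ge k(d-1)$, that is $n\ge k(d-1)+2$. Chaining these estimates produces $t\le k(d-1)/2<k(d-1)+2\le n$, where the strict middle inequality holds because its defect $k(d-1)/2+2$ is positive. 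This gives $t<n$ and completes the argument.

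I do not expect a genuine obstacle here; the proof is a short chain of inequalities. The only points that require care are selecting the correct branch of the definition of $t_0$ (the $d\ge2$ value $k(d-1)/(2^d-2)$, rather than the degenerate $t_0=1$ used when $d=1$) and correctly translating the floor in the diameter formula into the clean bound $n\ge k(d-1)+2$. The estimate $2^d-2\ge2$ is tight only at $d=2$, so this crude step cannot be sharpened on its own, but the resulting bound $t\le k(d-1)/2$ already sits comfortably below $n$, leaving ample slack.
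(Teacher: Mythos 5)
Your proof is correct and takes essentially the same approach as the paper, which disposes of the fact in one line via the chain $s(D)\le t\le k(d-1)/(2^d-2)\le k/2<n$. The only cosmetic difference is how $t_0$ is bounded above: you use $2^d-2\ge 2$ together with $n\ge k(d-1)+2$, whereas the paper uses $(d-1)/(2^d-2)\le 1/2$ together with $k<n$; both are immediate.
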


\begin{proof}
This follows immediately from the inequality $s(D)\le t\le k(d-1)/(2^d-2)\le k/2<n$.
\end{proof}

We prove Theorem \ref{t:Dsolve} by proving the following stronger result.

\begin{thm}
\label{t:DsolveStrong} 
Let $k$, $n$ and $t$ be positive integers. 
Assume that $D$ is a target distribution on $G=\Pnk$ of size $t$.
Then $\p(G,D)\le\ptnk-s(D)+1$.
In addition, if $G$ is $t$-wide then it is $D$-small, while if $G$ is strictly $t$-long then any configuration $C$ on $G$ of size $|C|\ge p_t(n,k)-s(D)+1$ solves any target $v\in\dD$ with cost at most $2^{\ecc_G(v)}$.
\end{thm}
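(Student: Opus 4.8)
The plan is to establish Theorem~\ref{t:DsolveStrong} by a single coupled induction, carrying the size bound $\p(G,D)\le\ptnk-s(D)+1$ together with its two regime-specific strengthenings (the $D$-smallness when $G$ is $t$-wide and the per-target cost bound $2^{\ecc_G(v)}$ when $G$ is strictly $t$-long). The reason all three must be carried simultaneously is that the two regimes are driven by incompatible engines. In the $t$-long regime the consecutive gap $\ptnk-p_{t-1}(n,k)=2^d$ is large enough to ``pay'' for solving a single target at the greedy cost $2^{\ecc_G(v)}\le 2^d$, so there the argument peels one target at a time exactly as in the tree proof of Theorem~\ref{t:tree}. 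In the $t$-wide regime the gap is only $w_t(n)-w_{t-1}(n)=2$, far too small to peel a target, so there solvability is controlled instead by the potential (Lemma~\ref{l:PotLem}) and by routing through the $k$-connectivity of $G$.

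First I would handle the single-target backbone $s(D)=1$, which is exactly the upper bound $\p_t(\Pnk,r)\le\ptnk$ and serves as the inner base case for an induction on $n$. When $r$ is interior, Corollary~\ref{c:internal} cuts $G$ at $r$ into two strictly smaller path powers $P_{n_1}^{(k)}$ and $P_{n_2}^{(k)}$, in each of which $r$ is simplicial; applying the $s=1$ induction hypothesis to bound $\p_{t_i}(P_{n_i}^{(k)},r)\le p_{t_i}(n_i,k)$ and then invoking the Path Power Lemma~\ref{l:interior} reassembles these into $\ptnk$. When $r$ is simplicial (an end of the path) I would use the Cheap Lemma~\ref{l:CheapLemma}, bounding $q(n,k)$ by the pebbling number of a breadth-first caterpillar rooted at $r$; this simultaneously seeds the cheap-cost conclusion needed in the long case.

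For $s(D)\ge2$ in the long regime, I would select a target $v\in\dD$, produce a cheap (and, by Lemmas~\ref{l:Greedy} and~\ref{l:Tidy}, semi-greedy and tidy) $v$-solution $\s$ of cost at most $2^{\ecc_G(v)}$, delete the pebbles $C[\s]$ it uses, and recurse on $D-v$. The accounting mirrors Theorem~\ref{t:tree} and splits into the easy subcases $\ecc_G(v)<d$ and $D(v)\ge2$ with $\ecc_G(v)=d$, and the delicate subcase where $v$ is a singleton target of eccentricity $d$: there I would first use a \emph{second} target to certify via the induction hypothesis that a cost-$2^d$ $v$-solution exists, and then charge the remainder against $\ptnk-p_{t-1}(n,k)=2^d$ while reducing $n$ by deleting the end vertex $v$. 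This is precisely why the cost conclusion is asserted only for \emph{strictly} $t$-long $G$: there $t-1\ge\lceil t_0\rceil\ge t_0$, so the reduced instance is still long and the induction hypothesis supplies the same $2^d$ budget. In the wide regime I would instead bound a maximal $D$-unsolvable configuration $C$ directly: after reducing away any pebble sitting on a target (so that $C$ vanishes on $\dD$ and has at least $s(D)$ zeros), I would show that $\pot(C)\le t-1$ --- the sufficiency of potential for solvability in this dense regime being certified by routing the potential moves to the $t\le k/2<k$ targets along internally disjoint paths (Corollary~\ref{c:menger}) and by pushing potential vertices toward the targets with Lemma~\ref{l:C'} --- whereupon the Potential Lemma~\ref{l:PotLem} forces $|C|\le 2t+n-2-s(D)$. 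Fact~\ref{f:WideSupport} guarantees $s(D)<n$ (or else $G=K_n$, handled by Theorem~\ref{t:piKn}), so Lemma~\ref{l:canonical} upgrades the bound to the canonical $D$-smallness conclusion.

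The hard part, and the reason the wide and long arguments cannot simply be run in parallel, is the \emph{barely} $t$-long boundary $t_0<t=\lceil t_0\rceil$, where $t-1<t_0$ is wide. Here peeling a single target drops the instance into the wide regime, and because $l_t\approx w_t$ near $t_0$ the affordable budget collapses from $2^d$ to roughly $2$, so the greedy peel can no longer be paid for; on the other hand the pure wide potential count does not apply because $t>t_0$. I expect to resolve this by importing the $D$-smallness of the $(t-1)$-wide remainder supplied from below by the induction --- using its forced low potential to certify solvability of the lone ``expensive'' target without the full $2^d$ charge --- and by verifying that the $\max$ defining $\ptnk$ switches from $w_t$ to $l_t$ exactly where these two inductive budgets meet. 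Keeping the long-style cost accounting and the wide-style potential accounting consistent across the $t_0$ threshold is the crux on which the whole induction turns.
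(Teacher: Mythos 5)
Your high-level reading of the two regimes (peel one target against a $2^d$ budget when long; potential counting when wide) matches the paper, and your interior-target reduction via Corollary \ref{c:internal} plus Lemma \ref{l:interior} is exactly the paper's. But the backbone of your induction fails at precisely the point where everything bottoms out: the simplicial target. Corollary \ref{c:internal} splits $G$ into two path powers in each of which the target is simplicial, so the simplicial case carries the whole induction, and you propose to settle it with the Cheap Lemma \ref{l:CheapLemma} applied to a breadth-first caterpillar. By Theorem \ref{t:TreeFormula} that caterpillar has $\p_t(T,r)=t2^d+n-d-1$, so this route can only give $\p_t(\Pnk,r)\le t2^d+n-d-1$. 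That exceeds $\ltnk=t2^d+n-2-k(d-1)$ by $(k-1)(d-1)>0$ whenever $k,d\ge 2$, and exceeds $\wtn=2t+n-2$ by $t(2^d-2)-(d-1)>0$ whenever $d\ge 2$; so in both regimes, for every $k\ge 2$ and $d\ge 2$, the bound it yields is strictly weaker than $\ptnk$. The paper invokes the caterpillar only in the strictly $t$-long case, where inequality (\ref{e:cheapcompare}) shows $\ltnk$ dominates $2^d+n-d-1$; that inequality is exactly what fails for small $t$. The paper's actual base case ($t=1$, simplicial $r$) is a different argument: a configuration of size $p_1(n,k)$ avoiding $v_1$ must have a big vertex, hence $k$ consecutive empty vertices forming a separator, and the Near/Far case analysis pushes pebbles across this empty cutset by induction on $n$.

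Second, your wide-regime argument is not enough when $d\ge 3$. Having $\pot(C)\ge t$ together with internally disjoint $(D,B)$-paths from Corollary \ref{c:menger} certifies solvability only when $d=2$: there a pair of potential moves reaches any target through a dominating vertex, and a single potential suffices along a slide. When $d\ge 3$, a pair of potentials advances a pebble only one step, so disjoint paths containing zeros cannot be traversed; and Lemma \ref{l:C'} works in the opposite direction from the one you want — it moves potentials \emph{away} from the target while preserving \emph{unsolvability}, so it can normalize counterexamples but cannot pull potentials toward targets to certify that a given configuration solves $D$. This is why the paper confines the Menger argument to its $d=2$ subcases, and carries the wide $d\ge 3$ case by an entirely different mechanism — splitting the pebbling arrangement at the targets, canonical $D$-small configurations (Corollary \ref{c:canonKn}, Lemma \ref{l:canonical}), and a minimality argument on $|C'_{[1,k]}|$ — while the barely-long $d\ge 3$ case is handled by scanning the indicator function $\D(j)$ across the arrangement to locate a split point with $\D(j)\in\{0,1\}$ where both sides are wide. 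Your sketch for the boundary case (``import $D$-smallness from below'') points in the right direction, but these constructions are the technical heart of the proof and your proposal does not supply them.
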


\begin{proof} For given $k$, $n$ and  $t$, let  $D$ be a target distribution on $P_n^k$ with $t=|D|$ and $s=s(D)$.
Observe that %$\Pnk$ is $t$-wide, 
the $D$-unsolvable configuration $W_D$ witnesses that $\p(\Pnk,D)\ge \ptnk-s(D)+1$.
Hence, when $\Pnk$ is $t$-wide and $s(D)<n$,  showing that $\p(\Pnk,D)\le \ptnk-s(D)+1$ will also imply that $\p(\Pnk,D)= \ptnk-s(D)+1=|W_D|+1$, and so the second statement of the theorem follows immediately. Anyway, in the arguments that follow,
sometime we  first prove the second statement and then we use it to prove the upper bound.  
% sometimes we use the upper bound to prove that it is $D$-small, and sometimes we use that it is $D$-small to prove the upper bound.

We start using induction on  $k$.
The case $k=1$ is a path, which has been proven in Theorem \ref{t:tree} (the only $t$-wide path is the 1-wide $P_2$), so we may assume that $k>1$. 

Now we proceed by induction on $n$.
%Let $n$, $k$, and $D$ be given, with $t=|D|$ and $s=s(D)$, and suppose that $|C|=\ptnk-s+1$.

\subsection{Base Step: $n\le k+1$}
\label{ss:Base}

Here $\Pnk=K_n$ and $d=1$, so $\ptnk=2t+n-2$.
Let $C$ be a pebbling configuration with $|C|\ge 2t+n-2-s(D)+1$ and let $H=C\wedge D$, with $|H|=h$.
Define $C'=C-H$ and $D'=D-H$, so that $|C'|=|C|-h$ and $|D'|=|D|-h$.
Observe that $C'$ has at least $s(D')$ zeros, so that, by the Potential Lemma \ref{l:PotLem},
\begin{align*}
    \pot(C')
    &\ge \bigg\lceil\frac{(2t-s(D)-1-h)+s(D')}{2}\bigg\rceil\\
    &= (t-h)+\bigg\lceil\frac{h-1+s(D')-s(D)}{2}\bigg\rceil\\
    &\ge (t-h),\\
\end{align*}
since $s(D')\ge s(D)-h$.
Hence we can solve the $h$ targets of $H$ identically and then solve the remaining $t-h$ targets of $D-H$ with the $t-h$ potential of $C-H$.
This completes the upper bound for the case $n\le k+1$.
As a consequence, $\Pnk$ is $D$-small in this case.
In addition, every minimal solution has cost at most $2=2^d$.

\subsection{Induction Step: $n\ge k+2$}
Here we have $d\ge 2$. We use induction on $t$.

\subsubsection{Base Case: $t=1$}
\label{sss:tbase}

Here $G$ can be $t$-long, but never strictly so; thus we will not need to calculate the cost of a solution.

In this case we have $s=1$.
Thus, if $d=2$  then the result holds by Theorem 3 of $\cite{AGHkConnU}$ because $G$ has a universal vertex.
So we will assume that $d\ge 3$.
In addition, if $k= 2$, then the result holds by Theorem 3.3 of \cite{AGH2P}, so we will assume that $k\ge 3$.

Let $r$ be the target vertex and suppose that $r$ is internal.
Then $\Pnk-r$ consists of two components $G_1$ and $G_2$ such that, for each $i$, the subgraph of $\Pnk$ induced by $V(G_i)\cup\{r\}$ is isomorphic to $P_{n_i}^{(k)}$, with $n_1+n_2=n+1$.
Now Corollary \ref{c:internal} states that (since the only solution to $t_1+t_2=2$ is $t_1=t_2=1$)
\[\p(\Pnk,D)=\p_1(\Pnk,r)= \p_1(P_{n_1}^{(k)},r)+\p_1(P_{n_2}^{(k)},r)-1,\]
which we can write as
\[\p(\Pnk,D)\le p_1(n_1,k)+p_1(n_2,k)-1,\]
by using induction on $n$.
Then Lemma \ref{l:interior} reveals that this is at most $p_1(n,k)$ , as required.

Thus we may assume that $r$ is simplicial; i.e. that $D(v_1)=1$. 
Let $C$ be of size $p_1(n,k)-1+1$; clearly we can assume 
%$\dC\cap\dD=\mt$, which implies that 
$C(v_1)=0$.
Hence there is a big vertex $u$, which means that there is an empty $(v_1,u)$-separator; i.e, 
there are $k$ consecutive empty vertices $v_{i+1}$, $\cdots $, $v_{i+k}$, for some $i\ge 1$.
Recall that $t_0=t_0(k,d)=k(d-1)/(2^d-2)$.

\begin{enumerate}[listparindent=1.5em]

    \item 
    \underline{\bf Near Case: $i=1$.}
    \bigskip\\
    Here we have $C(v_2)=\cdots=C(v_{k+1})=0$. 
    To put a pebble on $v_1$, it is enough to put two pebbles on $v_{k+1}$. 
    Let $G'=G_{[k+1,n]}$, with configuration $C'=C_{[k+1,n]}$. 
    Notice that $G'=P_{n'}^{(k)}$, where $n'=n-k$ and $d'=d-1$, and that $|C'|=|C|$.

    \begin{enumerate}[listparindent=1.5em]

        \item 
        \underline{\bf Subcase: $1\le t_0(k,d)$.}
        \bigskip\\
        In this case we have $|C| = w_1(n) = n$.
        If $2\le t_0(k,d-1)$ then, by induction on $n$, we have $\pi_2(G') = w_2(n') = n-k+2 < n = |C'|$, and so we can place two pebbles on $v_{k+1}$.

        On the other hand, if $2>t_0(k,d-1)$ then $2(2^{d-1}-2)>k(d-2)$ and so, by induction on $n$, we have $\pi_2(G') =  2(2^{d-1})+(n-k)-k(d-2)-2 = n+[(2^d-2)-k(d-1)]\leq n = |C'|$; thus we can place two pebbles on $v_{k+1}$. 
        
        We can also conclude that in this case $G$ is $D$-small.
        
        \item 
        \underline{\bf Subcase: $1>t_0(k,d)$.}
        \bigskip\\
        In this case we have $|C| = l_1(n,k) = 2^d+n-2-k(d-1)$.
        Because $k\ge 3$, we know that $k\ge 2$.
        Thus $-k(d-2)\ge 2-k(d-1)$, which implies that $2(2^{d-1}-2)-k(d-2)\ge (2^d-2)-k(d-1)>0$.
        Hence $2>t_0(k,d-1)$.

        By induction on $n$ we have $\p_2(G') = l_2(n',k) = 2(2^{d-1})+(n-k)-2-k(d-2) = |C'|$, and so we can place two pebbles on $v_{k+1}$.

    \end{enumerate}

    \item 
    \underline{\bf Far Case: $i\ge 2$.}
    \bigskip\\
        
    Now we have that the first empty cutset is $v_{i+1}$, $\cdots $, $v_{i+k}$, for some $i>1$.
    Hence $C(v_i)>0$ and there is a slide from $v_i$ to $v_1$, so we may assume that $C(v_i)=1$. 
    Therefore, to put a pebble on $v_1$, it is sufficient to put one more pebble on $v_i$ from the configuration $C'=C_{[i,n]}-v_i=C_{[i+1,n]}$ on $G'=G_{[i,n]}$.
    Notice that $G'=P_{n'}^{(k)}$, where $n'=n-i+1$, and that $|C'|\geq |C|-i+1$.

   % This implies that, in the case $d'=d$ (when the induction hypothesis cannot be applied to $G'$), the result holds for $G'$, because of the previous case.
    The diameters of $G$ and $G'$ are related by  \[k(d'-1)+b'=n'-2=n-i+1-2=k(d-1)+b-i+1,\]
    so that $k(d-d')=i+(b'-b)-1$.

\begin{enumerate}[listparindent=1.5em]

        \item 
        \underline{\bf Subcase: $1\le t_0(k,d)$.}
        \bigskip\\
        In this case we have $|C| = w_1(n) = n$.
        Because, for $d\ge 2$ and fixed $k$, $t_0(k,d)$ is a strictly decreasing function of $d$, and $d'\le d$, we have $1\le t_0(k,d')$.
        Then, by induction on $n$, we have $\pi_1(G') = w_1(n') = n-i+1 \le |C'|$, and so we can place a pebble on $v_i$.\\
        Again, we can conclude that in this case $G$ is $D$-small.

        \item 
        \underline{\bf Subcase: $1>t_0(k,d)$.}
        \bigskip\\
        In this case we have $|C| = l_1(n,k) = 2^d+n-2-k(d-1)$ and $(2^d-2)>k(d-1)$.
        If $1\le t_0(k,d')$ then, by induction on $n$, we have $\pi_2(G') = w_2(n') = n-i+1 < [2^d+n-2-k(d-1)]-i+1 \le |C'|$, and so we can place two pebbles on $v_{k+1}$.

        On the other hand, if $1>t_0(k,d')$ then $(2^{d'}-2)>k(d'-1)$.
        Because $t_0(k,d)$ is decreasing in $d$, and $d'\le d$, we have 
        \begin{align*}
            \left(\frac{2^{d'}-2}{d'-1}\right)-k 
            &\le  \left(\frac{2^d-2}{d-1}\right)-k\\
            &\le  \left(\frac{2^d-2}{d'-1}\right)-k\left(\frac{d-1}{d'-1}\right),
        \end{align*}
        which implies that $(2^{d'}-2)-k(d'-1) \le (2^d-2)-k(d-1)$.
        Hence, by induction on $n$, we have $\pi_1(G') = 2^{d'}+(n-i+1)-k(d'-1)-2
        \leq [2^d+n-2-k(d-1)]-i+1 \le |C'|$, and so we can place two pebbles on $v_{k+1}$, each at cost at most $2^{\ecc_{G'}(v_{k+1})}$, and therefore at total cost at most $2^{\ecc_{G}(v_{k+1})}$.
    \end{enumerate}
\end{enumerate}

This completes the %upper bound 
proof for the case $t=1$.
%As a consequence, $\Pnk$ is $D$-small.

\subsubsection{Inductive Case: $t\ge 2$.}

Let $C$ be of size $p_t(n,k)-s(D)+1$. We will prove $C$ solves $D$.
If $C(x)\wedge D(x)>0$ for some $x$, then we use a pebble on $x$ to solve a target on $x$, and then use $C-x$ to solve $D-x$ by induction on $t$. 
This is possible because $\ptnk> p_{t-1}(n,k)+2$ for all $d\ge 2$ and all $t$, and because $s(D-x)\ge s(D)-1$, so that
\begin{align*}
    |C-x|
    &= (\ptnk-s(D)+1)-1\\
    &\ge p_{t-1}(n,k)-s(D-x)+1\ .
\end{align*} 
Hence we may assume that $\dC\cap\dD=\mt$ (which in turn  implies that $s(D)<n$), and no vertex adjacent to a target vertex is big.

If $C(v_1)=D(v_1)=0$, then we use induction on $n$, since \begin{align*}
    |C_{[2,n]}|
    &= \ptnk-s(D)+1\\
    &\ge [p_t(n-1,k)-s(D_{[2,n]})+1]+1\ .
\end{align*}
The same can be said by symmetry for $v_n$.
Hence we may assume that each of $v_1$ and $v_n$ have either a pebble or a target on it.

We consider three cases regarding the size of $t$.
Recall that $t_0=t_0(n,k)=k(d-1)/(2^d-2)$.

\begin{enumerate}[listparindent=1.5em]
        
    \item 
    \label{tWide}
    \underline{\bf Wide Case: $t\le t_0$.}
    \bigskip\\
    We consider two cases regarding the size of $d$.
            
    \begin{enumerate}[listparindent=1.5em]

        \item 
        \underline{\bf Subcase: $d=2$.}
        \bigskip\\
        Here we have $t\le k(d-1)/(2^d-2)=k/2$.
        Let $n=k+2+b$ and $|C|=2t+n-2-s+1$, where $s=s(D)$.
        Define $M=\{v_{b+2},\ldots,v_{k+1}\}$ and note that every vertex in $M$ is dominating.
        This means that every pair of potentials can solve any target via pebbling steps through any vertex in $M$.
        Because $\dC\cap\dD=\mt$, we first observe that $z\ge s$, where $z$ is the number of zeros of $C$, which, by Lemma 10, implies that
        \[\pot(C)\ge \left\lceil\frac{(2t-2-s+1)+s}{2}\right\rceil= t\ .\]
        Since $t\ge 2$ and any pair of potentials solves any root $r$ through any vertex of $M$, we have a solution  at cost $4=2^{\ecc(r)}$, % (since $\ecc(r)=2$).
        because any root has eccentricity 2.

        Next, for each $x\in\dD$, define $u(x)=2D(x)$ and, for each potential vertex $y$ of $C$, define $w(y)=2\lfloor C(y)/2\rfloor$ (i.e. twice the number of potentials at $y$).
        Then, since $t\le k/2$, Corollary \ref{c:menger} (with $j=2t\leq k$) yields $2t$ internally disjoint $(D,B)$-paths $\cP$, where $B$ is the set of potentials of $C$.
        Let $\r$ be the number of targets in $\dD$ that have a path in $\cP$ with no zeros on it.
        Then there are $t-\r$ targets in $\dD$, all of whose paths in $\cP$ contain at least one zero; that is, there are at least $2(t-\r)$ zeros of $C$ different from $\supp(D)$.
        Hence
        \begin{align*}
            \pot(C)
            &\ge \left\lceil\frac{(2t-2-s+1)+(s+2t-2\r)}{2}\right\rceil\\
            &= \left\lceil\frac{4t-1-2\r}{2}\right\rceil\\
            &%\ge 
            =2t-\r\\
            &= \r+2(t-\r)\ .
        \end{align*}
        Thus we can solve the remaining $t-\r$ targets via pairs of potentials, which solves $D$.
        This completes the upper bound for the subcase $d=2$ of the case $t\le t_0$.
        As a consequence, $\Pnk$ is $D$-small in this subcase.\\

        \item 
        \underline{\bf Subcase: $d\ge 3$.}
        %Then $\ptnk=w_t(n)=2t+(n-2)$ and so $|C|=2t+n-s(D)-1$. 
        Notice that, if $t'\le t$ and $n'\le n$, then $P_{n'}^{(k)}$ is $t'$-wide: $d'=\diam(P_{n'}^{(k)})\le\diam(\Pnk)=d$, and so
        \[t'\le t\le k\left(\frac{d-1}{2^d-2}\right)\le k\left(\frac{d'-1}{2^{d'}-2}\right),\]
        since $d\ge 3$ and the function $(x-1)/(2^x-2)$ is decreasing for $x>1$.        
        Because $P_{n'}^{(k)}$ is $t'$-wide, we may assume that $P_{n'}^{(k)}$ is $t'$-small for all such $n'$ and $t'$ (except for when $n'=n$ and $t'=t$); that is, given any target configuration $D'$ of size $t'$, among the $D'$-unsolvable configurations of maximum size there exist at least one with $t'-1$ potential movements.
        
        Before proving that $C$ solves $D$, we will prove that $\Pnk$ is $D$-small; i.e. we will prove that there exists a pebbling configuration $C^*$ of maximum size among the $D$-unsolvable configurations s.t. $\pot(C^*)=|D|-1=t-1$.

        Write $D=\{v_{i_1},\ldots,v_{i_t}\}$, with $i_j\le i_{j+1}$ for all $1\le j<t$, and let $C'$ be $D$-unsolvable of maximum size.
        Moreover, among all such configurations, we may choose $C'$ to have the fewest pebbles in $G_{[1,k]}$; i.e. $c=|C'_{[1,k]}|$ is minimum.
        %If $\pot(C')<t$, then we are done, so assume that $\pot(C')\ge t$.
        Also, since $C'$ is $D$-unsolvable, we have $|C'|\geq% |C|=
        |W_D|\geq w_t(n)-s(D)+1$, and so,  from above, we can assume that $\dC'\cap\dD=\mt$, that no big vertex of $C'$ is adjacent to a vertex of $D$, and that each of $v_1$ and $v_n$ have either a target or a pebble.
        %(otherwise, $C'$ would be $D$-solvable).

        %Then
        Observe that $C'$ is $(D-v_{i_j})$-solvable for all $1\le j\le t$.
        Let $\cA=\langle a_1,\ldots,a_m\rangle$ be the pebbling arrangement of $C'+D$, with $m=n+|C'|+|D|$.
        Denote by $C'_{\langle i,j\rangle}$ the portion of $C'$ that is in $\cA_{\langle i,j\rangle}$.
        For each $1\le j\le t$ define $\i_j$ such that $v_{i_j}=a_{\i_j}$, and define $\Clj=C'_{\langle 1,\i_j\rangle}$ and $\Crj=C'_{\langle \i_j,n\rangle}$, with $\Dlj=\{v_{i_1},\ldots,v_{i_j}\}$ and $\Drj=\{v_{i_j},\ldots,v_{i_t}\}$.  See an example in Figure \ref{f:ejemplo}.
        
        \begin{figure}
        \begin{center}
        \begin{floatrow}
        \begin{tikzpicture}[scale=1.5]
        \tikzstyle{every node}=[draw,circle,fill=black,minimum size=1pt,inner sep=2pt]
        \draw node (v1) [label=above: $v_1$] at (0,0) {};
        \draw node (v2) [label=above: $v_2$] at (1,0) {};
        \draw node (v3) [label=above: $v_3$] at (2,0) {};
        \draw node (v4) [label=above: $v_4$] at (3,0) {};
        \draw node (v5) [label=above: $v_5$] at (4,0) {};
        \draw node (v6) [label=above: $v_6$] at (5,0) {};
        \draw (v1) -- (v2) -- (v3) -- (v4) -- (v5) -- (v6);
        \draw (v1) to[out=60,in=120] (v3);
        \draw (v2) to[out=60,in=120] (v4);
        \draw (v3) to[out=60,in=120] (v5);
        \draw (v4) to[out=60,in=120] (v6);
        \draw plot[only marks,mark=x,mark size=2pt] coordinates {(0,-.2) (0,-.4) (2,-.2) (2,-.4) (3,-.2)};
        \draw node [fill=none] at (1,-.2) {};
        \draw node [fill=none] at (1,-.4) {};
        \draw node [fill=none] at (1,-.6) {};
        \draw node [fill=none] at (1,-.8) {};
        \draw node [fill=none] at (4,-.2) {};
        \draw node [fill=none] at (4,-.4) {};
        \draw node [fill=none] at (4,-.6) {};
        \draw node [fill=none] at (5,-.2) {};
        \end{tikzpicture}
        \hspace{0.5in}
        \begin{tabular}{cl}
        $\times$&$D=\{v_1^2,v_3^2,v_4\}$\\
        &\\
        $\ocircle$&$C=\{v_2^4,v_5^3,v_6\}$\\
        &\\
        &\\
        &\\
        &\\
        \end{tabular}
        \end{floatrow}
        \vspace{-0.5in}
        $$\cA(C+D)=\langle v_1, v_{1,1}, v_{1,2}, v_2, v_{2,1}, v_{2,2}, v_{2,3}, v_{2,4}, v_3, v_{3,1}, v_{3,2}, v_4, v_{4,1}, v_5, v_{5,1}, v_{5,2}, v_{5,3}, v_6, v_{6,1} \rangle$$
        \begin{tabular}{c|r}
            $j$&$\i_j$  \\
            \hline
            1&2\\
            2&3\\
            3&10\\
            4&11\\
            5&13\\
        \end{tabular}
        \hspace{0.5in}
        \begin{tabular}{l}
            $D'_{L_1}=\{v_1\}$\\
            \\
            $D'_{L_2}=\{v_1^2\}$\\
            \\
            $D'_{R_4}=\{v_3,v_4\}$
        \end{tabular}
        \hspace{0.5in}
        \begin{tabular}{l}
            $C'_{L_1}=\{\}$\\
            \\
            $C'_{R_2}=\{v_2^4,v_5^3,v_6\}$\\
            \\
            $C'_{L_4}=\{v_2^4\}$
        \end{tabular}
        \end{center}
        \caption{Example for $G=P_6^{(2)}$ with $t=5$, $v_{i_5}=v_4$, and $D$ does not precede $C$.} 
        \label{f:ejemplo}
        \end{figure}

        If some $1<j<t$ has $\Clj$  $\Dlj$-unsolvable and $\Crj$  $\Drj$-unsolvable then, by induction on $t$ there exist maximum $\Dlj$-unsolvable $\Cljs$ that is $\Dlj$-small, and maximum $\Drj$-unsolvable $\Crjs$ that is $\Drj$-small.
        By Corollary \ref{c:canonKn} and Lemma \ref{l:canonical}, we may choose both $\Cljs$ and $\Crjs$ to be canonical.
        Because $\dC'\cap\dD=\mt$, we have
        %$d\ge 3$, Fact \ref{f:WideSupport} implies that 
        $s(D)<n$, which implies that either $s(\Dlj)<i_j$ or $s(\Drj)<n-i_j+1$.
        By symmetry, we will assume the former, from which follows that $\Cljs(v_{i_j})=0$.
        This implies that $\dC^*_{L_j}\cap \dC^*_{R_j}=\mt$.
%
        %Then $C^*=\Cljs+\Crjs$ is maximum $D$-unsolvable \la{see}(since $|C^*| = |\Cljs|+|\Crjs| \ge |\Clj|+|\Crj| = |C'|$, and $C'$ is maximum $D$-unsolvable --- the second equality holds because $C(v_{i_j})=0$, by assumption) and $D$-small, since %$\pot(C^*) = \pot(\Cljs)+\pot(\Crjs)=(j-1)+(t-j)=t-1$.
%
        Then $C^*=\Cljs+\Crjs$ is  $D$-unsolvable and $D$-small,  since $\pot(C^*) = \pot(\Cljs)+\pot(\Crjs)=(j-1)+(t-j)=t-1$. 
        And it is of maximum size, since $|C^*| = |\Cljs|+|\Crjs| \ge |\Clj|+|\Crj| = |C'|$ and $C'$ is maximum $D$-unsolvable (the second equality holds because $C'(v_{i_j})=0$, by assumption).
       
        Thus, for all $1<j<t$, either $\Clj$ is $\Dlj$-solvable or $\Crj$ is $\Drj$-solvable.         
        The same holds true if $j\in\{1,t\}$ and $v_{i_j}$ is not simplicial,
        %\la{now I do not see why it has to be non simplicial.  READY, I SAW IT}, 
        although induction is on $n$ instead of $t$.

        If $v_{i_1}$ is not simplicial, then $C'_{L_1}$ must solve $D_{L_1}$. Let $h$ be the biggest $j$ such that  $C'_{L_j}$  solves $D_{L_j}$. Clearly,  $h<t$ and   $C'_{R_{h+1}}$ does not solve $D_{R_{h+1}}$, which implies that either $h+1=t$ and $v_{i_t}=v_n$ is simplicial, or  $C'_{L_{h+1}}$ solves $D_{L_{h+1}}$ in contradiction to  the choice of $h$.  We conclude that if $v_{i_1}$ is not simplicial, then $v_{i_t}$ is. Therefore,  we may assume that the graph is labeled so that $v_1\in\dD$.
        
        First suppose that $c=0$ and set $C''=C'_{[k+1,n]}=C'$.
        For $G=\Pnk$, let $G'=G_{[k+1,n]}\cong P_{n-k}^{(k)}$; then $G'$ has diameter $d'=d-1$.
        In this case we use induction on $n$.
        We simplify notation somewhat; $D_L=D_{[1,k]}$, $D_R=D_{[k+1,n]}$, $t_L=|D_L|$, and $t_R=|D_R|=t-t_L$.
        Then set $n'=n-k$ and $t'=t+t_L$.
        
        If $G'$ is $t'$-wide then define $D'=D_R+2t_Lv_{k+1}$; then $|D'|=t'$.
        Note that $s(D')=s(D)-s(D_L)+\e$ for some $\e\in\{0,1\}$.
        Hence $s(D)-s(D')+2t_L = s(D_L)-\e+2t_L \le 3t_L \le 3t \le k$, so that $-s(D)\ge -k+2t_L-s(D')$.
        Therefore we have
        \begin{align*}
            |C''|
            &= |C'|\\
            &\geq n+2t-2-s(D)+1\\
            &\ge (n-k)+2(t+t_L)-2-s(D')+1\\
            &= n'+2t'-2-s(D')+1,
        \end{align*}
        and so $C''$ is $D'$-solvable in $G'$, which implies that $C'$ solves $D$ in $G$, a contradiction.
        
        If $G'$ is $t'$-long then define $D'=D_R+2D_L^{+k}$, where $D_L^{+k} = \sum_{i=1}^k D(v_i)v_{k+i}$ so that $s(D')=s(D)-s(D_L^{+k}\cap D_R)$.
        Now we have
        \begin{align*}
            |C''|
            &= |C'|\\
            &\ge n+2t-2-s(D)+1\\
            &= k(d-1)+b+2t-s(D)+1\\
            &\ge t(2^d-2)+b+2t-s(D)+1\\
            &\ge t2^d+b-s(D')+1 \\ 
            &= (t+t_L)2^{d-1} + t_R2^{d-1} + b - s(D') - s(D_L^{+k}\cap D_R) + 1\\
            &= [t'2^{d'} + b - s(D') + 1] + [t_R2^{d'} - s(D_L^{+k}\cap D_R)]\\
            &\ge [t'2^{d'} + b - s(D') + 1] + [4t_R - 3t_R]\\
            &\ge t'2^{d'} + b - s(D') + 1.
        \end{align*}
        Hence $C'$ is $D'$-solvable in $G'$, which implies that $C'$ solves $D$ in $G$, a contradiction.
        
        These contradictions imply that $c>0$.
        We now define $j$ to be minimum so that $v_j\in\dC'$, with $C''=C-v_j$ and $D'=D+v_j-v_1$.
        If $D(v_1)=1$ then $s(D')=s(D)$ and $D'(v_1)=0$, so set $G'=G-v_1\cong P_{n'}^{(k)}$, where $n'=n-1$.
        Then
        \begin{align*}
            |C''|
            &= |C'|-1\\
            &= (n+2t-2-s(D)+1)-1\\
            &= n'+2t-2-s(D')+1,
        \end{align*}
        which means, since $P_{n'}^{(k)}$ is $t$-wide, that $C''$ is $D'$-solvable on $G'$ by induction on $n$.
        If instead $D(v_1)>1$ then $s(D')=s(D)+1$ and $c'=|C''_{[i_t+1,k+1]}|=c-1$, and so
        \begin{align*}
            |C''|
            &= |C'|-1\\
            &= (n+2t-2-s(D)+1)-1\\
            &= n+2t-2-s(D')+1,
        \end{align*}
        which means that $C''$ is $D'$-solvable on $G$ by minimality of $c$.
        In either case, because $j\le k+1$, the pebble placed by $C''$ on $v_j$, along with the pebble of $C'$ already on $v_j$, can reach $v_1$, which makes $C'$ $D$-solvable, a contradiction.

        This final contradiction implies that $C'$ is $D$-small, which completes the proof that $\Pnk$ is $D$-small.

        We complete the proof of this subcase of Theorem \ref{t:Dsolve} as follows.
        Since $\Pnk$ is $D$-small, we have that $\p(\Pnk,D)=1+|C'|$ s.t. $C'$ is $D$-small.
        It is easy to see that if $C'$ is $D$-small, then $|C'|\le n-s(D)+2\pot(C')\le n-s(D)+2(t-1)$, and thus  $\p(\Pnk,D)\le 1+ n-s(D)+2(t-1)=|C|$, which implies the result.

    \end{enumerate}

    \item
    \label{tMid}
    \underline{\bf Intermediate Case: $t_0<t=\lceil t_0\rceil$.}
    \bigskip

    \begin{enumerate}[listparindent=1.5em]

        \item 
        \underline{\bf Subcase: $d=2$.}
        \bigskip\\
        We note that this case follows the same argument as in the $d=2$ subcase of the wide case $t\le t_0$; only the size of $C$ is different.
        
        Here we have $t=(k+1)/2$ for some odd $k$.
        Let $n=k+2+b$ and $|C|=4t+b-s+1$, where $s=s(D)$.
        Define $M=\{v_{b+2},\ldots,v_{k+1}\}$ and note that every vertex in $M$ is dominating.
        This means that every pair of potentials can solve any target via pebbling steps through any vertex in $M$.
        We first observe that $z\ge s$ , which implies that
        \begin{align*}
            \pot(C)
            &\ge \left \lceil \frac{(4t+b-s+1-n)+s}{2} \right \rceil\\
            &= t + \left \lceil \frac{2t+b+1-n}{2} \right \rceil\\
            &= t\ .
        \end{align*}
       % Since $t\ge 2$, any pair of potentials solves any root $r$ through any vertex of $M$ (since $\ecc(r)=2$) at cost $4=2^{\ecc(r)}$.
        Since $t\ge 2$ and any pair of potentials solves any root $r$ through any vertex of $M$, we have a solution  at cost $4=2^{\ecc(r)}$, % (since $\ecc(r)=2$).
        because any root has eccentricity 2.
        
        Next, for each $x\in\dD$, define $u(x)=2D(x)$ and, for each potential vertex $y$ of $C$, define $w(y)=2\lfloor C(y)/2\rfloor$ (i.e. twice the number of potentials at $y$).
        Then, since $t=(k+1)/2$, Corollary \ref{c:menger} (with $j=2t-1\leq k$)  yields $2t-1=k$ internally disjoint $(D,B)$-paths $\cP$, where $B$ is the set of potentials of $C$.
        Let $\r$ be the number of targets in $\dD$ that have a slide in $\cP$. %HERE WE CAN USE j=2t AS IN THE PREVIOUS CASE BECAUSE WE NEED j\leq k=2k-1
        Then there are $t-\r$ targets in $\dD$, all of whose paths in $\cP$ contain at least one zero; that is, there are at least $2(t-\r)-1$ zeros of $C$ different from $\supp(D)$.
        Hence
        \begin{align*}
            \pot(C)
            &\ge \left\lceil\frac{(4t+b-s+1-n)+(s+2t-2\r-1)}{2}\right\rceil\\
            &= \left\lceil\frac{4t-2\r-1}{2}\right\rceil\\
            &= 2t-\r\\
            &= \r+2(t-\r)\ .
        \end{align*}
        Thus we can solve the remaining $t-\r$ targets via pairs of potentials, which solves $D$.\\

        \item 
        \underline{\bf Subcase: $d\ge 3$.}
        \bigskip\\
        Now we have $\ptnk = l_t(n,k) = t2^d+b = t2^d+(n-2)-k(d-1)$.
        Let $|C|=l_t(n,k)-s(D)+1$.
        
        Note that, because $t=\lceil t_0\rceil$, for any $n'=k(d'-1)+2+b$ for some $0\le b<k$, $P_{n'}^{(k)}$ is $t'$-wide for all $t'\le t$ and $d'=\diam(P_{n'}^{(k)})\le\diam(\Pnk)=d$, 
        provided that either $t'<t$ or $d'<d$:
        \[\left(\frac{d-1}{2^d-2}\right)\le \left(\frac{d'-1}{2^{d'}-2}\right),\]
        and thus 
        \[t'\le t = \lceil t_0(k,d)\rceil = \left\lceil k\left(\frac{d-1}{2^d-2}\right)\right\rceil \le \left\lceil k\left(\frac{d'-1}{2^{d'}-2}\right)\right\rceil = \lceil t_0(k,d')\rceil .\]
        Notice that we get a strict inequality if either $t'<t$ or $d'<d$, which would then imply that $t'\le t_0(k,d')$; i.e. $P_{n'}^{(k)}$ is $t'$-wide.
        
        Let $\cA=\langle a_1, \ldots, a_m\rangle$ be the pebbling arrangement for the pebbling function $C\vee D$.
        For $1\le i<j\le m$, 
        let $C_{\langle i,j\rangle}$ (resp., $D_{\langle i,j\rangle}$) refer to only those pebbles (resp., targets) of $(C\vee D)_{\langle i,j\rangle}$ that correspond to $C$ (resp., $D$).
        Now set $s_{\langle i,j\rangle}=|\supp(D_{\langle i,j\rangle})|$, let $h_j$ be such that $a_j\in V_{h_j}$, 
        and define 
        \[\D(j)=|C_{\langle 1,j\rangle}|-p_{|D_{\langle 1,j\rangle}|}(h_j,k)+s_{\langle 1,j\rangle}-1\]
        Notice that $1\le h_j \le n-b-1$ implies that
        \[\D(j)=|C_{\langle 1,j\rangle}|-2|D_{\langle 1,j\rangle}|-h_j+s_{\langle 1,j\rangle}+1,\]
        because $G_{[1,h_j]}$ is $|D_{\langle 1,j\rangle}|$-wide in that range.
        Similarly, the analogous formula holds for $G_{\langle j+1,m\rangle}$ %$G_{[h_j,n]}$ 
        for $b \leq h_{j+1}\le n$ because it is $|D_{\langle j+1,m\rangle}|$-wide in that range.
        The function $\D$ serves as an indicator: $\D(j)\ge 0$ implies that $C_{\langle 1,j\rangle}$ solves $D_{\langle 1,j\rangle}$.
        If $\D(j)$ is too large, $|C_{\langle j+1,m\rangle}|$ may be too small to solve $D_{\langle j+1,m\rangle}$.
        However, if $\D(j)=\e\in\{0,1\}$ for some $j$ for which both $G_{\langle 1,j\rangle}$ is $|D_{\langle 1,j\rangle}|$-wide and $G_{\langle j+1,m\rangle}$ is $|D_{\langle j+1,m\rangle}|$-wide, then we can show that $C_{\langle j+1,m\rangle}$ solves $D_{\langle j+1,m\rangle}$.
        Indeed,
        \begin{align*}
            |C_{\langle j+1,m\rangle}|
            &= |C|-|C_{\langle 1,j\rangle}|\\
            &= (t2^d+b-s(D)+1)-(2|D_{\langle 1,j\rangle}|+h_j-s_{\langle 1,j\rangle}-1+\e)\\
            &\ge (2t+n-2-s(D)+2)-(2|D_{\langle 1,j\rangle}|+h_j-s_{\langle 1,j\rangle}-1+\e)\\
            &= 2(t-|D_{\langle 1,j\rangle}|) + (n-h_j) - (s(D)-s_{\langle 1,j\rangle})+(1-\e)\\
            &\ge 2|D_{\langle j+1,m\rangle}| + (n-h_j+1)-2 -s_{\langle j+1,n\rangle} + 1\\
            &\ge p_{|D_{\langle j+1,m\rangle}|}(G_{\langle j+1,m\rangle})-s_{\langle j+1,m\rangle}+1\\
            &\geq \pi(P_{n-h_{j-1}+1}^{(k)},D_{\langle j+1,m\rangle}).\
        \end{align*}

        Therefore, we aim to show that such a $j$ exists.        
        Let $j^*$ be maximum such that $G_{\langle 1,j*\rangle}$ is $|D_{\langle 1,j*\rangle}|$-wide.
        Then $G_{\langle 1,j\rangle}$ is $|D_{\langle 1,j\rangle}|$-wide for all $1\le j\le j^*$. 
        Thus
        \begin{equation}
            \label{e:delta1}    
            \D(j)-\D(j-1)=\left\{
            \begin{tabular}{ll}
                $-1$&for $a_j=v_h$, for some $h$,\\
                $+1$&for $a_j\in \dC$,\\
                $-1$&for $a_j\in \dD$ and $a_j=v_{h,1}$, for some $h$, and\\
                $-2$&for $a_j\in \dD$ and $a_j=v_{h,\ell}$, for some $h$ and some $\ell>1$.
            \end{tabular}\right.
        \end{equation}
        Moreover, for $j>j^*$, even in the cases for which $G_{\langle 1,j\rangle}$ is $|D_{\langle 1,j\rangle}|$-long, one can see that
        \begin{equation}
            \label{e:delta2}
            \D(j)-\D(j-1)>0\ \text{if and only if}\ a_j\in\dC,\ \text{in which case}\ \D(j)-\D(j-1)=1.
        \end{equation}
        Indeed, assume that $G_{\langle 1,j\rangle}$ is $t$-long.
        \begin{itemize}
            \item 
        Consider when $|D_{\langle 1,j-1\rangle}|=t$.
        If $G_{\langle 1,j-1\rangle}$ is $t$-long then $a_j\in V(G)\cup\dC$ since $D_{\langle j,m\rangle}$ is empty.
        In this case, 
        \begin{equation*}
            \label{e:delta3}    
            \D(j)-\D(j-1)=\left\{
            \begin{tabular}{ll}
                $-1$&for $a_j=v_h$, for some $h$, and\\
                $+1$&for $a_j\in \dC$.
            \end{tabular}\right.
        \end{equation*}
        If $G_{\langle 1,j-1\rangle}$ is $t$-wide then $a_j\in V_{(d-1)k+2}$, and so $\D(j)-\D(j-1)\le (d-1)k-t(2^d-2)-1\le -2$.
            \item
        Now consider when $|D_{\langle 1,j-1\rangle}|<t$.
        Then $G_{\langle 1,j-1\rangle}$ is $(t-1)$-wide, and so $a_j\in\dD$.
        In this case, 
        \begin{equation*}
            \label{e:delta4}    
            \D(j)-\D(j-1)\le \left\{
            \begin{tabular}{ll}
                $-2$&for $a_j\in \dD$ and $a_j=v_{h,1}$, for some $h$, and\\
                $-3$&for $a_j\in \dD$ and $a_j=v_{h,\ell}$, for some $h$ and some $\ell>1$.
            \end{tabular}\right.
        \end{equation*}
        \end{itemize}
        This proves (\ref{e:delta2}).
        
        Suppose that $D(v_1)>0$ and $D(v_n)>0$.
        Then $\D(2)=-1$ and $\D(m)=0$.
        Let $j'$ be minimum such that $\D(j')=0$, with $a_j\in V_h$.
        Because of Equations (\ref{e:delta1}) and (\ref{e:delta2}), we must have both $a_{j'-1}$ and $a_{j'}$ in $\dC$. 
        Indeed, $\D(j')=0$ implies that $\D(j'-1)=-1$ and $\D(j'-2)=-2$ by the definition of $j'$. This means that $\D(j')-\D(j'-1)=1$ and $\D(j'-1)-\D(j'-2)=1$, so that $a_{j'}$ and $a_{j'-1}$ are both pebbles; i.e. $v_h$ is big.
        Since no big vertex is adjacent to a target, we have $k+1<h<n-k$, which proves the existences of the desired $j$.
        
        Now suppose that $D(v_1)=0$ or $D(v_n)=0$.
        By symmetry we will assume that $D(v_1)=0$.
        Suppose that $v_h\in\dD$ for some $1<h\le k+1$.
        Because no big vertex is adjacent to a target vertex, $C(v_i)\le 1$ for all $i<h$; thus $|C_{[1,h-1]}|\le h-1$.
        Therefore 
        \begin{align*}
        |C_{[h,n]}|
        &= |C| - |C_{[1,h-1]}|\\
        &\ge (t2^d+n-2-k(d-1) -s(D)+1) - (h-1)\\
        &= t2^d + (n-h+1) -2 - k(d-1) -s(D)+1.
        \end{align*}
        If $h\le b$ then $P_{n-h+1}$ is $t$-long, and so this value equals $p_t(P_{n-h+1}^{(k)})-s(D)+1$, implying that $C_{[h,n]}$ solves $D$ by induction on $n$.
        If $b<h\le k+1$ then $P_{n-h+1}$ is $t$-wide, and so this value is at least $(n-h+1)+2t-2-s(D)+1 = p_t(P_{n-h+1}^{(k)})-s(D)+1$, implying that $C_{[h,n]}$ solves $D$ by induction on $n$.

        Thus we may assume that $D_{[1,k+1]}$ is empty.
        Notice that the above arithmetic also proves that if $h$ is the minimum index such that $v_h\in\dD$, then $|C_{[1,h-1]}|\ge h$: if $|C_{[1,h-1]}|\le h-1$ then remove $G_{[1,h-1]}$; then $C_{[h,n]}$ solves $D$ by induction on $n$, because $G_{[h,n]}$ is $t$-wide, since $h>k+1$.
        We will make use of this below.
        
        Let $j_0$ be minimum so that $a_{j_0}\in\dD$; thus $a_{j_0}\in V_h$.
        Because $|C_{\langle 1,j_0\rangle}| = |C_{[1,h-1]}|\ge h\ge \p(G_{\langle 1,j_0\rangle},v_h)$ ($G_{\langle 1,j_0\rangle}$ is 1-wide because $1<t$), we have $\D(j_0)\ge 0$.
        If $\D(j_0)\in\{0,1\}$ then we are done; thus we will assume that $\D(j_0)>1$.
        Now let $j^*$ be maximum so that $a_{j^*}\in\dD$ and let $h^*$ be such that $a_{j^*}\in V_{h^*}$.
        Of course, if $\D(j^*)\ge 0$ then we are done, so assume otherwise; i.e. $\D(j^*)<0$.
        
        By the definition of $j^*$, we have that $G_{\langle 1,j\rangle}$ is $|D_{\langle 1,j\rangle}|$-wide for all $1\le j<j^*$.
        Thus Equation (\ref{e:delta1}) applies.
        Because $\D$ never decreases by more than 2, this implies the existence of some $j_0\le j'<j^*$ with $\D(j')\in\{0,1\}$, completing the proof in this case.
    \end{enumerate}

        \item 
        \label{tLong}
        \underline{\bf Long Case: $t>\lceil t_0\rceil$.}
        \bigskip\\
        Here we have $\ptnk=l_t(n,k)=t2^d+(n-2)-k(d-1)$.
        Let $|C|=l_t(n,k)-s(D)+1$.
        In this case, $t-1\ge\lceil t_0\rceil$, and so $(t-1)(2^d-2)\ge k(d-1)$, which implies that $t2^d-k(d-1)\ge 2^d-2+2t\ge 2^d-d+1$ since $2t+d\ge 3$.
        Hence
        \begin{equation}
            \label{e:cheapcompare}
            t2^d+n-2-k(d-1)\ge 2^d+n-d-1.
        \end{equation}

        Let $T_i$ be a breadth-first-search spanning tree of $G$, rooted at $v_i$ (with $T_1$ isomorphic to a caterpillar with main path of length $d$), then $\p(T_i,v_i)\le \p(T_1,v_1)=2^d+n-d-1$.
        Then inequality (\ref{e:cheapcompare}) implies that $|C|\ge \p(T_1,v_1)-s(D)+1\ge \p(T_i,v_i)-s(D)+1$ and so, by Theorem \ref{t:tree}, $C$ solves any target $v_i\in\dD$ along $T_i$ with cost at most $2^{\ecc_{T_i}(v_i)}$.

        If some $x\in\dD$ has $D(x)\ge 2$ or has $\ecc_{T_i}(x)<d$, then
        \begin{equation}
        \label{e:cheap}
            \begin{tabular}{lcl}
            $|C-C[\s_x]|$
            &$\ge$& $(p_t(n,k)-s(D)+1)-2^{\ecc_{T_i}(x)}$\\
            &$=$& $(p_{t-1}(n,k)-s(D-x)+1)+(2^d-2^{\ecc_{T_i}(x)})+(s(D-x)-s(D))$\\
            &$\ge$& $\p(G,D-x)$,
            \end{tabular}
        \end{equation}
        and so $C-\s_x$ solves $D-x$.

        Hence $D$ consists of singleton targets only, each with eccentricity $d$.
        That is, $D=D_{[1,b+1]}\cup D_{[n-b,n]}$; i.e. $\dD\ \sse\ V_{[1,b+1]}\cup V_{[n-b,n]}$.
        Without loss of generality, we may assume that $\dD_{[1,b+1]}\not=\mt$.
        In addition, if $\cost(\s_{v_i})=2^d$ for $i\in [1,b+1]$ then every potential vertex $v_j$ in $C[\s_{v_i}]$ is at distance $d$ from $v_i$; that is, $j\in [n-b+i-1,n]$.
        Since no big vertex is adjacent to a target, we then have $\dD_{[n-b,n]}=\mt$; i.e. $D=D_{[1,b+1]}$.
        Furthermore, if any length-$d$ path $P$ from a {\it huge} ($C(v)\ge 2^{\dist(v,r)}$ for some $r\in\dD$) vertex $v$ to some target $r$ contains an interior pebble, then there is a super-cheap $r$-solution from $v$ along $P$.
        The calculations of Inequality (\ref{e:cheap}) show that the existence of a cheap solution yields a $t$-fold solution by induction on $t$.

        Now, if $v_j$ is big, then $j\in [n-b+i-1,n]$, so define $C'$ by $C'(v_j)=C(v_j)-2$, $C'(v_n)=C(v_n)+2$, and $C'(v)=C(v)$ otherwise.
        Then any $(C',D)$-solution can be converted to a $(C,D)$-solution because $N(v_n)\sse N[v_j]$.
        Thus, if $C$ is not $D$-solvable, then neither is $C'$.
        In other words, we may assume that the only big vertex of $C$ is $v_n$.

        This implies that, for every $v_i\in\dD$, the {\it rail} $R_i=(v_i,v_{i+k},\ldots,v_{i+(d-1)k})$ %is empty
        has no pebbles.
        Let $j$ be minimum such that $v_j\in\dD$.
        Now define the graph $G'=G-\{R_i\}_{v_i\in\dD-v_j}$, with corresponding configuration $C'=C_{G'}$, and notice that $G'\cong P_{n'}^{(k')}$, where $n'=n-(t-1)d$ and $k'=k-(t-1)$.
        In addition, $b'=b-(t-1)$ and $d'=d$.
        By induction on $n$, we have that
        \begin{align*}
                |C'|
            &= |C|\\
            &= (t2^d+b)-s(D)+1\\
            &= t2^{d'}+b'\\
            &= p_t(n',k').
        \end{align*}
        which implies that $C'$ is $t$-fold $v_j$-solvable on $G'$.
        Equivalently, $C$ is $t$-fold $v_j$-solvable on $G$, using only the vertices of $G'$.
        Because $G_{[1,k+1]}$ is a clique, any step $v_{i'}\peb v_j$ can be converted instead to $v_{i'}\peb v_{j'}$ for any $v_{j'}\in\dD$, thus solving $D$.
\end{enumerate}
This completes the proof.
\end{proof}

%%%%%%%%%%%%%%%%%%%%%%%%%%%%%%%%
%           PEBBLING NUMBER
%%%%%%%%%%%%%%%%%%%%%%%%%%%%%%%%
\section{Proof of Theorem \ref{t:main}}
\label{s:thm}

\begin{lem} 
\label{l:down_simpl}
If $r$ is a simplicial vertex of $\Pnk$, then $\pi_t(\Pnk,r)\geq \ptnk$.
\end{lem}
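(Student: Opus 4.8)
The plan is to fix a simplicial target, which we may take to be $r=v_1$ (the case $v_n$ being symmetric), and to exhibit two explicit $t$-fold $v_1$-unsolvable configurations, one of size $\wtn-1$ and one of size $\ltnk-1$. Since $\ptnk=\max\{\ltnk,\wtn\}$, producing both witnesses gives $\pi_t(\Pnk,v_1)\ge\wtn$ and $\pi_t(\Pnk,v_1)\ge\ltnk$ simultaneously, hence $\pi_t(\Pnk,v_1)\ge\ptnk$, which is the claim. (For $n=1$ the statement reduces to the degenerate convention $p_t(1,k)=t-1$ and is handled separately; we assume $n\ge2$.) Throughout I use the layering of $G=\Pnk$ by distance to $v_1$: writing $L_j=\{v_i:\dist(v_i,v_1)=j\}$, one has $L_0=\{v_1\}$, each $L_j$ is a clique, edges join only equal or consecutive layers, and the top layer $L_d=\{v_{(d-1)k+2},\ldots,v_n\}$ has exactly $b+1$ vertices.

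For the bound $\pi_t\ge\wtn$ I would use the configuration $W_D$ for $D=tv_1$: it places $2t-1$ pebbles on $v_2$, one pebble on every other vertex except $v_1$, and none on $v_1$, so that $|W_D|=2t+n-3=\wtn-1$. Since $\pot(W_D)=t-1$ and $v_1$ is empty, Fact \ref{f:pot} shows $W_D$ is not $t$-fold $v_1$-solvable, giving $\pi_t(\Pnk,v_1)\ge\wtn$.

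For the bound $\pi_t\ge\ltnk$ I would take the configuration $C^\star$ supported entirely on $L_d$: put $t2^d-1$ pebbles on $v_n$ and a single pebble on each of the remaining $b$ vertices of $L_d$, so $|C^\star|=t2^d-1+b=\ltnk-1$. The delicate point, and the main obstacle, is that the naive weight function $w(C)=\sum_v C(v)2^{-\dist(v,v_1)}$ gives $w(C^\star)=t+(b-1)2^{-d}\ge t$ when $b\ge1$, so it does not by itself forbid delivering $t$ pebbles; the singletons must be shown to be unusable. I would resolve this as follows. Because $G$ is chordal, Lemma \ref{l:Greedy} lets me assume a semi-greedy solution, in which no step increases the distance to $v_1$; hence no pebble ever enters $L_d$, and $L_d$ can only export pebbles to $L_{d-1}$. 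Tracking the monovariant $I=\sum_{v\in L_d}\lfloor C(v)/2\rfloor$, each export step decreases $I$ by exactly $1$, each move internal to the clique $L_d$ decreases $I$ by $0$ or $1$, and no move increases it; since $I\ge0$, the total number of pebbles ever leaving $L_d$ is at most its initial value $\lfloor(t2^d-1)/2\rfloor=t2^{d-1}-1$ (the $b$ singletons contribute nothing). Every pebble that reaches $v_1$ descends from one of these exports, each carrying weight $2^{-(d-1)}$ at the moment it lands in $L_{d-1}$, and weight is non-increasing thereafter, so the number of pebbles arriving at $v_1$ is at most $(t2^{d-1}-1)2^{-(d-1)}=t-2^{1-d}<t$. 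Thus $C^\star$ is not $t$-fold $v_1$-solvable, completing the lower bound.

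I expect the monovariant/semi-greedy step to be the crux, since it is exactly where the parity waste in the clique $L_d$, invisible to the plain weight function, is exploited; the remaining verifications (that the layers are cliques of the stated sizes, and the bookkeeping of $I$) are routine. A pleasant consistency check is the case $d=1$: there $G=K_n$, the layer $L_1$ is everything but $v_1$, and the export bound collapses to $\pot(C^\star)=t-1$, recovering the potential argument and matching the witness $W_D$.
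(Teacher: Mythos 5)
Your proof is correct, and it rests on exactly the two witness configurations used in the paper's own proof: your $W_D$ coincides with the paper's $W_{t,n}$ up to the immaterial choice of which non-target vertex carries the stack of $2t-1$ pebbles, and your $C^\star$ is literally the paper's $L_{t,n,k}$ (singletons on the $b$ remaining top-layer vertices, $t2^d-1$ pebbles on $v_n$). The $\wtn$ bound is argued identically, via $\pot(W_D)=t-1$ and Fact~\ref{f:pot}. The genuine difference is in how you prove $C^\star$ is $t$-fold $v_1$-unsolvable. The paper stays entirely inside the potential formalism: $\pot(L)=t2^{d-1}-1$ bounds the number of pebbles that can ever be placed at distance $d-1$ from $v_1$, and an induction down the layers gives at most $t2^{d-j}-1$ pebbles at distance $d-j$, hence at most $t-1$ on $r$. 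You instead invoke chordality through the semi-greedy Lemma~\ref{l:Greedy} to rule out any pebble re-entering the top layer, bound the exports by the parity monovariant $I$, and finish with a weight-function argument below the top layer. What your route buys is an explicit treatment of the point the paper leaves implicit --- why back-flow into the top layer can never rescue the $b$ wasted singletons; what it costs is reliance on the chordal machinery, and note that you need Lemma~\ref{l:Greedy} in its $t$-fold form, which is stated for a single target but is exactly how the paper itself applies it (see the proof of Lemma~\ref{l:C'}). Both arguments establish the same key flow bound of $t2^{d-1}-1$ pebbles leaving the top layer, so the proofs are equivalent in content; the paper's is shorter, yours is more self-contained in its justification.
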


\begin{proof}
Let $r$ be the simplicial vertex $v_1$ of $P^{(k)}_n$. 
For $t\ge 1$ we define the configurations $W=\Wtn$  and $L=\Ltnk$ on $P^{(k)}_n$ with target $r$ as follows. 
\vspace{0.2in}

\begin{center}

$W(v_i)=\left\{ \begin{array}{rl}
0,  & \hbox{ for } i=1;\\
1, & \hbox{ for } 2\leq i \leq n-1;\\
 2t-1, & \hbox{ for } i=n.  
 \end{array}
\right.$
\hspace{0.2in}
$L(v_i)=\left\{
  \begin{array}{rl}
0, & \hbox{ for } 1\leq i\leq (d-1)k+1;\\
    1, & \hbox{ for }(d-1)k+1< i \leq n-1;\\
    t2^{d}-1, & \hbox{ for } i=n.  \end{array}
\right.$
\end{center}
\vspace{0.2in}

Notice that $|W|=\wtn-1$ and $|L|=\ltnk-1$.
Let $V_i$ denote the vertices at distance $i$ from $v_1$.

Since $\pot(W)=t-1$, Fact \ref{f:pot} shows that $W$ is $t$-fold  $r$-unsolvable.

Since $\pot(L)=t2^{d-1}-1$, $L$ can only place at most $t2^{d-1}-1$ pebbles on $V_{d-1}$.
If $d=1$ then $\pot(L)=t-1$, so Fact \ref{f:pot} shows that $L$ is $t$-fold $r$-unsolvable.
If $d>1$ then, by induction on $d$, $L$ can only place at most $t2^{d-j}-1$ pebbles on $V_{d-j}$ for each $1\le j\le d$.
Thus $L$ can only place at most $t-1$ pebbles on $r$, showing that $L$ is $t$-fold $r$-unsolvable.
\end{proof}

\noindent
{\it Proof of Theorem \ref{t:main}.}
Lemma \ref{l:down_simpl} provides the lower bound.
Theorem \ref{t:Dsolve} provides the upper bound.
\hfill\pf

%%%%%%%%%%%%%%%%%%%%%%%%%%%%%%%%%%%%%%%%%%%%%%%
%       PROOF OF COROLLARY
%%%%%%%%%%%%%%%%%%%%%%%%%%%%%%%%%%%%%%%%%%%%%%%
\section{Proof of Corollary \ref{c:expo}} 
\label{s:Correct}

\begin{proof}
The values for $n\le 8$ are easy to check, so we assume that $n\ge 9$.
We know that $k$ and $d$ must be related to $n$ by $k(d-1)=n-2-b$, for some $0\le b<k$; that is,
\begin{equation}
\label{e:one}
    n-2-k < k(d-1) \le n-2.
\end{equation}
Then $\ep(P_n)$ is the least value of $k$ for which this relation holds with
\begin{equation}
\label{e:two}
    k\ge (2^d-2)/(d-1).
\end{equation}

Given $n\ge 9$, define $\k = M(n)+1 = \lfloor (n-2)/(\lceil\lg n\rceil -2)\rfloor+1$ and $\d=\lfloor (n-2)/\k\rfloor +1$, so that % (\ref{e:one}) holds
when $k=\k$ we have % and 
$d=\d$.
Thus the upper bound follows by showing that (\ref{e:two}) also holds with these values of $k$ and $d$.

We first observe that the definition of $\k$ implies that
\begin{equation}
    n-2 = (\k-1)(\lceil\lg n\rceil-2)+\ell,
\end{equation}
for some $0\le \ell<\lceil\lg n\rceil-2$.
Define $\ol=(\lceil\lg n\rceil -2)-\ell$; then
\[\k = \frac{n-2-\ell}{\lceil\lg n\rceil-2}+1 = \frac{(n-2)+\ol}{\lceil\lg n\rceil-2},\]
so that
\begin{align*}
    \frac{n-2}{\k}
    &= \frac{(n-2)(\lceil\lg n\rceil-2)}{(n-2)+\ol}\\
    &= (\lceil\lg n\rceil -2) - \frac{\ol(\lceil\lg n\rceil-2)}{(n-2)+\ol},
\end{align*}
and
\[\lceil\lg n\rceil - 3
\le (\lceil\lg n\rceil -2) - \frac{(\lceil\lg n\rceil -2)^2}{n}
\le (\lceil\lg n\rceil -2) - \frac{\ol(\lceil\lg n\rceil-2)}{(n-2)+\ol}
< \lceil\lg n\rceil -2.
\]
Hence $\d-1 = \lfloor (n-2)/\k\rfloor = \lceil\lg n\rceil -3$.
This yields $2^\d = 2^{\lceil\lg n\rceil-2}\le 2^{\lg n -1} = n/2$.
Therefore, when $n\ge 9$ we have
\begin{align*}
    \k \left\lfloor\frac{n-2}{\k}\right\rfloor
    &= \left(\left\lfloor\frac{n-2}{\lceil\lg n\rceil -2}\right\rfloor +1\right)\Big(\lceil\lg n\rceil -3\Big)\\
    &> \left(\frac{n-2}{\lceil\lg n\rceil -2}\right)\Big(\lceil\lg n\rceil -3\Big)\\
    &\ge (n-4)/2\\
    &\ge 2^\d-2,
\end{align*}
finishing the proof of the upper bound.

For the lower bound, the values for $n\le 2^{10}$ can be checked by computer, so we assume that $n>2^{10}$.
Given $n$, we define $\l=\lceil\lg n\rceil-2\ge 9$, $N=n-2^{\l+1}$, $M=\lfloor (n-2)/\l\rfloor$, and $L=(n-2)\mod\l$. 
Then $n-2=M\l+L$, $0<N\le 2^{\l+1}$ and $0\le L<\l$.
Furthermore, we define $m=\lfloor (n-2)/\l^2\rfloor$, and $\ell=(n-2)\mod\l^2$, so that $n-2=m\l^2+\ell$, with $0\le \ell<\l^2$.
Finally, define $\k'=M-m-1$ and $\d'=\lfloor (n-2)/\k'\rfloor+1$, so that (\ref{e:one}) holds when $k=\k'$ and $d=\d'$.
Thus the lower bound follows by showing that (\ref{e:two}) fails with these values of $k$ and $d$.

We begin by rewriting $\k'$ as
\begin{align*}
    \k'
    &= \frac{2^{\l+1}-2-L+N}{\l} - \frac{2^{\l+1}-2-\ell+N}{\l^2} - 1\\
    &= \frac{\l(2^{\l+1}-2-L+N) - (2^{\l+1}-2-\ell+N) - \l^2}{\l^2}.
\end{align*}
Then we show that $\d'-1=\l+1$ as follows.
For the upper bound we use the fact that $\ell-\l L\ge -(\l-1)^2$, while for the lower bound we use $\ell-\l L\le \l(\l-1)$.
Additionally, we use the following inequality, which can be easily checked to hold for $\l\ge 6$:
\begin{equation}
    \label{e:three}
    \frac{\l^2(2^{\l+1}-2)}{(\l-1)(2^{\l+1}-2)-(\l-1)^2-\l^2} < \l+2.
\end{equation}
Then
\begin{align*}
    \d'-1
    &= \left\lfloor\frac{\l^2(2^{\l+1}-2+N)}{\l(2^{\l+1}-2-L+N) - (2^{\l+1}-2-\ell+N) - \l^2}\right\rfloor\\
    &= \left\lfloor\frac{\l^2(2^{\l+1}-2+N)}{(\l-1)(2^{\l+1}-2+N) + (\ell-\l L) - \l^2}\right\rfloor\\
    &\le \left\lfloor\frac{\l^2(2^{\l+1}-2+N)}{(\l-1)(2^{\l+1}-2+N) - (\l-1)^2 - \l^2}\right\rfloor\\
    &\le \left\lfloor\frac{\l^2(2^{\l+1}-2)}{(\l-1)(2^{\l+1}-2) - (\l-1)^2-\l^2}\right\rfloor\\
    &\le \l+1,
\end{align*}
the last inequality because of (\ref{e:three}).
Also
\begin{align*}
    \d'-1
    &= \left\lfloor\frac{\l^2(2^{\l+1}-2+N)}{(\l-1)(2^{\l+1}-2+N) + (\ell-\l L) - \l^2}\right\rfloor\\
    &\ge \left\lfloor\frac{\l^2(2^{\l+1}-2+N)}{(\l-1)(2^{\l+1}-2+N) - \l}\right\rfloor\\
    &\ge \left\lfloor\frac{\l^2}{(\l-1)}\right\rfloor\\
    &\ge \left\lfloor\frac{\l^2-1}{(\l-1)}\right\rfloor\\
    &= \l+1.
\end{align*}

Now we show that inequality (\ref{e:two}) fails when $k=\k$ and $d=\d$.
To do this, recall from above that $N\le 2^{\l+1}$ and $l-\l L\le \l(\l-1)$.
Also, we note that $2\l^3+\l^2-\l+2<2^{\l+2}$ for $\l\ge 9$.
Then
\begin{align*}
    \k'(\d'-1)
    &= \left(\frac{\l(2^{\l+1}-2-L+N) - (2^{\l+1}-2-\ell+N) - \l^2}{\l^2}\right)(\l+1)\\
    &= \frac{(\l^2-1)(2^{\l+1}-2+N)+(\l+1)(\ell-\l L+\l^2)}{\l^2}\\
    &\le \frac{(\l^2-1)(2^{\l+2}-2)+(\l+1)(\l(\l-1)+\l^2)}{\l^2}\\
    &= 2^{\l+2}-2 - \frac{(2^{\l+2}-2) - (2\l^3+\l^2-\l)}{\l^2}\\
    &= 2^{\l+2}-2 - \frac{2^{\l+2} - (2\l^3+\l^2-\l+2)}{\l^2}\\
    &< 2^{\l+2}-2\\
    &= 2^{\d'}-2.
\end{align*}

This finishes the proof of the lower bound and, hence, the corollary.
\end{proof}

%%%%%%%%%%%%%%%%%%%%%%%%%%%%%%%%
%           COMMENTS
%%%%%%%%%%%%%%%%%%%%%%%%%%%%%%%%
\section{Comments}  
\label{s:comments}

We observe the following simple corollary.

\begin{cor}
\label{c:WideExact}
Suppose that $\Pnk$ is $t$-wide, $d=\diam(\Pnk)$, and $D$ is a distribution of size $t$.
Then
\[\p(\Pnk,D)= \left\{
\begin{array}{ll}
    2t-\min_{v} D(v) & \hbox{if } s(D)=n, \hbox{and}\\
    n+2t-1-s(D)  & \hbox{if } s(D)<n.
\end{array}\right.\]
\end{cor}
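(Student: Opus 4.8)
The plan is to treat the two branches of the formula separately, since each collapses onto a result already established. Throughout I use that $t$-wideness forces $\ptnk = \wtn = 2t+n-2$, which is the only value of $\ptnk$ I will need.

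First I would dispose of the case $s(D)=n$. By Fact \ref{f:WideSupport}, a $t$-wide graph $\Pnk$ with $s(D)=n$ must have diameter $d=1$, i.e.\ $\Pnk = K_n$. Then Theorem \ref{t:piKn} applies verbatim and gives $\p(K_n,D) = 2|D| - \min_v D(v) = 2t - \min_v D(v)$, which is exactly the first branch. Thus this case reduces entirely to recognizing, through Fact \ref{f:WideSupport}, that $t$-wideness together with full support leaves no room for $d\ge 2$.

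Next I would establish the case $s(D)<n$ by matching an upper and a lower bound. For the upper bound, Theorem \ref{t:Dsolve} gives $\p(\Pnk,D)\le \ptnk - s(D)+1$, and substituting $\ptnk = 2t+n-2$ yields $\p(\Pnk,D)\le n+2t-1-s(D)$. For the lower bound I would invoke the configuration $W_D$ constructed earlier (empty on every target, $2t-1$ pebbles on the least-index non-target vertex, and a single pebble on each remaining non-target vertex): it satisfies $\pot(W_D)=t-1<t=|D|$, hence is $D$-unsolvable, and has size $|W_D| = n+2t-2-s(D)$. This witnesses $\p(\Pnk,D)\ge |W_D|+1 = n+2t-1-s(D)$, matching the upper bound and giving equality.

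I do not anticipate a genuine obstacle here: the statement is essentially a bookkeeping consequence of Theorems \ref{t:piKn} and \ref{t:Dsolve} together with the unsolvability of $W_D$ (the equality in the $t$-wide, $s(D)<n$ regime is in fact already flagged inside the proof of Theorem \ref{t:DsolveStrong}). The only points requiring care are the reduction of the full-support case to $K_n$ via Fact \ref{f:WideSupport}, and the routine check that $|W_D|$ is exactly one below the upper bound so that the two bounds coincide.
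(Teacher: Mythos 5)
Your proof is correct and follows essentially the same route as the paper: the $s(D)=n$ branch via Fact \ref{f:WideSupport} and Theorem \ref{t:piKn}, and the $s(D)<n$ branch by matching the upper bound of Theorem \ref{t:Dsolve} with a potential-deficient unsolvable configuration of size $n+2t-2-s(D)$. The only cosmetic difference is that you invoke the explicit configuration $W_D$ for the lower bound where the paper cites Lemma \ref{l:canonical}, but these supply the same witness (indeed the paper's proof of Theorem \ref{t:DsolveStrong} itself uses $W_D$ for exactly this purpose).
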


\begin{proof}
The $s(D)=n$ case follows from Fact \ref{f:WideSupport} and Theorem \ref{t:piKn}, while the $s(D)<n$ case follows from Theorem \ref{t:DsolveStrong} and Lemma \ref{l:canonical}.
\end{proof}

We finish with a few comments, conjectures, and open problems that are driven by this work.

\begin{enumerate}
    \item
    It would be interesting to calculate a precise formula for $\p(\Pnk,D)$ when $\Pnk$ is $t$-long.
    \item 
    Does the above proof yields an efficient algorithm for $D$-solving pebbling configurations of size at least $\p_t(\Pnk)-s(D)+1$?
    \item
    Is there a formula for $\p(T,D)$ for any distribution $D$ on any tree $T$ (akin to the $|D|=1$ case)?
    If so, what is it and how is it constructed?
    \item
    We conjecture that the pebbling number of a chordal graph is always witnessed at a simplicial vertex; that is, if $G$ is chordal then there is some simplicial $r$ such that $\p(G)=\p(G,r).$
    \item
    What is the right generalization of Theorem \ref{t:Simplicial} for a general target $D$ on a chordal graph?
    \item
    Note that $\p(P_{d+1} \Box K_k) = k2^d$ was proved in \cite{Chung}.
    For $n=k(d+1)$, we have $P_{d+1} \Box K_k \subset \Pnk \subset P_d \boxtimes K_k $, so that 
    \[\p_t(P_{d+1} \boxtimes K_k) \le \p_t(\Pnk) \le \p_t(P_{d+1} \Box K_k) \le t\p(P_{d+1} \Box K_k) = tk2^d.\]
    One can view $P_{d+1}\boxtimes K_k$ as a ``blow-up'' of the path $P_d$ by cliques $K_k$: each vertex of $P_d$ is replaced by a clique of size $k$, and vertices from consecutive cliques are adjacent.
    More generally we could blow up the path vertices by different amounts, using the notation $K_{k_0}\ \rotatebox[origin=c]{90}{$\Join$}\ K_{k_2}\ \rotatebox[origin=c]{90}{$\Join$}\ \cdots\ \rotatebox[origin=c]{90}{$\Join$}\ K_{k_d}$, which is what is studied by Sieben in \cite{Sieben}.  
    In this context, for $n=k(d-1)+b+2$ and $G = K_{1}\ \rotatebox[origin=c]{90}{$\Join$}\ K_{k}\ \rotatebox[origin=c]{90}{$\Join$}\ \cdots\ \rotatebox[origin=c]{90}{$\Join$}\ K_{k}\ \rotatebox[origin=c]{90}{$\Join$}\ K_{b+1}$, we have $\Pnk\subset G$, so that $\p(\Pnk) = \p(G)$, since the arguments of Lemma \ref{l:down_simpl} work on $G$ as well.
    \item
    Might some of the methods developed here be useful in lowering the upper bound of $k(d)\le 2^{2d+3}$ from \cite{CHKT} mentioned in the Introduction?
\end{enumerate}

%%%%%%%%%%%%%%%%%%%%%%%%%%%%%%%%%%%%%%%%%%
%           BIBLIO
%%%%%%%%%%%%%%%%%%%%%%%%%%%%%%%%%%%%%%%%%%%%%%%

\end{document}